\begin{document}

\title{Rigid ideals by deforming quadratic letterplace ideals}
\author{Gunnar Fl{\o}ystad}
\address{Universitet i Bergen, Matematisk Institutt, Postboks 7803, 5020, Bergen, Norway}
\email{gunnar@mi.uib.no}
\author{Amin Nematbakhsh}
\address{Institute for Research in Fundamental Sciences (IPM), Tehran, Iran}
\email{nematbakhsh@ipm.ir}
\date{\today}

\begin{abstract} 
We compute the deformation space of quadratic letterplace ideals $L(2,P)$ 
of finite 
posets $P$ when its Hasse diagram is a rooted tree. These deformations 
are unobstructed. 
The deformed family has a polynomial ring as the base ring.
The ideal $J(2,P)$ defining the full family of deformations is a rigid ideal
and we compute it explicitly.
In simple example cases $J(2,P)$ is the ideal of maximal minors of a
generic matrix, the Pfaffians of a skew-symmetric matrix, and a ladder
determinantal ideal.
\end{abstract}

\maketitle

\newtheorem{theorem}{Theorem}[section]
\newtheorem{proposition}[theorem]{Proposition}
\newtheorem*{proposition2}{Proposition}
\newtheorem{lemma}[theorem]{Lemma}
\newtheorem{corollary}[theorem]{Corollary}
\newtheorem{claim}[theorem]{Claim}
\newtheorem{conjecture}[theorem]{Conjecture}
\theoremstyle{definition}
\newtheorem{definition}[theorem]{Definition}
\newtheorem{remark}[theorem]{Remark}
\newtheorem{remarks}[theorem]{Remarks}
\newtheorem{terminology}[theorem]{Terminology}
\newtheorem{notation}[theorem]{Notation}
\newtheorem{notations}[theorem]{Notations}
\newtheorem{question}[theorem]{Question}
\newtheorem{questions}[theorem]{Questions}
\newtheorem{problem}[theorem]{Problem}
\newtheorem{problems}[theorem]{Problems}
\newtheorem{example}[theorem]{Example}
\newtheorem{examples}[theorem]{Examples}
\newtheorem{cexample}[theorem]{Counter example}
\newtheorem{cexamples}[theorem]{Counter examples}
\newtheorem{summary}[theorem]{Summary}
\newtheorem{construction}[theorem]{Construction}
\newtheorem{situation}[theorem]{Situation}
\newtheorem{fact}[theorem]{Fact}
\newtheorem{facts}[theorem]{Facts}
\newtheorem{exercise}[theorem]{Exercise}
\newtheorem{convention}[theorem]{Convention}
\newtheorem{algorithm}[theorem]{Algorithm}


\newcommand{\mfa}{\mathfrak{a}}
\newcommand{\mfm}{\mathfrak{m}}
\newcommand{\bfa}{\mathbf{a}}
\newcommand{\bfb}{\mathbf b}
\newcommand{\bfd}{\mathbf d}
\newcommand{\bfk}{\mathbf{k}}
\newcommand{\bft}{\mathbf{t}}
\newcommand{\bfx}{\mathbf x}
\newcommand{\bfu}{\mathbf u}
\newcommand{\ovbft}{\overline{\mathbf{t}}}
\newcommand{\ovbfx}{\overline{\mathbf x}}
\newcommand{\ovbfu}{\overline{\mathbf u}}
\newcommand{\mcF}{\mathcal{F}}
\newcommand{\mcI}{\mathcal{I}}
\newcommand{\mcN}{\mathcal{N}}
\newcommand{\mcO}{\mathcal{O}}
\newcommand{\al}{\alpha}
\newcommand{\be}{\beta}
\newcommand{\ga}{\gamma}

\newcommand{\kk}{\kr}
\newcommand{\ZZ}{\mathbb{Z}}
\newcommand{\NN}{\mathbb{N}}
\newcommand{\RR}{\mathbb{R}}
\newcommand{\CC}{\mathbb{C}}

\newcommand{\polynomialRing}{\kk[x_1,\ldots,x_n]}
\newcommand{\projectivespace}{\mathbf{P}^n_\kk}
\newcommand{\hilbertscheme}{\mathbf{H}}
\newcommand{\exactsequence}[3]{0\to #1 \to #2 \to #3 \to 0}
\newcommand{\colRed}[1]{\textcolor{red}{#1}}


\newcommand{\id}{\operatorname{id}}
\newcommand{\im}{\operatorname{Im}}
\newcommand{\coker}{\operatorname{Coker}}
\renewcommand{\ker}{\operatorname{Ker}}

\newcommand{\ob}{\operatorname{Ob}}
\newcommand{\mor}{\operatorname{Mor}}

\newcommand{\height}{\operatorname{height}}
\newcommand{\length}{\operatorname{length}}
\newcommand{\spec}{\operatorname{Spec}}
\newcommand{\proj}{\operatorname{Proj}}
\newcommand{\Proj}{\mathbf{Proj}}
\newcommand{\V}{\operatorname{V}}
\newcommand{\supp}{\operatorname{Supp}}
\newcommand{\ann}{\operatorname{ann}}
\newcommand{\ass}{\operatorname{Ass}}
\newcommand{\codim}{\operatorname{codim}}
\newcommand{\transcendentaldegree}{\operatorname{tr.deg}}
\newcommand{\End}{\operatorname{End}}
\newcommand{\rank}{\operatorname{rank}}
\newcommand{\depth}{\operatorname{depth}}
\newcommand{\syz}{\operatorname{syz}}
\newcommand{\Mon}{\operatorname{Mon}}
\newcommand{\lcm}{\operatorname{lcm}}
\newcommand{\Min}{\operatorname{Min}}

\newcommand{\Hom}{\operatorname{Hom}}
\newcommand{\ext}{\operatorname{Ext}}
\newcommand{\Tor}{\operatorname{Tor}}
\newcommand{\aut}{\operatorname{Aut}}
\renewcommand{\H}{\operatorname{H}}
\newcommand{\R}{\operatorname{R}}
\newcommand{\colim}{\operatorname{colim}}
\newcommand{\pd}{\operatorname{proj.dim}}

\renewcommand{\L}{\operatorname{L}}
\newcommand{\sheafkernel}{\mathfrak{Ker}}
\newcommand{\presheafcokernel}{\mathfrak{PCoker}}
\newcommand{\sheafcokernel}{\mathfrak{SCoker}}
\newcommand{\presheafimage}{\mathfrak{PIm}}
\newcommand{\sheafimage}{\mathfrak{SIm}}
\newcommand{\sheafhom}{\mathcal{H}\mathfrak{om}}
\newcommand{\sheafext}{\mathcal{E}\mathfrak{xt}}

\newcommand{\D}{\operatorname{D}}
\newcommand{\picard}{\operatorname{Pic}}
\newcommand{\divisor}{\operatorname{Div}}
\newcommand{\cl}{\operatorname{Cl}}
\newcommand{\germ}{\operatorname{germ}}
\newcommand{\K}{\operatorname{K}}
\newcommand{\Sec}{\operatorname{Sec}}
\newcommand{\Tan}{\operatorname{Tan}}
\newcommand{\Z}{\operatorname{Z}}

\newcommand{\sing}{\operatorname{sing}}

\newcommand{\pw}{\mathbb P(W)}
\newcommand{\tatef}{\mathbf T^\bullet(\sheaf F)}
\newcommand{\tatepf}{\mathbf T^p(\sheaf F)}
\newcommand{\Fc}{\sheaf F}
\newcommand{\Sum}{\operatorname{Sum}}
\newcommand{\shifts}{\operatorname{Shifts}}
\newcommand{\mdeg}{\operatorname{mdeg}}
\newcommand{\Ex}{\operatorname{Ex}}
\newcommand{\Der}{\operatorname{Der}}
\newcommand{\In}{\operatorname{in}}

\newcommand{\kr}{\Bbbk}
\newcommand{\rarr}{\rightarrow}
\newcommand{\larr}{\leftarrow}
\newcommand{\ini}{\text{in}}
\newcommand{\te}{\otimes}
\newcommand{\iso}{\cong}
\newcommand{\Spec}{\text{Spec}\,}
\newcommand{\mto}[1]{\stackrel{#1}\longrightarrow}
\newcommand{\Yh}{\hat{Y}}
\newcommand{\Uh}{\hat{U}}
\newcommand{\Ih}{\hat{I}}

\newcommand{\gO}{{\mathcal O}}
\newcommand{\gA}{{\mathcal A}}

\newcommand{\Hilb}{\text{Hilb}}
\newcommand{\sus}{\subseteq}
\newcommand{\larrow}{\longrightarrow}
\newcommand{\comment}[1]{{\color{blue} \sf ($\clubsuit$ #1 $\clubsuit$)}}
\definecolor{awesome}{rgb}{1.0, 0.13, 0.32}
	\definecolor{darkspringgreen}{rgb}{0.09, 0.45, 0.27}
	\definecolor{darkpastelgreen}{rgb}{0.01, 0.75, 0.24}
\newcommand{\amincomment}[1]{{\color{darkspringgreen} #1}}

\newcommand{\ap}{{a^\prime}}
\newcommand{\sib}{\text{sib}}
\newcommand{\rot}{\rho}
\newcommand{\ztop}{{\mathbb Z} ([2] \times P)}
\newcommand{\bi}{{\mathbf i}}
\newcommand{\bk}{{\mathbf k}}

\newcommand{\kalg}{\underline{\kr-\text{Alg}}}
\newcommand{\kvect}{\underline{\kr-\text{vect}}}
\newcommand{\kartalg}{\underline{\kr-\text{Art}}}
\newcommand{\sets}{\underline{\text{Set}}}
\newcommand{\kV}{\kr[V^*]/(V^*)^2}
\newcommand{\Def}[1]{\text{Def}_{#1}}
\newcommand{\ov}[1]{\overline{#1}}
\newcommand{\empt}{\emptyset}
\newcommand{\cA}{{\mathcal A}}
\newcommand{\tJ}{\tilde{J}}
\newcommand{\mm}{{\mathfrak m}}
\newcommand{\hY}{{\hat Y}}
\newcommand{\hbeta}{\hat{\beta}}
\newcommand{\oY}{\overline{Y}}
\newcommand{\oJ}{\overline{J}}

\section{Introduction}
Monomial ideal theory has much developed into a branch of its own.
But before that one studied polynomial ideals in general. Monomial  ideals came
about since they are specializations, typically initial ideals,
of such ideals. One should then 
ask: Can monomial ideal theory give something back? 
Can one {\it start} with monomial ideals 
and derive interesting classes of polynomial ideals in general? 
Yes one can, and here we do this for a
reasonably large class of monomial ideals.
We get a full understanding of the polynomial ideals
which specialize to the monomial ideals we start out from. 

\medskip
\noindent{\it The ideals we work with.} More precisely we
consider quadratically generated letterplace ideals $L(2,P)$
associated to a finite poset $P$.
These are precisely the edge ideals of Cohen-Macaulay bipartite graphs. 
Its generators are the monomials $x_{1,p}x_{2,q}$ where $p \leq q$ 
in the poset $P$. That edge ideals of Cohen-Macaulay bipartite graphs
have this form, is an astonishing discovery of J.Herzog and T.Hibi
\cite{HeHiCMbi}. This class of ideals were generalized in
\cite{EHM} and further studied and generalized in \cite{FGH}
were they were called letterplace ideals, see Section \ref{sec:LP}.

\medskip
\noindent{\it Results.} 
When the Hasse diagram of $P$ has the form of a rooted tree we
get a complete algebraic understanding of all ideals which are
deformations of the quadratic letterplace ideals $L(2,P)$. This is all the more
unusual and suprising for the following reason: Monomial ideals
are degenerations of polynomial ideals. Thus whenever a monomial ideal is on
a Hilbert scheme, it tends to be a singular point on the Hilbert
scheme. Its infinitesimal deformations are then obstructed and
it is a very hard and messy task to compute the space of all deformations.

However for the letterplace ideals $L(2,P)$ we consider, 
it turns out that every 
nice thing one could wish for, actually happens:
\begin{itemize}
\item The ideals $L(2,P)$ are
{\it unobstructed}, i.e. every infinitesimal deformations lifts. 
In  particular whenever this ideal is on a Hilbert scheme,
it is a smooth point.
\item The full deformation space which a priori is defined only
over a complete local ring, acutally lifts to a deformation over 
a {\it polynomial ring}. This follows from our computation of the
first cotangent cohomology of $L(2,P)$, Corollary \ref{cor:CotangRP},
and the explicit family we give in Section \ref{sec:Family}.
\item The full family of deformations over this polynomial ring 
is defined by a {\it rigid} ideal $J(2,P)$, Corollary \ref{cor:Rigid}. 
So deformations of $L(2,P)$ 
come from a coordinate change in $J(2,P)$.
\item We {\it explicitly} compute the ideal $J(2,P)$ by a simple recursive
procedure, see Section \ref{sec:Family} and Equation \ref{def:FamilyST}.
\end{itemize}

\noindent{\it Deforming Borel-fixed ideals.} 
As said monomial ideals are usually obstructed. 
Any ideal can be specialized to a Borel-fixed monomial ideal, which in
characteristic zero is the same as a strongly stable ideal.
One could then envision a path to classify ideals by deforming strongly
stable ideals. Unfortunately this is rather hopeless since strongly
stable ideals typically are very obstructed. (A notable exception
to this is the lexsegment ideal. When the polynomial
ring is given the standard grading the lexsegment ideal is a smooth point
on the Hilbert scheme \cite{Reev}, thus giving a distinguished
component of the Hilbert scheme for every Hilbert function $h : \ZZ \rarr \NN$.)

\medskip
\noindent{\it Deforming Stanley-Reisner ideals.}
Deformation theory applied to Stanley-Reisner
ideals has been developed by K.Altman and J.Christophersen. 
In \cite{AltCh-Cotang} they give the basic deformation theory 
for Stanley-Reisner rings. In \cite{AltCh-DefoSR} and \cite{Ch}
they consider triangulations of spheres, which deform to Calabi-Yau manifolds, 
and triangulations of tori, which deform to Abelian varieties. For classes of
triangulations they compute the versal deformation space (base space) of the 
(infinitesimal) deformation functor. This space is typically not
smooth, i.e. typically not a power series ring, 
but they give equations for the relations of this space, and give
a detailed description of it.
Here, for quadratic letterplace ideals,  we find that the base space both is
smooth and global and that we can give explicit equations for the
whole family of deformations. 
Recently \cite{ABHL} applied the theory developed by Christophersen and
Altman to investigate when monomial ideals are rigid. For edge ideals
they develop a number of results for when this holds. They also classify
the (few) letterplace ideals which are rigid.

\medskip
\noindent{\it Rigid ideals.} 
The notion of a rigid ideal occurs in (infinitesimal) deformation
theory. Although it is a well-known notion, we have not been able
to find many examples of rigid ideals in the literature. 
Classically determinantal ideals
of generic matrices are known to be rigid, \cite{BrVe}. Recently 
\cite{ChIl} shows that the coordinate rings of Grassmannians for the
Pl\"ucker embedding are rigid ideals. As mentioned above \cite{ABHL}
also gives classes of rigid monomial ideals. With the present article
we therefore believe we make a substantial contribution to the known
classes of rigid ideals.

\medskip
\noindent{\it Multigraded Hilbert schemes.}
While rigidity is an infinitesimal notion, one obtains global families
of deformations, the (multigraded) Hilbert schemes, when one endows the ambient
polynomial ring with a grading by an abelian group $A$ \cite{HaSt}. 
The $A$-graded
infinitesimal deformations are a subset of all infinitesimal deformations.
For global families there is then a situation close to rigidity. An ideal $I \sus \kr[X]$ 
may not be rigid, but there nevertheless is a rigid ideal $J$ in a larger
polynomial ring $\kr[Y]$ such that any deformation of $I$ comes from
a coordinate change in $J$ and then restricting to $\kr[X]$. 
J.Kleppe \cite{Kl} shows that this is the case when $I$ is a determinantal ideal
associated to a matrix of linear forms, but where there may be
dependencies between the linear entries. We show that the same
phenomenon happens here when we consider the letterplace ideals $I = L(2,P)$
and their $A$-graded deformations, Theorem \ref{thm:HilbSmooth} and the
applications after it.

\medskip
\noindent{\it Organization of the paper.} In Section 2 we recall 
letterplace ideals as defined in \cite{FGH}. In this article we are concerned
with the quadratic letterplace ideals $L(2,P)$ where the Hasse diagram of $P$ is
a rooted tree. In Section 3 we give
an explicit recursive procedure for computing the family $J(2,P)$ 
of deformations
of $L(2,P)$. Section 4 contains examples of these deformed families
for posets $P$ of cardinality $3$ and $4$, and also for two other 
simple classes of posets. In the first cases we get the ideal
of two-minors of a $2 \times 5$-matrix and the ideal of Pfaffians of
a skew-symmetric $5 \times 5$-matrix. 
Section 5 shows that the family of ideals $J(2,P)$ is very finely graded,
by a free abelian group of cardinality $2|P|$.
Section 6 investigates the deformation
theory of the ideals $L(2,P)$ and we compute the non-trivial first
order deformations of $L(2,P)$ for any finite poset $P$.
These are given by the first cotangent cohomology group.
This module turns out to have an extremely nice set
of generators. For each generator there is a single monomial $x_{1,p}x_{2,p}$
mapping to another monomial while all other monomials map to zero.
Section 7 shows flatness of $J(2,P)$ over the base polynomial ring.
In Section 8 we show rigidity of $J(2,P)$ (an infinitesimal notion).
Section 9 considers global families of deformations.
We show that the letterplace ideals $L(2,P)$ are smooth points on the
Hilbert schemes and that the general point on the Hilbert scheme comes
from the ideal $J(2,P)$ by a coordinate change and then restricting.
Sections 8 and 9 are developed in a general setting, and the results
concerning $L(2,P)$ are just particular instances of general results.
In the end we give Conjecture \ref{con:HilbCon}, that the results
of this article holds for any finite poset $P$ and not just
for posets whose Hasse diagram is a rooted tree.

\medskip
\noindent{\it Acknowledgement.} We thank Jan Christophersen for
useful discussions which significantly influenced the form of this paper.

\section{Letterplace ideals of posets}
\label{sec:LP}

Let $\kr$ be a field.
If $R$ is a set, denote by $\kr[x_R]$ the polynomial ring 
$\kr[x_i]_{i \in R}$.
For a natural number $n$ let the chain poset be
$[n] = \{1 < 2 < \cdots < n \}$, so $[2] = \{1,2\}$. 

Given a finite poset $P$. We shall in this paper be concerned with 
the monomial ideal $L(2,P)$ in $\kr[x_{[2]\times P]}]$ 
generated by quadratic monomials $x_{1,p}x_{2,q}$ where $p \leq q$. 
These ideals are by \cite{HeHiCMbi} precisely the edge ideals of
Cohen-Macaulay bipartite graphs, see Section 9 of \cite{HeHiMon} for more
on this.
The ideals $L(2,P)$ are a special case of 
{\it letterplace ideals} $L(n,P)$ introduced in \cite{EHM} and 
\cite{FGH}, generated by monomials
\[ x_{1,p_1} x_{2,p_2}, \cdots, x_{n,p_n} \]
where $p_1 \leq \cdots \leq p_n$ are weakly increasing chains in $P$.
By  \cite[Corollary 2.5]{EHM}, see also 
\cite[Corollary 2.4]{FGH}, $L(n,P)$ is a Cohen-Macaulay ideal
of codimension equal to the cardinality $|P|$.
The multiplicity of $L(2,P)$ is the cardinality of the distributive
lattice of poset ideals of $P$, see Section 2 of \cite{DFN-LP}.  
This is the same as the degree of the algebraic subscheme of 
the affine space ${\mathbb A}^{2|P|}$ defined by $L(2,P)$.

For ease of notation
we shall rather write a variable $x_{i,p}$ as $p_i$. Thus
$L(2,P)$ is generated by quadratic monomials $p_1q_2$ where $p \leq q$.

\medskip
For more on letterplace ideals $L(n,P)$ and their Alexander duals 
$L(P,n)$ and the
omnipresence of these monomial ideals, see \cite{FGH}.
In \cite{DFN-LP} we compute the Betti tables of the letterplace ideals
$L(n,P)$, in particular of $L(2,P)$.

\section{The family of deformations}

\label{sec:Family}

We here describe the main object of study in this article: the ideals 
$J(2,P)$ which are deformations of the letterplace
ideals $L(2,P)$. The generators of $L(2,P)$ are monomials $p_1 q_2$
where $p \leq q$. We shall deform each such generator, and the
ideal $J(2,P)$ will be the ideal generated by these deformations.
We do this for the situation that the Hasse diagram of the poset $P$ is a 
rooted tree. Except for Section 5, this is our assumption throughout the paper.

The root of $P$ is at the bottom. If an element $b$ covers $a$ we
say that $a$ is a parent of $b$ and we write $a \prec b$. Two elements 
$b$ and $c$ are called siblings if they have the same parent.

For each pair $q,p$ where the meet of $q$ and $p$ is the parent of $p$,
we introduce a variable $u_{q,p}$.
Let $b$ and $c$ be distinct siblings. 
We define
\[ T_c(b) = - \sum_{q \geq c} q_2 u_{q,b}. \]
If $a$ is a parent of $b$ we let 
\[ T_a(b) = - a_2 u_{a,b}. \]
We also define
\[ T(b) = T_b(b) = - T_a(b) -\sum_{c} T_c(b), \]
where we sum over siblings $c$ of $b$, distinct from $b$.
If $\rot$ is the root of $P$ we define 
\[T(\rot)= u_{\emptyset,\rot}. \]

The rationale for introducing these variables will become clear
in Subsection \ref{subsec:DecoLP} where we compute
the cotangent cohomology of the ring $\kr[x_{[2] \times P}]/L(2,P)$,
Corollary \ref{cor:CotangRP}.
Let $B$ be the polynomial ring in these variables
\[ B = \kr[u_{\emptyset, \rot}, u_{q,p}] \]
ranging over all pairs $(q,p)$ such that the meet of $q$ and $p$ 
is the parent of $p$. This will be the base ring for our family of 
deformations. Let $B(2,P)$ 
be the ring $B \te_\kr \kr[x_{[2] \times P}]$. 
This is the ring where the ideal of the full deformation family lives.

\medskip
For an element $p$ in the poset $P$ we define the {\it depth} of $p$,
$\depth(p)$
to be the length of the longest chain upwards, starting from $p$.
Thus if $p$ is a maximal element, then $\depth(p) = 0$. 

We now define the following. 
\begin{itemize}
\item If $a \prec b$ so $a$ is the parent of $b$, we shall define determinants $D(a)^b$ lying in $B(2,P)$,
as well as determinants $D(a)^a$.  
\item If $a \leq b$ we shall define polynomials $S_a(b_2)$ lying in $B(2,P)$.
Then $S_a$ extends uniquely to a linear map on linear combinations of elements
$b_2$ where $b \geq a$. For short we shall often write $S_a(b)$ for 
$S_a(b_2)$.
\end{itemize}
We shall do this inductively on $\depth(a)$. 

Given these definitions, we also define the following:
\begin{itemize}
\item If $b$ and $c$ are distinct siblings let
\[S_cT_c(b) = S_c(T_c(b)). \] This definition will appear
by induction on $\depth(c)$ as we define $S_c$.
\end{itemize}

We also define the following:
\begin{itemize}
\item $S_bT_b(b) = b_1$.
\item If $a \prec b$ so $a$ is the parent of $b$,  let $S_aT_a(b) = -u_{a,b}$.
\end{itemize}
Note that these last two definitions are {\it not} compositions,
i.e. $S_bT_b(b)$ is not $S_b(T_b(b))$. Rather we 
think of these definitions as symbolic expressions.

\medskip
Now let us start the inductive definitions.
If $a$ is maximal, that is $\depth(a) = 0$ we define $D_a(a) = 1$
and $S_a(a) = 1$. 

Otherwise let $b^1, \ldots, b^m$ be the children of $a$. For
uniformity we denote $b^0 = a$. We form the $m \times (m+1)$ matrix
$M(a) = [S_{b^i}T_{b^i}(b^j)]$ where the column index $i = 0, \ldots ,m$ and 
the row index $j = 1, \ldots, m$. 
\begin{align*} & \left [ \begin{matrix} S_aT_a(b^1) & S_{b^1}T_{b^1}(b^1) & 
\cdots & S_{b^m}T_{b^m}(b^1) \\
S_aT_a(b^2) & S_{b^1}T_{b^1}(b^2) & 
\cdots & S_{b^m}T_{b^m}(b^2) \\
\vdots & \vdots &  & \vdots \\
S_aT_a(b^m) & S_{b^1}T_{b^1}(b^m) & 
\cdots & S_{b^m}T_{b^m}(b^m) 
\end{matrix} \right ] \\
 = &   \left [ \begin{matrix} -u_{a,b^1} & b^1_1 & 
\cdots & S_{b^m}T_{b^m}(b^1) \\
-u_{a,b^2} & S_{b^1}T_{b^1}(b^2) & 
\cdots & S_{b^m}T_{b^m}(b^2) \\
\vdots & \vdots &  & \vdots \\
-u_{a,b^m} & S_{b^1}T_{b^1}(b^m) & 
\cdots & b^m_1 
\end{matrix} \right ]
\end{align*}

Let $M(a)^{b^i}$ be the matrix obtained by deleting column $i$. 
Define the signed determinant
\[ D(a)^i = D(a)^{b^i} = (-1)^i |M(a)^{b^i}|.\]
Note that in order to define this, we need to have defined 
$S_p$ for all $p$ strictly bigger than $a$.

For $a \leq b$ define $R(a,b) = 1$ if $a = b$ and if $a < b$ define 
\[ R(a,b) = \prod_{a \leq p \prec q \leq b} D(p)^q \]
where the product is over all covering relations $p \prec q$ between
$a$ and $b$.

Now define 
\[ S_a(b_2) = R(a,b)D(b)^b. \]
As said before this definition extends in a natural way to linear 
combinations of variables $b_2$ such that $b\geq a$. Also 
we shall often for short write $S_a(b)$ for $S_a(b_2)$.
This completes the inductive definitions. Now we give the ideal
defining the full family of deformations of $L(2,P)$.

\medskip
\begin{definition} \label{def:FamilyST}
Let $J(2,P)$ be the ideal in $B(2,P)$ generated by 
\[ p_1 q_2 - T(p)S_p(q) \]
for all $p \leq q$.
\end{definition}

The following shows the various entities defined, in the simplest
situation.
It is useful in the next section where we give examples
of the ideals $J(2,P)$.

\begin{lemma} \label{lem:FamSingle}
Suppose $a \in P$ has a single child $b$. Then:
\begin{itemize}
\item[a.] $S_a(a) = b_1$,
\item[b.] $D(a)^a = b_1$,
\item[c.] $D(a)^b = u_{a,b}$,
\item[d.] $T(b) = a_2u_{a,b}$.
\end{itemize}
\end{lemma}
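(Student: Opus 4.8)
The plan is to specialize the recursive definitions of Section~\ref{sec:Family} to the case where $a$ has exactly one child, i.e.\ $m = 1$; once this is done all four formulas drop out by direct computation. Write $b^0 = a$ and $b^1 = b$, so that $M(a)$ is the $1 \times 2$ matrix $[\,S_aT_a(b)\quad S_bT_b(b)\,]$. The symbolic rules give $S_aT_a(b) = -u_{a,b}$, since $a \prec b$, and $S_bT_b(b) = b_1$; hence $M(a) = [\,-u_{a,b}\quad b_1\,]$. I would emphasize that these two entries are prescribed by the symbolic conventions and, in particular, do not depend on whether $b$ is a maximal element of $P$, so the computation is completely uniform.

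From this, parts (b) and (c) are immediate: deleting column $0$ leaves $M(a)^a = [\,b_1\,]$, so $D(a)^a = (-1)^0 |M(a)^a| = b_1$; and deleting column $1$ leaves $M(a)^b = [\,-u_{a,b}\,]$, so $D(a)^b = (-1)^1 |M(a)^b| = u_{a,b}$. For (a) I would use the definition $S_a(a_2) = R(a,a)\, D(a)^a$ together with $R(a,a) = 1$ and part (b), which gives $S_a(a) = b_1$. For (d) I note that since $b$ is the only child of $a$ it has no sibling distinct from itself, so the sum over siblings in $T(b) = T_b(b) = -T_a(b) - \sum_c T_c(b)$ is empty; combining this with $T_a(b) = -a_2 u_{a,b}$, valid because $a$ is the parent of $b$, yields $T(b) = -T_a(b) = a_2 u_{a,b}$.

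There is no substantial obstacle here; the statement is purely a matter of bookkeeping with the definitions. The one point worth flagging explicitly in the write-up is that $D(a)^a$ and $D(a)^b$ are determinants of matrices whose entries are the \emph{symbolic} expressions $S_aT_a(b)$ and $S_bT_b(b)$ rather than genuine compositions of maps, so none of the four identities requires any information about the portion of the poset lying strictly above $b$, and in particular the lemma applies whether or not $b$ is a leaf.
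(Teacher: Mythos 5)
Your proof is correct and follows the same approach as the paper: compute $M(a) = \begin{bmatrix} -u_{a,b} & b_1 \end{bmatrix}$ from the symbolic conventions, then read off (a)–(d) directly from the definitions. The paper's proof simply states this matrix and leaves the remaining bookkeeping implicit; your write-up fills in those steps explicitly, including the useful observation that the entries $S_aT_a(b)$ and $S_bT_b(b)$ are symbolic and hence independent of the part of $P$ above $b$.
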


\begin{proof}
In this case the matrix $M(a) = \begin{bmatrix}
-u_{a,b} & b_1
\end{bmatrix}$.
\end{proof}

We shall in the Section \ref{sec:Flat} show that the ring $B(2,P)/J(2,P)$ 
is a deformation of 
the letterplace ring $\kr[2,P]/L(2,P)$, flat over the base ring $B$.

\section{Examples}

We consider here four examples of posets 
and give the deformed equations explicitly.
We also identify the variety they define.

\subsection{Determinantal variety}
Let $P$ be the totally ordered poset $[n] = \{ 1 < 2 < \cdots < n\}$.
For simplicity we assume $n= 4$ and write $P = \{ a < b < c < d\}$.

The deformations of  monomials $p_1 p_2$ for $p \in P$ are
\[ p_1 p_2 - T(p) S_p(p). \]
Since in this case $p$ has one or none child, and also no sibling,
we apply Lemma \ref{lem:FamSingle} and these deformations are:
\begin{align*}
& a_1a_2 - u_{\empt,a} b_1 \\
& b_1b_2 - a_2u_{a,b}c_1 \\
& c_1c_2 - b_2u_{b,c}d_1 \\
& d_1d_2 - c_2u_{c,d}
\end{align*}

Furthermore we have the deformed polynomials:
\begin{align*}
& a_1b_2 - u_{\empt,a}u_{a,b} c_1 & & a_1 c_2 - u_{\empt,a}u_{a,b}u_{b,c}d_1 & 
& a_1 d_2 - u_{\empt,a}u_{a,b}u_{b,c}u_{c,d} & \\
& b_1c_2 - a_2u_{a,b}u_{b,c} d_1 & & b_1 d_2 - a_2u_{a,b}u_{b,c}u_{c,d} & 
& & \\
& c_1d_2 - b_2u_{b,c}u_{c,d}. & & & 
& & 
\end{align*}

These binomials are the $2$-minors of the following $2 \times 5$ matrix
\[ \left [ 
\begin{matrix} a_1 & b_1 & u_{a,b}c_1 & u_{a,b}u_{b,c}d_1 & 
u_{a,b}u_{b,c}u_{c,d} \\
u_{\empt,a} & a_2 & b_2 & c_2 & d_2 
\end{matrix} \right ],  \]
after we localize by inverting $u_{a,b}$ and $u_{b,c}$.
This matrix can also be written as: 
\[  \left [ 
\begin{matrix} a_1 & S_a(a) & S_a(b) & S_a(c) & 
S_a(d) \\
u_{\empt,a} & a_2 & b_2 & c_2 & d_2 
\end{matrix} \right ]. \]

\subsection{The Pfaffians of $5 \times 5$ skew-symmetric matrices}

Consider the star poset $P$ 
\begin{center}
\begin{tikzpicture}[scale=1, vertices/.style={draw,fill=black, circle, inner 
sep=1.5pt}]
\node [vertices, label=below:{$a$}] (0) at (0,0){};
\node [vertices, label=above:{$b^1$}] (1) at (-1,1.33){};
\node [vertices, label=above:{$b^2$}] (2) at (-.5,1.33){};
\node [vertices, label=above:{$b^m$}] (3) at (+1,1.33){};
\node [vertices, white] (4) at (-.3,1.33){};
\node [vertices, white] (5) at (.8,1.33){};
\foreach \to/\from in {0/1, 0/2, 0/3}
\draw [-,thick] (\to)--(\from);
\foreach \to/\from in {4/5}
\draw [dotted,ultra thick] (\to) -- (\from);
\end{tikzpicture}
\end{center}

We will show the deformations
when the root has two or three children.
First suppose we have two children so the poset is


\begin{center}
\begin{tikzpicture}[scale=1, vertices/.style={draw,fill=black, circle, inner 
sep=1.5pt}]
\node [vertices, label=below:{$a$}] (0) at (0,0){};
\node [vertices, label=above:{$b$}] (1) at (-.75+0,1.33){};
\node [vertices, label=above:{$c$}] (2) at (-.75+1.5,1.33){};
\foreach \to/\from in {0/1, 0/2}
\draw [-,thick] (\to)--(\from);
\end{tikzpicture}
\end{center}

The matrix $M(a)$ is:
\[ \left [ 
\begin{matrix}
S_aT_a(b) & S_bT_b(b) & S_cT_c(b) \\
S_aT_a(c ) & S_bT_b(c) & S_cT_c(c) 
\end{matrix}  \right ]
= \left [
\begin{matrix}
-u_{a,b} & b_1 & -u_{c,b} \\
-u_{a,c} & -u_{b,c} & c_1
\end{matrix}
\right ].
\]
There are five generating monomials in $L(2,P)$ and the deformed
polynomials are:
\begin{align*}
& b_1 b_2 + T(b)\cdot 1 & \\
& c_1 c_2 + T(c) \cdot 1 & \\
& a_1b_2 + T(a)S_a(b) & \\
& a_1c_2 + T(a)S_a(c) & \\
& a_1a_2 + T(a)S_a(a)
\end{align*}
which are:
\begin{align*}
& b_1 b_2 - a_2u_{a,b} - c_2u_{c,b} & \\
& c_1 c_2 - a_2u_{a,c} - b_2u_{b,c} & \\
& a_1b_2 - u_{\empt,a}u_{a,c}u_{c,b} - u_{\empt,a}u_{a,b}c_1 & \\
& a_1c_2 - u_{\empt,a}u_{a,b}u_{b,c} - u_{\empt,a}u_{a,c}b_1 & \\
& a_1a_2 - u_{\empt,a}b_1c_1 + u_{\empt,a}u_{c,b}u_{b,c}
\end{align*}
These are (after diving by $u_{\empt,a}$, 
the Pfaffians of the following skew-symmetric matrix:
\[ \left [ \begin{matrix} 
0 & u_{b,c} & c_2 & b_1 & a_1 \\
-u_{b,c} & 0 & u_{a,c} & u_{\empt,a}^{-1}a_1 & c_1 \\
-c_2 & -u_{a,c} & 0 & u_{a,b} & b_2 \\
-b_1 & -u_{\empt,a}^{-1}a_1 & -u_{a,b} & 0 & u_{c,b} \\
-a_2 & -c_1 & -b_2 & -u_{c,b} & 0
\end{matrix} \right ].
\]
Setting $u_{\empt, a} = 1$, they are also the Pl\"ucker relations
defining the Grassmann variety $G(2,5)$ embedded in projective
space ${\mathbb P}^9$. 

\medskip
Now let us consider the case of the star poset $P$ with three childen:
\begin{center}
\begin{tikzpicture}[scale=1, vertices/.style={draw,fill=black, circle, inner 
sep=1.5pt}]
\node [vertices, label=below:{$a$}] (0) at (0,0){};
\node [vertices, label=above:{$b$}] (1) at (-1,1.33){};
\node [vertices, label=above:{$c$}] (2) at (0,1.33){};
\node [vertices, label=above:{$d$}] (3) at (+1,1.33){};
\foreach \to/\from in {0/1, 0/2, 0/3}
\draw [-,thick] (\to)--(\from);
\end{tikzpicture}
\end{center}
The matrix $M(a)$ is:
\[ \left [ \begin{matrix}
-u_{a,b} & b_1 & -u_{c,b} & -u_{d,b} \\
-u_{a,c} & -u_{b,c} & c_1 & -u_{d,c} \\
-u_{a,d} & -u_{b,d} & -u_{c,d} & d_1
\end{matrix} \right ], \]
and the $D(a)^x$ are the signed maximal minors of this matrix.
There are seven generating monomials in $L(2,P)$. Their deformations
are the following:
\begin{align} \notag
& b_1b_2 - a_2u_{a,b} - c_2u_{c,b} - d_2u_{d,b} & \\ \notag
& c_1c_2 - a_2u_{a,c} - b_2u_{b,c} - d_2u_{d,c} & \\ \notag
& d_1d_2 - a_2u_{a,d} - b_2u_{b,d} - c_2u_{c,d} & \\ \label{eq:ExStarForms}
& a_1b_2 - u_{\empt,a}D(a)^b & \\ \notag
& a_1c_2 - u_{\empt,a}D(a)^c & \\ \notag
& a_1d_2 - u_{\empt,a}D(a)^d & \\ \notag
& a_1a_2 - u_{\empt,a}D(a)^a &  
\end{align}

\begin{question}
Is there a natural description of the variety defined by these equations?
\end{question}

\subsection{Ladder determinantal varieties}
Consider now the poset $P$:

\begin{center}
\begin{tikzpicture}[scale=1, vertices/.style={draw,fill=black, circle, inner 
sep=1.5pt}]
\node [vertices, label=below:{$a$}] (0) at (0,0){};
\node [vertices, label=left:{$b^1$}] (1) at (-.75+0,1.33){};
\node [vertices, label=right:{$c^1$}] (2) at (-.75+1.5,1.33){};
\node [vertices, label=right:{$c^{s-1}$}] (3) at (1.5,2.66){};
\node [vertices, label=right:{$c^s$}] (4) at (1.5+.75,3.99){};
\node [vertices, label=left:{$b^{r-1}$}] (5) at (-1-.75,1.77+1.33){};
\node [vertices, label=left:{$b^r$}] (6) at (-1-1.5,1.77+2.66){};
\foreach \to/\from in {0/1, 0/2, 3/4, 5/6}
\draw [-,thick] (\to)--(\from);
\foreach \to/\from in {2/3, 1/5}
\draw [dashed,thick] (\to) -- (\from);
\end{tikzpicture}
\end{center}
where $r,s \geq 2$.
We let $b^{r+1}_1 = 1 = c^{s+1}_1$, and for short write $b = b^1$ and $c = c^1$.
The matrix $M(a)$ is:
\[ \left [ \begin{matrix}
-u_{a,b} & b_1 & S_cT_c(b) \\
-u_{a,c} & S_bT_b(c) & c_1
\end{matrix} \right ].  \]
The monomials in $L(2,P)$ deform to the following polynomials 
generating $J(2,P)$:
\begin{align}
&  b^i_1 b^j_2 - b^{i-1}_2 (\prod_{k=i}^j u_{b^{k-1},b^k}) b^{j+1}_1 & 
2 \leq i \leq j \leq r \label{eq:Exbb}\\
& c^i_1 c^j_2 - c^{i-1}_2 (\prod_{k = i}^j u_{b^{k-1},b^k}) c^{j+1}_1  &
2 \leq i \leq j \leq s \notag \\
& b^1_1b^j_2 - (a_2u_{a,b^1} + \sum_{j = 1}^s c^j_2u_{c^j,b^1})
(\prod_{k = 2}^j u_{b^{k-1},b^k}) b^{j+1}_1 & 1 \leq j \leq r \label{eq:Exb1b}\\
& c^1_1c^j_2 - (a_2u_{a,c^1} + \sum_{j = 1}^s b^j_2u_{b^j,c^1})
(\prod_{k = 2}^j u_{c^{k-1},c^k}) c^{j+1}_1 & 1 \leq j \leq r \notag \\
& a_1 b^j_2 - u_{\empt,a}D(a)^b (\prod_{k = 2}^j u_{b^{k-1},b^k}) b^{j+1}_1 & 
1 \leq j \leq r \label{eq:Exab}\\
& a_1 c^j_2 - u_{\empt,a}D(a)^c (\prod_{k = 2}^j u_{c^{k-1},c^k}) c^{j+1}_1 & 
1 \leq j \leq s \notag \\
& a_1a_2 - u_{\empt,a} D(a)^a \label{eq:Exaa}
\end{align}

Write $T = a_2u_{a,b}u_{a,c} + u_{a,b}T_b(c) + u_{a,c}T_c(b)$. 
We claim that the ideal $J(2,P)$ they generate is precisely
the ideal of two-minors of the ladder:
\[ \begin{matrix}
\vdots & \vdots & \vdots & &   \\
S_c(c^2) & c^2_2 & 0 & 0 & \\
S_c(c^1) & c^1_2 & 0 & 0 & \cdots \\
D(a)^b & T & b^1_2 & b^2_2 & \cdots\\
u_{\empt,a}^{-1} a_1 & D(a)^c & S_b(b^1) & S_b(b^2) & \cdots 
\end{matrix}, \]
if we localize by inverting $u_{\empt,a}, u_{a,b}$ and $u_{a,c}$.
Let us call the part of the above ladder starting
from the column with $b^1_2$, the left leg.
The minors of the left leg are precisely the equations \eqref{eq:Exbb}.
The minors formed by taking the first column (only the two lowest
entries) and a column of the left leg, gives the equations \eqref{eq:Exab}.
Now by multiplying the column of $b^i_2$ with $u_{a,b}u_{b^i,c}$ 
and subtracting the sum of all these scaled columns from the column
with $T$, we obtain a column
\[ \left [ \begin{matrix} u_{a,c}T_c(b) + a_2u_{a,b}u_{a,c} \\
- u_{a,c} b_1
\end{matrix} \right ].
\]
Taking the determinant of this column and the columns of the left leg,
we obtain the equations \eqref{eq:Exb1b} after inverting $u_{a,c}$.

Lastly we want to obtain the equation \eqref{eq:Exaa}.
Take the determinant of the lower left $2 \times 2$ matrix in the ladder.
This is (after mulitplying with $u_{\empt,a}$):
\begin{align*} & a_1 T - u_{\empt,a}D(a)^bD(a)^c &   \\
= & u_{a,b}u_{a,c}a_1a_2 + a_1u_{a,b}T_b(c) + a_1u_{a,c}T_c(b)& \\
+ & u_{\empt,a}[ u_{a,b}u_{a,c}b_1 c_1 - c_1u_{a,b}^2 S_bT_b(c) - b_1u_{a,c}^2
S_cT_c(b) + u_{a,b}u_{a,c} S_bT_b(c)S_cT_c(b)].&
\end{align*}
By subtracting the following polynomial, obtained as a linear
combination  of the $r$ first equations in \eqref{eq:Exab}:
\[ u_{a,b}[ a_1 T_b(c) - u_{\empt,a}D(a)^b S_bT_b(c)]  \]
and the polynomial, obtained as a linear combination of the
$s$ second equations in \eqref{eq:Exab}:
\[ u_{a,c}[ a_1 T_c(b) - u_{\empt,a}D(a)^c S_cT_c(b)],  \] 
we obtain the last equation \eqref{eq:Exaa}.

\subsection{Variety of complexes}
For definition of variety of complexes refer to \cite{DeConcini198157}.
Let $P$ be the poset with Hasse diagram:

\begin{center}
\begin{tikzpicture}[scale=1, vertices/.style={draw,fill=black, circle, inner sep=1.5pt}]
\node [vertices, label=below:{$a$}] (0) at (0,0){};
\node [vertices, label=left:{$b$}] (1) at (-.75+0,1.33){};
\node [vertices, label=right:{$c^1$}] (2) at (-.75+1.5,1.33){};
\node [vertices, label=right:{$c^{s-1}$}] (3) at (1.5,2.66){};
\node [vertices, label=right:{$c^s$}] (4) at (1.5+.75,3.99){};
\foreach \to/\from in {0/1, 0/2, 3/4}
\draw [-,thick] (\to)--(\from);
\draw [dashed,thick] (2) -- (3);
\end{tikzpicture}
\end{center}
where $s\geq 2$. We write $c = c^1$ and define $c^{s+1}_1 = 1$.
We have
\[
M(a) = \begin{bmatrix}
-u_{a,b} & b_1 & -\sum_{j=1}^s u_{c^j,b} c_1^{j+1} \prod_{k=2}^j u_{c^{k-1},c^k}\\
-u_{a,c} & -u_{b,c} & c_1\\
\end{bmatrix}
\]
The deformed relations of $L(2,P)$ are the following:
\begin{align}
\label{VC01} & c^i_1c^j_2 - c^{i-1}_2 (\prod_{k=i}^j u_{c^{k-1},c^k}) c^{j+1}_1 & 2\leq i\leq j\leq s\\
\label{VC02} & c^1_1c^j_2 - (a_2u_{a,c} + b_2 u_{b,c}) \prod_{k=2}^j u_{c^{k-1},c^k} c^{j+1}_1 & 1\leq j\leq s\\
\label{VC03} & b_1b_2 - a_2u_{a,b} - \sum_{j=1}^s c^j_2u_{c^j,b}\\
\label{VC04} & a_1c^j_2 - u_{\emptyset,a} D(a)^{c} (\prod_{k=2}^j u_{c^{k-1},c^k}) c^{j+1}_1 & 1\leq j\leq s\\
\label{VC05} & a_1b_2 - u_{\emptyset,a} D(a)^b\\
\label{VC06} & a_1a_2 - u_{\emptyset,a} D(a)^a
\end{align}

The 2-minors of the matrix
\[ X=
\begin{bmatrix}
a_1 & c^1_1 u_{\emptyset,a} & c^2_1 u_{\emptyset,a} u_{a,c^1} & \cdots & c^{s+1}_1 u_{\emptyset,a} u_{a,c^1} \prod_{k=2}^s u_{c^{k-1},c^k}\\
b_1 + u_{a,b} u_{a,c}^{-1} u_{b,c} & a_2 + b_2 u_{a,c}^{-1} u_{b,c} &  c^1_2 & \cdots & c^s_2\\
\end{bmatrix}
\]
except the one associated to the first two columns give us the relations \eqref{VC01}, \eqref{VC02} and \eqref{VC04}.
Now define the matrix $Y$ as
\[
\begin{bmatrix}
b_2 & -u_{a,b} & -u_{c^1,b} & \cdots & -u_{c^s,b}
\end{bmatrix}
\]
The two entries of the matrix $XY$ give the relations \eqref{VC03} and \eqref{VC05}. If we multiply equation \eqref{VC05} with $u_{a,c}^{-1}u_{b,c}$ and subtract
it from the minor of the first two columns of $X$, we get equation \eqref{VC06}.

\medskip
In the appendix we give a larger poset $P$ and  the generators 
of $J(2,P)$. As we see these polynomials grow quickly in size.

\section{The fine positive grading on the ideal $J(2,P)$}
\label{sec:Grad}

In this section we show that the ideal $J(2,P)$ is very finely
graded. In fact it is graded by a free abelian group on
$2|P|$ free generators. We show that this grading is positive
in the sense of \cite{HaSt}. This enables us to state
a simple criterion for flatness of homogeneous ideals, which
we will apply in Section \ref{sec:Flat} to conclude that the
quotient ring $B(2,P)/J(2,P)$ is flat over its base ring 
$B = \kr[u_{\empt,\rot}, u_{q,p}]$.

\subsection{The grading on $J(2,P)$}
Let $\ztop$ be the free abelian group of order $2|P|$ generated by  
the $p_1$'s and $p_2$'s for $p \in P$. The ideal $J(2,P)$ lives
in the polynomial ring 
\[ B(2,P) =  \kr[x_{[2] \times P}]  \te_{\kr} 
\kr[u_{\empt,\rot}, u_{q,p}]. \]
The pairs $(q,p)$ are all pairs such that the meet of $q$ and $p$
is the parent of $p$. They, together with $u_{\empt,\rot}$
correspond to the minimal 
generators of the first cotangent module $T^1(\kr[x_{[2] \times P}]/L(2,P))$
by Corollary \ref{cor:CotangRP}.
For an element $p \in P$ let $b^1, \ldots, b^m$ be its children.
Denote by $\hat{p}$ the degree $p_2 - b^1_1 - b^2_1 - \cdots - b^m_1$ in 
$\ztop$.

\medskip
Now define a grading on the $B(2,P)$ by letting the variable
$x_{i,p}$ (which we write as $p_i$)
have degree (with some abuse of notation) $p_i$ in the abelian group $\ztop$.
Also define the degree of $u_{q,p}$ to be $
p_1 -q_2 + \hat{p}$.



\begin{proposition}
\label{pro:Graded}
The ideal $J(2,P)$ is homogeneous for this $\mathbb{Z}([2]\times P)$-grading. Moreover:
\begin{enumerate}
\item $T(p)$ is homogeneous of degree $p_1+ \hat{p}$.
\item If $p \leq q$ then $S_p(q)$ is homogeneous of degree
$q_2 - \hat{p}$.
\item When $p$ and $q$ are siblings, $S_qT_q(p)$ 
is homogeneous of degree $p_1+\hat{p}-\hat{q}$.
\item Let $q$ be a child of $p$. The determinant 
$D(p)^q$ is homogeneous of degree $\hat{q}- \hat{p}$. 
The determinant $D(p)^p$ is homogeneous of degree $p_2 - \hat{p}$.
\end{enumerate}
\end{proposition}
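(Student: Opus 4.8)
The plan is to run an induction on the depth of the relevant element, mirroring step by step the recursive construction of $D$, $R$ and $S$ in Section~\ref{sec:Family}. Statement (1) needs no induction: for a non-root element $b$ with parent $a$ the term $a_2u_{a,b}$ of $T_a(b)$ has degree $a_2+(b_1-a_2+\hat b)=b_1+\hat b$, and for a sibling $c$ of $b$ each term $q_2u_{q,b}$ of $T_c(b)$ (with $q\geq c$) has degree $q_2+(b_1-q_2+\hat b)=b_1+\hat b$, since the meet of $q$ and $b$ is the parent $a$; hence the $\kr$-linear combination $T(b)$ is homogeneous of degree $b_1+\hat b$, and for the root $T(\rot)=u_{\empt,\rot}$ has degree $\rot_1+\hat\rot$ under the convention $\empt_2=0$. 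Granting (1) and (2), statement (3) is then immediate: for distinct siblings $p,q$ we have $S_qT_q(p)=-\sum_{r\geq q}u_{r,p}S_q(r_2)$, and by (2) each term $u_{r,p}S_q(r_2)$ has degree $(p_1-r_2+\hat p)+(r_2-\hat q)=p_1+\hat p-\hat q$, independently of $r$.

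The core is statements (2) and (4), which I would prove together by induction on $d=\depth(a)$, the inductive hypothesis being that (2), (3) and (4) hold whenever the element at whose depth they are defined --- namely $p$ for (2) and (4), and $q$ for (3) --- has depth $<d$. This is vacuous for $d=0$, where $D(a)^a=S_a(a)=1$ and $\hat a=a_2$, so all the degrees in (2) and (4) equal $a_2-\hat a=0$ (and (3) follows from (1), (2) as above). For the step with $\depth(a)=d$, write $b^1,\dots,b^m$ for the children of $a$ and $b^0=a$, and observe each $b^i$ has depth $<d$. The key point is that $M(a)=[S_{b^i}T_{b^i}(b^j)]$ is graded for the $\ztop$-grading, with row degrees $\rho_j=b^j_1+\hat{b^j}$ ($j=1,\dots,m$) and column degrees $\gamma_i=-\hat{b^i}$ ($i=1,\dots,m$), $\gamma_0=-a_2$: indeed the entry $-u_{a,b^j}$ in row $j$, column $0$ has degree $b^j_1-a_2+\hat{b^j}=\rho_j+\gamma_0$; the diagonal entry $b^j_1$ has degree $\rho_j+\gamma_j$; and for $1\leq i\neq j$ the entry $S_{b^i}T_{b^i}(b^j)$ has degree $b^j_1+\hat{b^j}-\hat{b^i}=\rho_j+\gamma_i$ by the inductive hypothesis for (3) applied to the siblings $b^i,b^j$ (which is available since $\depth(b^i)<d$). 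Hence the $m\times m$ minor obtained by deleting column $i$ is homogeneous of degree $\sum_{j=1}^m\rho_j+\sum_{k\neq i}\gamma_k$; using $\sum_{j=1}^m b^j_1=a_2-\hat a$ this collapses to $\hat{b^i}-\hat a$ when $i\geq1$ and to $a_2-\hat a$ when $i=0$, which are exactly the degrees asserted for $D(a)^{b^i}$ and $D(a)^a$ in (4), the signs $(-1)^i$ being irrelevant to the degree.

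With (4) in hand for $a$ and, inductively, for all elements of smaller depth, statement (2) for $a$ follows by a telescoping computation: in a rooted tree the interval $[a,b]$ is a chain $a=p_0\prec p_1\prec\cdots\prec p_\ell=b$, so $R(a,b)=\prod_k D(p_k)^{p_{k+1}}$ is homogeneous of degree $\sum_k(\hat{p_{k+1}}-\hat{p_k})=\hat b-\hat a$, whence $S_a(b_2)=R(a,b)D(b)^b$ has degree $(\hat b-\hat a)+(b_2-\hat b)=b_2-\hat a$. This closes the induction, after which (3) at depth $d$ follows from (1) and (2) as in the first paragraph. Finally $J(2,P)$ is homogeneous because each generator $p_1q_2-T(p)S_p(q)$ is a difference of a term of degree $p_1+q_2$ and a term of degree $(p_1+\hat p)+(q_2-\hat p)=p_1+q_2$, by (1) and (2). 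The whole argument is bookkeeping; the only things that need genuine care are keeping the nested induction honest --- within a fixed depth, (4) must precede (2), which then feeds (3) at that depth and hence (4) one level up --- handling the root's variable $u_{\empt,\rot}$ with the convention $\empt_2=0$, and noting that the symbolic entries $S_bT_b(b)=b_1$ and $S_aT_a(b)=-u_{a,b}$ (which are not genuine compositions) carry degrees $b_1$ and $b_1-a_2+\hat b$ consistent with this scheme. I do not foresee any obstacle beyond this.
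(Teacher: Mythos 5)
Your proof is correct and takes essentially the same approach as the paper: (1) directly, then (2) and (4) together by induction from maximal elements downward (the paper uses a linear extension $\eta$ where you use $\depth$, but these give the same induction order), with (3) derived from (2), and the telescoping along the chain from $p$ to $b$ to finish (2). The one stylistic improvement in your write-up is framing the determinant-degree calculation via explicit row degrees $\rho_j$ and column degrees $\gamma_i$ on $M(a)$, so that all minors are homogeneous of the predicted degree for free; the paper instead expands $D(p)^p$ (and implicitly $D(p)^q$) over permutations and sums the term-by-term degrees. Both are bookkeeping of the same computation, and your organization makes the degree of $D(a)^{b^i}$ for general $i$, including $i=0$, slightly more transparent.
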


\begin{proof}
(1) easily follows from the definitions. Note that if $\rho$ is the root then 
$\deg(T(\rho)) = \deg(u_{\emptyset,\rho}) = \rho_1 + \hat{\rho}$.

We prove (2) and (4) simultaneously since they are both dependent on each other. (3) follows from (2) by definition.
Let $m$ be the cardinality of $P$ and $\eta : P \rarr [m]$ a linear
extension.


We prove (2) and (4) by descending induction on $\eta(p)$.
Suppose $\eta(p) = m$, the maximal possible.
Then $D(p)^{p} = S_{p}(p) = 1$ has degree $p_2 - \widehat{p} = 0$.
 
Now we show that if for any two elements $p \leq q$ such that $\eta(p) = k+1$
we have 
\[
\deg D(p)^p = p_2 - \hat{p}, \quad \deg D(p)^q = \hat{q}-\hat{p}, 
\quad \deg S_p(q) = q_2 - \hat{p} \]
then the above statement is also true for any two elements $p \leq q$
with $\eta(p) = k$. 

So suppose $\eta(p) = k$ and let $b^1,\ldots,b^m$ be children of $p$. These
have all $\eta$-value 
$\geq k+1$. By assumption we have
\begin{align*}
\deg(D(p)^p) & = \deg \big( \sum_{\sigma \in S_m} \prod_{i=1}^m S_{b^i} T_{b^i} (b^{\sigma(i)})\big)\\
& = \sum_{i=1}^{m} b^i_1 + \sum_{i=1}^{m} \widehat{b^i} - \sum_{i=1}^{m} \widehat{b^i} = p_2 - \hat{p}. 
\end{align*}
where $S_m$ is symmetric group on $m$ letters.

Similarly when $q$ is a child of $p$, we have $\deg(D(p)^q) = \hat{q}-\hat{p}$.
For $p \leq q$ in $P$ let $p=x^0,\ldots,x^{m}=q$ be the maximal chain between 
$p$ and $q$ in $P$. Then
\begin{align*}
\deg (S_p(q)) & = \deg\big( D(p)^{x^1} D(x^1)^{x^2} \cdots D(x^{m-1})^{q}D(q)^q \big)\\
& = (\widehat{x^1}- \hat{p}) + (\widehat{x^2} - \widehat{x^1}) + \cdots+ (\hat{q} - \hat{x^{m-1}}) + (q_2 - \hat{q})\\
& = q_2-\hat{p},
\end{align*}
This completes the proof of (2)-(4). 
The main assertion now follows from (1) and (2).
\end{proof}

\subsection{Positive gradings by an abelian group}

Let $Y$ be a finite-dimensional vector space graded by an abelian
group $A$. This gives an $A$-graded polynomial ring $\kr[Y]$.
This grading is {\it positive} if the only elements in $\kr[Y]$ of 
degree $0$ are the constants in $\kr$. 
By \cite[Prop. 8.6]{StMi}, this is equivalent to each
graded piece $\kr[Y]_{a}, a \in A$ being a finite-dimensional
vector space over $\kr$. The following is another characterization.

\begin{lemma}
The grading by $A$ on $\kr[Y]$ is positive, iff there is a homomorphism 
$A \rarr \ZZ$ such that the $\ZZ$-degree of any $A$-homogeneous 
element of $Y$ is positive, i.e. is in  $\ZZ_{> 0}$.
\end{lemma}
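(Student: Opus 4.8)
The plan is to prove both directions of the equivalence, with the easy direction first.

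\medskip
\noindent\textbf{The easy direction.} Suppose there is a homomorphism $\phi \colon A \rarr \ZZ$ such that every $A$-homogeneous element $y$ of $Y$ has $\phi(\deg y) \in \ZZ_{>0}$. Picking an $A$-homogeneous basis $y_1, \ldots, y_N$ of $Y$, any monomial $y_1^{e_1} \cdots y_N^{e_N}$ has $\ZZ$-degree $\sum e_i \phi(\deg y_i)$, which is a nonnegative integer and is zero only when all $e_i = 0$. Hence the only monomials of $A$-degree lying in $\ker \phi$ of $\ZZ$-degree zero are constants; more directly, if a nonzero monomial had $A$-degree $0$ then its $\ZZ$-degree would be $0$, forcing it to be constant. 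Since an $A$-homogeneous element of degree $0$ is a $\kr$-linear combination of monomials of $A$-degree $0$, it must be a scalar. So the grading is positive.

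\medskip
\noindent\textbf{The hard direction.} Assume the grading is positive. The goal is to produce the homomorphism $A \rarr \ZZ$. First I would reduce to the subgroup of $A$ that actually matters: let $A_0 \sus A$ be the subgroup generated by the degrees $\deg y_1, \ldots, \deg y_N$ of a homogeneous basis of $Y$. Only $A_0$ is relevant, and it is finitely generated. Writing $A_0 \iso \ZZ^r \oplus (\text{torsion})$, I would first argue the torsion part is harmless: any homomorphism $\ZZ^r \rarr \ZZ$ extends to $A_0$ (kill the torsion) and then to $A$ (since $\ZZ$ is injective as an abelian group, or just extend by zero on a complement after tensoring appropriately — more carefully, use that $\Hom(-,\ZZ)$ applied to $A_0 \hookrightarrow A$ together with divisibility gives an extension, or simply compose with a retraction onto a chosen complement of $A_0$ if one exists; the cleanest route is to note it suffices to define the map on $A_0$ and then use that any homomorphism $A_0 \to \ZZ$ factors through $A_0/\text{tors} \to \ZZ$ and extends to $A \to \ZZ$ by injectivity of $\QQ$ followed by clearing denominators, or by Zorn). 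The substantive content is therefore: given finitely many elements $a_1 = \deg y_1, \ldots, a_N = \deg y_N$ generating a free group $\ZZ^r$, positivity says no nontrivial $\NN$-combination $\sum e_i a_i$ (with not all $e_i$ zero) equals $0$; I must find a linear functional $\ZZ^r \rarr \ZZ$ strictly positive on all the $a_i$.

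\medskip
\noindent\textbf{The key step.} This is a standard separating-hyperplane argument, which I expect to be the main obstacle to state cleanly. The hypothesis says the cone $C = \{\sum_{i} \lambda_i a_i : \lambda_i \geq 0\} \sus \RR^r$ is \emph{pointed}: $C \cap (-C) = \{0\}$. Indeed if some $v \neq 0$ lay in $C \cap (-C)$, clearing denominators would give a nontrivial relation $\sum e_i a_i = \sum f_j a_j$ with disjoint supports and $e_i, f_j \in \NN$, hence $\sum (e_i + f_i') a_i = 0$ nontrivially after rearranging — contradicting positivity. A pointed finitely generated cone admits a linear functional $\ell \colon \RR^r \rarr \RR$ with $\ell > 0$ on $C \setminus \{0\}$, in particular $\ell(a_i) > 0$ for all $i$; this is the dual statement that the dual cone $C^\vee$ has nonempty interior, which holds precisely because $C$ is pointed. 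Perturbing $\ell$ slightly we may take it rational, then scale to integral, giving $\ell \colon \ZZ^r \rarr \ZZ$ with $\ell(a_i) \in \ZZ_{>0}$. Composing with the projection $A_0 \rarr \ZZ^r$ and extending to $A$ as above produces the desired homomorphism, since every $A$-homogeneous element of $Y$ has degree one of the $a_i$. The one piece of care needed is to make sure every homogeneous \emph{element} of $Y$, not just the chosen basis, has positive $\ZZ$-degree — but a homogeneous element of degree $a$ is a nonzero combination of basis elements all of degree $a$, so $a$ is one of the $a_i$ and we are done.
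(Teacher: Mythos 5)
Your argument is correct in substance, but it is considerably more explicit than the paper's proof: where you prove the separating-hyperplane fact from scratch (reduce to the subgroup $A_0$ generated by the degrees, observe positivity forces the cone spanned by the $a_i$ to be pointed, pick a strictly positive rational functional and clear denominators), the paper simply cites \cite[Cor.~8.7]{StMi}, applied to the torsion-free part of $A$. That corollary is precisely the statement you re-derive, so the two proofs carry the same mathematical content; yours is a self-contained expansion of the citation.

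One cautionary remark on the step you yourself flag as delicate: your list of reasons why the functional on $A_0$ (or on $A_0/\mathrm{tors}$) extends to all of $A$ contains some that do not work. In particular $\ZZ$ is \emph{not} injective as an abelian group, and the ``extend to $\QQ$ by injectivity of $\QQ$, then clear denominators'' route also fails in general (take $A = \QQ$, $A_0 = \ZZ$: the identity $\ZZ \to \ZZ$ extends to $\QQ \to \QQ$ but there is no nonzero homomorphism $\QQ \to \ZZ$, and indeed the lemma as literally stated is false for $A = \QQ$). The extension is unproblematic when $A$ is finitely generated, because then the torsion subgroup splits off and $A/\mathrm{tors}$ is free, which is the situation in all of the paper's applications (and implicitly in the citation to \cite{StMi}, whose setup has $A$ a finitely generated abelian group). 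The paper's own proof glosses over the same point, so this is not a defect particular to your write-up, but you should either add the hypothesis that $A$ is finitely generated or restrict attention to the subgroup generated by the relevant degrees and note that a genuine extension to all of $A$ is available only in that setting.
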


\begin{proof} The if direction is clear. The only if direction
follows by \cite[Cor. 8.7]{StMi} by applying this to the
torsion free part of $A$, which must be nonzero.
(Note that the definition of {\it positive} grading in \cite[Chap.8]{StMi}
requires $A$ to be torsion-free. We follow the convention of
\cite{HaSt} which does not have this requirement.)

\end{proof}

\begin{proposition}
The grading on $B(2,P)$ given in Proposition \ref{pro:Graded}
is positive.
\end{proposition}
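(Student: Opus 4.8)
The plan is to invoke the preceding Lemma, which reduces positivity of the $\ztop$-grading on $\kr[Y] = B(2,P)$ to the existence of a group homomorphism $\phi \colon \ztop \rarr \ZZ$ that is strictly positive on the degree of every variable, i.e. on $p_1, p_2$ for $p \in P$, on the $u_{q,p}$, and on $u_{\empt,\rot}$. Because $\ztop$ is free abelian on the symbols $p_1$ and $p_2$, such a $\phi$ amounts to a choice of integers $x_p := \phi(p_1)$ and $y_p := \phi(p_2)$, after which only finitely many inequalities (one per $u$-variable) remain to be checked.

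First I would rewrite those inequalities combinatorially. Let $\ell(p)$ denote the distance from the root to $p$ in the tree, so $\ell(\rot) = 0$ and $\ell(b) = \ell(a) + 1$ whenever $a$ is the parent of $b$. Using the tree property that $\{x \in P : x \leq p\}$ is a chain for each $p$, one checks that the pairs $(q,b)$ carrying a variable $u_{q,b}$ — those with $q \wedge b$ equal to the parent of $b$ — are precisely the pairs with $\mathrm{parent}(b) \leq q$ and $b \not\leq q$; in particular $\ell(q) \geq \ell(b) - 1$ for every such $q$. Combining the degree formulas of Proposition~\ref{pro:Graded} (recall $\widehat{p} = p_2 - \sum_{c} c_1$, summed over children $c$ of $p$) with the definition of $\phi$ gives
\[
\phi(\deg u_{q,b}) = \Bigl( x_b - \sum_{c \text{ child of } b} x_c \Bigr) + y_b - y_q,
\qquad
\phi(\deg u_{\empt,\rot}) = \Bigl( x_\rot - \sum_{c \text{ child of } \rot} x_c \Bigr) + y_\rot .
\]

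This dictates the choice: define $x_p$ by induction on $\depth(p)$ via $x_p = 2 + \sum_{c \text{ child of } p} x_c$ (so $x_p = 2$ for maximal $p$, and $x_p \geq 2$ always), and set $y_p = M - \ell(p)$ with $M = 1 + \depth(\rot)$, so that $y_p \geq 1$. Then $x_b - \sum_{c} x_c = 2$ for every $b$, whence $\phi(\deg u_{q,b}) = 2 + y_b - y_q = 2 + \ell(q) - \ell(b) \geq 1$ by the bound $\ell(q) \geq \ell(b) - 1$, and $\phi(\deg u_{\empt,\rot}) = 2 + y_\rot = 2 + M > 0$; also $\phi(p_1) = x_p > 0$ and $\phi(p_2) = y_p > 0$. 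Thus $\phi$ is positive on all variables and the Lemma gives the claim. I do not anticipate a real obstacle: the only step carrying content is the combinatorial description of the admissible pairs $(q,b)$ and the ensuing inequality $\ell(q) \geq \ell(b)-1$, and everything after that is immediate.
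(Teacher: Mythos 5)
Your proof is correct and follows essentially the same strategy as the paper: reduce positivity to finding a homomorphism $\ztop \rarr \ZZ$ that is strictly positive on all variables, and construct it by defining $\phi(p_1)$ inductively on $\depth(p)$ so that $\phi(p_1) > \sum_c \phi(c_1)$ over children $c$. The only difference is that you set $\phi(p_2) = M - \ell(p)$ and must then invoke the combinatorial bound $\ell(q) \geq \ell(b)-1$ for admissible pairs; the paper takes the simpler constant choice $\phi(p_2) = 1$, under which the $-q_2 + p_2$ contribution in $\deg u_{q,p}$ cancels outright and no bound on levels is needed.
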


\begin{proof}
Define a map from $\ZZ([2] \times P) \mto{d} \ZZ$ by letting every  
$d(p_2) = 1$ and define $d(p_1)$ inductively on $\depth(p)$ such
that the $d(p)$ is positive and larger than the sum
$\sum_{b}d(b)$ where we sum over the children $b$ of $p$.
Then it is seen that all variables $p_i$ have positive $d$-values,
as well as all variables $u_{q,p}$. 
\end{proof} 

Let $J \sus \kr[Y]$ be an ideal which is homogeneous for a {\it positive} 
$A$-grading on $Y$.  Let $U \sus Y$ be a homogeneous subspace for this
grading, and
$I \sus \kr[Y/U]$ be the ideal such that 
\[ \kr[Y/U]/I = (\kr[Y]/J) \te_{\kr[Y]} \kr[Y/U]. \]
Let $f_1, \ldots, f_k$ be generators of $J$ and let $\ov{f_1}, \ldots,
\ov{f_k}$ denote their images in $I$, which will generate $I$.
The following is the criterion we use, in Theorem \ref{thm:Flat},
to show that $J(2,P)$ is a flat deformation of $L(2,P)$.

\begin{proposition} \label{pro:GradedFlatlift}
Suppose the $A$-grading on $\kr[Y]$ is positive.
If every relation of $\ov{f_1}, \ldots, \ov{f_k}$ lifts to a relation
for $f_1, \ldots, f_k$, then $\kr[Y]/J$ is flat over $\kr[U]$.
\end{proposition}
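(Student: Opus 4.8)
The plan is to reduce the flatness of $A := \kr[Y]/J$ over $R := \kr[U]$ to the vanishing of a single $\Tor$-module and then to recognize that module as exactly the obstruction to lifting relations. Write $S := \kr[Y]$ and pick a homogeneous complement $V$ to $U$ inside $Y$, so that $Y = U \oplus V$, the ring $S = \kr[U]\te_\kr \kr[V]$ is a \emph{free} $R$-module (on the monomial basis of $\kr[V]$), and $\kr[Y/U] \iso \kr[V] = S/\mm S$ where $\mm := (U) \sus R$. Since the $A$-grading on $\kr[Y]$ is positive, there is a group homomorphism $A \to \ZZ$ along which every $A$-homogeneous element of $Y$ acquires strictly positive degree; transporting this grading makes $R$, $S$ and $A$ into $\NN$-graded objects, bounded below and with finite-dimensional graded pieces, and with $R_0 = \kr$. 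In this situation $\mm = R_{>0}$ is the graded maximal ideal of $R$, $R/\mm = \kr$, and $A\te_R \kr = A/\mm A = \kr[V]/\ov J = \kr[Y/U]/I$ is the special fibre. The local criterion for flatness is available here — positivity forces $A$ to be separated for the $\mm$-adic topology, so graded Nakayama applies — and it yields that $A$ is flat (indeed free) over $R$ if and only if $\Tor_1^R(\kr, A) = 0$. So it is enough to prove this vanishing.

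For that I would feed the short exact sequence $0 \to J \to S \to A \to 0$ into $\kr\te_R(-)$. As $S$ is $R$-free, $\Tor_1^R(\kr, S) = 0$, and the long exact sequence collapses to
\[ \Tor_1^R(\kr, A) \;\iso\; \ker\!\big(\kr\te_R J \to \kr\te_R S\big) \;=\; \ker\!\big(J/\mm J \to \kr[V]\big). \]
I would then unwind the right-hand kernel. Any element of $J/\mm J$ is the class $[\,\sum_i g_i f_i\,]$ of an $S$-linear combination of the generators $f_1,\dots,f_k$ of $J$, and it is sent to $\sum_i \ov{g_i}\,\ov{f_i} \in \kr[V]$, where $\ov{g_i}$ is the reduction of $g_i$ modulo $(U)$. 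Hence an element of the kernel is nothing but a class $[\,\sum_i g_i f_i\,]$ for which $(\ov{g_1},\dots,\ov{g_k})$ is a relation among $\ov{f_1},\dots,\ov{f_k}$ in $\kr[Y/U]$.

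This is where the hypothesis closes the argument. By assumption such a relation lifts: there are $g_i' \in S$ with $\sum_i g_i' f_i = 0$ in $S$ and $\ov{g_i'} = \ov{g_i}$ for every $i$. Then $g_i - g_i' \in (U)S$, so $\sum_i (g_i - g_i') f_i \in (U)J = \mm J$, and therefore $[\,\sum_i g_i f_i\,] = [\,\sum_i g_i' f_i\,] = 0$ in $J/\mm J$. Thus the kernel is zero, $\Tor_1^R(\kr, A) = 0$, and $\kr[Y]/J$ is flat over $\kr[U]$, as claimed. (The grading is compatible with every map above, so one may also run this one homogeneous degree at a time.)

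The step I expect to carry all the weight is the very first reduction, because $A$ is in general not a finitely generated $R$-module and the classical local criterion for flatness does not literally apply. The role of positivity of the grading is precisely to repair this: it makes $A$ (and $J$, $S$) $\NN$-graded and bounded below with finite-dimensional pieces, hence $\mm$-adically separated, which is exactly the hypothesis needed for the graded Nakayama lemma and the graded local flatness criterion. Once that reduction is granted, the remainder is a short diagram chase together with a single application of the lifting hypothesis.
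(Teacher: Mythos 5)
Your proof is correct and takes essentially the same route as the paper, which simply cites Sernesi's local criterion for flatness and his lifting-of-relations criterion for the homomorphism $\kr[Y]/J \rarr \kr[Y/U]/I$, noting that the arguments go through unchanged in the positively graded setting. You have in effect spelled out the content of the cited theorems: the reduction to $\Tor_1^{\kr[U]}(\kr, \kr[Y]/J) = 0$ via the graded local criterion (with positivity supplying the boundedness-below and finite-dimensionality that make graded Nakayama work), and the identification of that $\Tor$ group with the obstruction to lifting relations among $\ov{f_1},\ldots,\ov{f_k}$.
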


\begin{proof}
We apply the local criterion of flatness \cite[Thm.A.5]{Ser}
and the criterion of lifting of relations \cite[Thm.A.10]{Ser},
to the algebra homomorphism
$\kr[Y]/J \rarr \kr[Y/U]/I$.
The situation in \cite[Thm.A.5]{Ser} 
is a local homomorphism of local noetherian rings.
Our situation is a graded homomorphism of positively graded rings.
This situation works just as well for the arguments.
\end{proof}



\section{First order deformations and the cotangent cohomology}

This sections presents the basic general deformation theory that we need.
General references for deformation theory are 
\cite[Chap.3]{Ser}, \cite[Chap.1,3]{DefoHar} or \cite[Sec.3]{DefoStev}.
We compute explicitly the first cotangent cohomology 
$T^1(\kr[x_{[2] \times P}/L(2,P))$ for any quadratic letterplace ideal of 
a poset $P$. As we shall see the elements of this 
module are remarkably simple in form.

\subsection{The deformation functor}
\label{FamSubsecDefo}

We consider a $\kr$-algebra $R$  and an ideal $I$ in $R$. 
Let $B$ be another $\kr$-algebra with a distinguished $\kr$-point 
$b \in \Spec B$ corresponing to a morphism $B \rarr \kr$. 
A deformation over $B$
of the ideal $I \sus R$,  is an ideal $J \sus R \te_\kr B$ such that: 
\begin{itemize} 
\item $(R\te_\kr B)/J$ is flat over $B$,
\item The natural map $R \te_\kr B \rarr R$ induces a map 
$(R \te_\kr B)/J \rarr R/I$ which becomes an isomorphism
\[ (R \te_\kr B)/J \te_B \kr \overset{\iso}\rarr R/I. \]
\end{itemize}
Let $\sets$ be the category of sets, and 
$\kartalg$ be the category of local artinian $\kr$-algebras 
with residue field $\kr$.

The functor
of infinitesimal deformations of $I$
\[ \Def{I} :  \kartalg \rarr  \sets, \]
is given by letting 
$\Def{I}(A)$ be the set of such deformations $J$ over the ring $A$.

\subsection{The tangent space}
Let $\kr[\epsilon] = \kr[t]/(t^2)$. The deformations over
this ring are called first order deformations.
The tangent space of the deformation functor is 
$\Def{I}(\kr[\epsilon])$ (see below).
This space identifies as $\Hom_R(I,R/I)$. 
Let $\kvect$ be the category of finite-dimensional vector spaces.
For a vector space $V$ denote by $V^*$ its dual space.
There is a functor 
\[  \kvect \rarr \kartalg \]
sending $V$ to $\kV$. By abuse of notation we get a restricted
functor $\Def{I}$ from $\kvect$ where $\Def{I}(V) = \Def{I}(\kV)$.
We also have a functor
\[ \Hom_{\kr}(-, \Hom_R(I,R/I)) : \kvect \rarr \sets, \]
sending $V$ to the set of linear maps $V \rarr \Hom_R(I,R/I)$. 
The following is standard, see \cite{Ser} Proposition 3.2.1 and Definition 3.2.3.

\begin{proposition} \label{pro:DefoTangent}
There is a natural isomorphism of functors between the two functors
\[ \Def{I}, \, \, \Hom_{\kr}(-, \Hom_R(I,R/I)) : \kvect \rarr \sets .\]
\end{proposition}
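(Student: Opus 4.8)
The statement to prove is Proposition \ref{pro:DefoTangent}: the natural isomorphism of functors $\Def{I} \iso \Hom_\kr(-, \Hom_R(I, R/I))$ on $\kvect$. The plan is to construct the isomorphism pointwise on a vector space $V$, that is, to exhibit a bijection $\Def{I}(\kV) \iso \Hom_\kr(V, \Hom_R(I, R/I))$, and then check it is natural in $V$. First I would unwind the definition: an element of $\Def{I}(\kV)$ is an ideal $J \sus R \te_\kr \kV$ such that $(R \te_\kr \kV)/J$ is flat over $\kV$ and $J$ reduces to $I$ modulo the maximal ideal $V^* \sus \kV$. Since $(V^*)^2 = 0$, write $A = \kV$ with $\mm_A = V^*$, so $\mm_A^2 = 0$. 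The key point is that flatness over such a square-zero extension is equivalent to a liftability condition: an ideal $J$ lifting $I$ is $A$-flat iff the natural surjection $J \te_A \kr \to I$ is an isomorphism, equivalently iff $J \cap (R \te \mm_A) = I \te \mm_A$ inside $R \te A$. This is the standard fact (see \cite[Chap.3]{Ser}) that deformations over square-zero extensions are controlled by $\Hom$.

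Next I would construct the map in one direction. Given $J \in \Def{I}(A)$, for each generator $f \in I$ choose a lift $\tilde f \in J$; since $J$ reduces to $I$ mod $\mm_A$ and $R \te A = R \oplus (R \te \mm_A)$ as $R$-modules, we may write $\tilde f = f + g_f$ with $g_f \in R \te \mm_A \iso (R/I \oplus I) \te_\kr V^*$ — well, more precisely $g_f \in R \te_\kr V^*$, and the flatness condition forces the ambiguity in the lift $\tilde f$ (which lies in $J \cap (R\te \mm_A) = I \te \mm_A$) to land in $I \te_\kr V^*$. Thus the class $\overline{g_f} \in (R/I) \te_\kr V^*$ is well-defined, and $f \mapsto \overline{g_f}$ is $R$-linear and kills the relations among the $f$'s modulo $I$; one checks it defines an element of $\Hom_R(I, R/I) \te_\kr V^* \iso \Hom_\kr(V, \Hom_R(I, R/I))$, using finite-dimensionality of $V$. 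Conversely, given $\varphi \in \Hom_R(I, R/I) \te V^*$, lift each $\varphi(f)$ arbitrarily to $R \te V^*$ and set $\tilde f = f + (\text{that lift})$; let $J$ be the ideal generated by the $\tilde f$. One verifies that $J$ is independent of the choices (again using $\mm_A^2 = 0$ and $R$-linearity of $\varphi$), that $(R\te A)/J$ is $A$-flat by the liftability criterion, and that $J$ reduces to $I$. These two constructions are mutually inverse.

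Finally, naturality: a linear map $V \to W$ induces $\kr[W^*]/(W^*)^2 \to \kV$ hence a base-change map on deformation sets, and on the $\Hom$ side it induces the obvious map by functoriality of $\te V^*$; one checks the square commutes, which is immediate from the explicit formulas above since base change just applies the linear map to the coefficients $g_f \in R \te V^*$. The main obstacle — really the only substantive point — is verifying that the flatness of $(R\te A)/J$ over $A$ is correctly captured by the condition that the lifts $\tilde f$ can be chosen so that relations lift, i.e. translating between the geometric flatness statement and the algebraic liftability statement over a square-zero base; everything else is bookkeeping with the decomposition $R \te A = R \oplus (R \te \mm_A)$. Since this equivalence is exactly the content of \cite[Proposition 3.2.1]{Ser}, the proof can in fact be organized as a citation of that result together with the identification $\Hom_R(I,R/I) \te_\kr V^* \iso \Hom_\kr(V, \Hom_R(I,R/I))$ valid for finite-dimensional $V$, and a remark that naturality is built into that identification. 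I would present it this way: cite \cite{Ser} for the core bijection over a single $A$, then spend a sentence on the $\kvect$-functoriality.
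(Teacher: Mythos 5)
Your proposal matches the paper's treatment: the paper likewise invokes \cite[Prop.\ 3.2.1, Def.\ 3.2.3]{Ser} for the core identification and then spells out the two explicit mutually inverse constructions (a linear map $\phi : V \to \Hom_R(I,R/I)$ yields $J = \{f + \sum_i v_i^* \phi(v_i)(f)\}$, and conversely flatness of $J$ over $\kV$ gives the short exact sequence $0 \to V^* \te_\kr I \to J \to I \to 0$ from which one reads off the well-defined class of a lift in $V^* \te_\kr (R/I)$). Your account is a bit more verbose, and the aside about the flatness criterion could be tightened to the statement that flatness over $\kV$ gives precisely $J \cap (R \te_\kr V^*) = I \te_\kr V^*$, but the structure and substance are the same as the paper's.
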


The upshot is that the tangent space $\Def{I}(\kr[\epsilon])$ 
of the functor $\Def{I}$ on
$\kartalg$ identies as $\Hom_R(I,R/I))$.
We describe the isomorphism of functors in  more detail.

\medskip
\noindent 1. Choose a basis $v_1, \ldots, v_r$ for $V$. We obtain a dual basis
$v_1^*, \ldots, v_r^*$ for $V^*$.
A linear map $\phi : V  \rarr \Hom_R(I, R/I)$ gives 
an ideal
\[ J = \{ f + \sum_i v_i^* \phi(v_i)(f) \, | \, f \in I \}. \]
This is the flat deformation corresponding to $\phi$.

Alternatively $\phi$ gives an element of $V^* \te_\kr \Hom_R(I,R/I)$
and so a map
\[ \ov{\phi} : I \rarr I \te_\kr V^* \te_\kr \Hom_R(I, R/I) \rarr V^* \te_\kr R/I. \]
Then $J$ is the ideal
\[ J = \{ f + \ov{\phi}(f) \, | \, f \in I \}. \]

\noindent 
2. Conversely given a deformation $J \sus  R \te_\kr \kV$
of $I \sus R$, flat over $\kV$. Note first that 
the inclusion $\kr \rarr \kr[V^*]/(V^*)^2$ induces $R \rarr R \te_{\kr} \kV$,
and so the ideal $I$ embeds as a linear subspace of the right ring.
There is a short exact sequence
\[ 0 \rarr R \te_\kr V^* \rarr R \te_{\kr} \kr[V^*]/(V^*)^2 \rarr R \rarr 0. \]
Tensoring this with $ -  \te_{R \te \kr[V^*]/(V^*)^2} J$ we obtain,
recall that $J$ is flat, a short exact sequence
\[ 0 \rarr V^* \te_{\kV} J  \rarr J  \mto{\pi} J / (V^* \cdot J) = I \rarr 0.\]
The term $V^*  \te_{\kV} J$ 
identifies as $V^* \cdot J = V^* \te_{\kr} I$.
An $f\in I$ lifts by $\pi$ to an $\hat{f} = f + p$ where $p \in R \te_\kr V^*$. 
Any two liftings differ by an element of $V^* \te_\kr I$, and so we
get a well-defined $R$-module map $I \rarr V^* \te_{\kr} (R/I)$ sending
\[ f \mapsto p \in V^* \te_\kr (R/I). \] But such an $R$-module map
corresponds to a linear map $V \rarr \Hom_R(I, R/I)$. 

\medskip
\medskip
Now consider the situation of a subset $T$ of $\Hom_\kr(V^*, R) = V \te_\kr R$. 
It gives a map
$V^* \rarr R \te_\kr T^*$. This gives a map of algebras:
\[ \tau : R \te_\kr \kV \rarr R \te_\kr R \te_\kr \kr[T^*]/(T^{*})^2
\rarr R \te_\kr \kr[T^*]/(T^{*})^2, \]
where in the last map we have used the multiplication on $R$.

\begin{lemma} \label{lem:DecoTtoR} Let $J \sus R \te_\kr \kV$ be a 
flat deformation of $I \sus R$ over $\kV$. 
Then the image $J^\prime = \tau(J)$ is a flat deformation of $I$
over $\kr[T^*]/(T^{*})^2$. If the deformation $J$ corresponds 
to the linear map $V \rarr \Hom_R(I,R/I)$, then the deformation $J^\prime$ 
corresponds to 
the linear map $T \rarr V \te_\kr R \rarr \Hom_R(I, R/I)$, where
to define the latter map we have used the $R$-module structure on 
$\Hom_R(I,R/I)$. 
\end{lemma}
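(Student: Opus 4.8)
The plan is to reduce the statement to the explicit dictionary between flat square-zero deformations and linear maps recorded just above, in Proposition~\ref{pro:DefoTangent} and the two paragraphs following it, and then to carry out a single bookkeeping computation describing what $\tau$ does to the ``$\overline{\phi}$-part'' of $J$.

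First I would unwind the shape of $J$. Since $\kV$ has square-zero maximal ideal $\mathfrak n = V^*$, flatness of $(R\te_\kr\kV)/J$ over $\kV$ is equivalent to $J\cap(R\te_\kr V^*) = V^*\te_\kr I$; choosing for each $f\in I$ a lift $\widehat f = f + p_f \in J$ with $p_f \in R\te_\kr V^*$ then gives $J = \{\, f + p_f \mid f\in I\,\} + V^*\te_\kr I$, where $p_f$ is determined modulo $V^*\te_\kr I$ and $f\mapsto (p_f \bmod V^*\te_\kr I)$ is the adjoint $\overline{\phi}:I\rarr V^*\te_\kr(R/I)$ of $\phi$. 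Next I would record the three elementary properties of $\tau$: it is a $\kr$-algebra map, it restricts to the identity on $R = R\te 1$, and for $r\in R$, $v^*\in V^*$ one has $\tau(r\te v^*) = \sum_{t\in T} r\,t(v^*)\te e_t^*$, where $\{e_t\}_{t\in T}$ is the standard basis of the free vector space on $T$ and $\{e_t^*\}$ the dual basis of $T^*$. In particular $\tau(R\te_\kr V^*)\sus R\te_\kr T^*$ and $\tau(V^*\te_\kr I)\sus T^*\te_\kr I$, so $\tau(J) = \{\, f+\tau(p_f)\mid f\in I\,\} + \tau(V^*\te_\kr I)$.

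The core step is to identify $\tau(p_f)$ modulo $T^*\te_\kr I$. Fix a basis $v_1^*,\dots,v_r^*$ of $V^*$ and write $\phi(v_i)=\psi_i\in\Hom_R(I,R/I)$; then one may take $p_f = \sum_i \widetilde{\psi_i(f)}\te v_i^*$ with $\widetilde{\psi_i(f)}\in R$ a lift of $\psi_i(f)$, so that $\tau(p_f) = \sum_{t\in T}\bigl(\sum_i \widetilde{\psi_i(f)}\,t(v_i^*)\bigr)\te e_t^*$. Under $\Hom_\kr(V^*,R)=V\te_\kr R$ an element $t\in T$ is $t=\sum_i v_i\te t(v_i^*)$, and the map $V\te_\kr R\rarr\Hom_R(I,R/I)$ built from the $R$-module structure on the target sends $t$ to $\chi_t:=\sum_i t(v_i^*)\,\psi_i$, with $\chi_t(f)=\sum_i t(v_i^*)\psi_i(f)\in R/I$. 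Hence $\sum_i \widetilde{\psi_i(f)}\,t(v_i^*)$ is a lift of $\chi_t(f)$, so $\tau(p_f)\equiv\sum_{t\in T}\chi_t(f)\te e_t^*\pmod{T^*\te_\kr I}$; that is, $\tau(p_f)$ is a lift of $\overline{\chi}(f)$, where $\chi:T\rarr\Hom_R(I,R/I)$, $t\mapsto\chi_t$, is exactly the composite $T\rarr V\te_\kr R\rarr\Hom_R(I,R/I)$ of the statement. Thus $\tau(J)$ has the form $\{\, f+q_f\mid f\in I\,\} + T^*\te_\kr I$ with $q_f$ a lift of $\overline{\chi}(f)$, which by the forward direction of the dictionary above is the flat deformation of $I$ over $\kr[T^*]/(T^*)^2$ corresponding to $\chi$.

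The one point to watch — and the only real obstacle — is that $\tau$ need not be surjective, so a priori $\tau(J)$ is only a subset, not visibly an ideal. If $\tau$ is onto (equivalently $R\te_\kr V^*\rarr R\te_\kr T^*$ is onto) one checks, by splitting that $R$-linear surjection of free modules, that $\tau(V^*\te_\kr I)=T^*\te_\kr I$, and then $\tau(J)$ is literally the ideal described above; otherwise I would simply pass to the ideal generated by $\tau(J)$, which contains $T^*\te_\kr I$ (multiply $f+\tau(p_f)$ by $e_t^*$ to get $e_t^* f$, and these span $T^*\te_\kr I$) together with all the elements $f+\tau(p_f)$, hence again equals $\{\, f+q_f\mid f\in I\,\}+T^*\te_\kr I$. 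Apart from this, the argument is a straightforward unwinding of definitions; the only thing one must not slip on is that the second map $V\te_\kr R\rarr\Hom_R(I,R/I)$ uses the genuine $R$-action on $\Hom_R(I,R/I)$, which is precisely what makes it match $\tau$'s behaviour on the products $r\cdot v^*$.
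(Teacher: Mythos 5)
Your argument is correct and is essentially the same as the paper's: both fix bases, write the deformation as $J=\{f+\sum_j v_j^*f_j\}+V^*\te_\kr I$ (the paper) or $\{f+p_f\}+V^*\te_\kr I$ (you), track what $\tau$ does to the $V^*$-part, and read off that the resulting ideal corresponds to the composite linear map $T\rarr V\te_\kr R\rarr\Hom_R(I,R/I)$, from which flatness follows by the forward direction of the dictionary in Proposition~\ref{pro:DefoTangent}. The one thing you add is the explicit remark that $\tau$ need not be surjective, so that $\tau(J)$ should be replaced by the ideal it generates (and your verification that this ideal still has the form $\{f+q_f\}+T^*\te_\kr I$ is correct); the paper silently treats $\tau(J)$ as an ideal, so your observation tightens the exposition but does not change the substance of the argument.
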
 

\begin{proof}
Let the $v_j$'s form a basis for $V$ and the $t_i$'s form
a basis for $T$. If the map $T \rarr V \te_{\kr} R$
is given by 
\[ t_i \mapsto \sum_j v_j \te_{\kr} r_{ij}, \]
then the map $V^* \rarr T^*\te_{\kr} R$ is given by 
\[ v_j^* \mapsto \sum_i t_i^* \te_{\kr} r_{ij}. \]
Let the  deformation $J$ be given by 
\[ f + \sum_j v_j^* f_j, \quad f \in I. \]
The linear map $V^* \rarr T^* \te_{\kr} R$ gives the
deformation
\[ f + \sum_j \sum_i (t_i^* \te_{\kr} r_{ij}) f_j, \quad f \in I \] 
and the ideal $J^\prime$ consists of 
\[ f + \sum_j \sum_i t_i^* r_{ij}f_j, \quad f \in I \] 
But this is the deformation corresponding to the linear map that sends each $t_i$ to the $R$-module map 
\begin{align*}  \sum_j r_{ij} v_j \, : \, \, &  I  \rarr  R/I & \\
& f  \mapsto  \sum_j r_{ij} f_j, & 
\end{align*}
and so is a flat deformation.
\end{proof}

\subsection{Deformations over polynomial rings}
Now assume the ring $R$ is a polynomial ring $\kr[X]$ where
$X$ is a finite-dimensional vector space.

A first order deformation of $I$ over $\kr[\epsilon]$ is {\it trivial} 
if it is the image of 
\[ I \te_\kr \kr[\epsilon] \sus \kr[X] \te_\kr \kr[\epsilon] \]
by an (infinitesimal) coordinate change in $\kr[X] \te_\kr \kr[\epsilon]$, meaning a linear map
\[ X \rarr \kr[X] \te_\kr \kr[\epsilon] \]
lifting the canonical inclusion $X \rarr \kr[X]$.
This linear map induces the infinitesimal coordinate change, 
an algebra homomorphism
\[ \kr[X] \te_\kr \kr[\epsilon] \rarr \kr[X] \te_\kr \kr[\epsilon]. \]

\begin{example}
Let $\kr[X] = \kr[x,y]$ and $I = (xy)$. The map 
\[ x \mapsto x + \epsilon y^3, \quad y \mapsto y + \epsilon xy \]
is an infinitesimal coordinate change. The ideal $I$ is
mapped to the ideal $(xy + \epsilon x^2y + \epsilon y^4)$ by
this coordinate change.
\end{example}

Let $\Der_\kr (\kr[X])$ be the derivations of $\kr[X]$. 
If the $x_i$ form a basis for $X$, 
this is a free $\kr[X]$-module generated by the derivatives
$\partial/\partial x_i$ for $i = 1, \ldots, n$.
There is a map
\begin{align} \label{eq:DecoDelta}
\delta^\ast :\Der_K(\kr[X]) & \to \Hom_{\kr[X]}(I,\kr[X]/I)
\end{align}
which sends $\partial$ to the homomorphism sending $f_i$ to $\partial f_i + I$.
The image of the homomorphism $\delta^\ast$ consists of all the trivial 
infinitesimal deformations of $\kr[X]/I$.

\medskip
There is a diagonal
map $X \rarr X \oplus X$ inducing
\[ \tau : \kr[X] \rarr \kr[X \oplus X] = \kr[X] \te_\kr \kr[X] \rarr
\kr[X] \te_\kr \kr[X]/(X)^2. \]

\begin{example}
Let $X$ have basis $x_1, x_2$ and let the other copy of $X$
have basis $t_1, t_2$. The image by $\tau$ 
of the ideal $(x_1^2x_2)$ in $\kr[x_1,x_2]$
is then the ideal generated by 
\[ (x_1 + t_1)^2(x_2 + t_2) = x_1^2x_2 + t_12x_1x_2 + t_2x_1^2. \]
\end{example}

The following characterizes the images of the partial derivatives
in a coordinate free way.

\begin{lemma} \label{lem:DefoDerivation}
Let $I \sus \kr[X]$ be an ideal. Then $\tilde{I} = \tau(I)$
is a flat deformation of $I$ over $\kr[X]/(X^2)$. 
The corresponding linear map
$X^* \rarr \Hom(I, \kr[X]/I)$ sends a dual basis element $x_i^*$ to the 
derivation $\partial/\partial {x_i}$.
\end{lemma}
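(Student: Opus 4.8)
The plan is to verify the three assertions in order: flatness of $\tilde I = \tau(I)$ over $\kr[X]/(X^2)$, the fact that the reduction mod $(X)$ recovers $I$, and finally the identification of the classifying linear map $X^* \rarr \Hom(I,\kr[X]/I)$ with the map sending $x_i^*$ to $\partial/\partial x_i$. First I would set up coordinates: fix a basis $x_1,\ldots,x_n$ for $X$ and a basis $t_1,\ldots,t_n$ for the second copy of $X$, so that $\kr[X\oplus X] = \kr[x_1,\ldots,x_n,t_1,\ldots,t_n]$ and $\tau$ is the composite $\kr[X]\rarr \kr[X\oplus X]\rarr \kr[X]\te_\kr \kr[X]/(X)^2$ sending $x_i\mapsto x_i + t_i$. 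Concretely, for $f\in\kr[X]$ one has the Taylor expansion
\[ \tau(f) = f(x_1+t_1,\ldots,x_n+t_n) \bmod (t)^2 = f + \sum_{i=1}^n t_i\,\frac{\partial f}{\partial x_i}, \]
since all higher-order terms lie in $(t_1,\ldots,t_n)^2$ and are killed. Thus if $f_1,\ldots,f_k$ generate $I$, then $\tilde I$ is generated by the elements $f_j + \sum_i t_i\,\partial f_j/\partial x_i$.

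For flatness I would invoke the framework already in place: by Subsection \ref{FamSubsecDefo} and the discussion after Proposition \ref{pro:DefoTangent}, a first-order deformation over $\kr[V^*]/(V^*)^2$ given in the form $f + \ov\phi(f)$ for a linear map $\phi:V\rarr \Hom_R(I,R/I)$ is automatically flat — this is exactly part (1) of the dictionary described there, and the isomorphism of functors in Proposition \ref{pro:DefoTangent} guarantees that every ideal of this shape is a flat deformation. Here $V^* = X$ (with basis $t_1,\ldots,t_n$, so $V$ has dual basis, identified with $X^*$, basis $x_1^*,\ldots,x_n^*$), and $\ov\phi(f_j) = \sum_i t_i\,(\partial f_j/\partial x_i + I)$. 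So flatness is not something to prove from scratch; it follows provided the map $f\mapsto (\partial f/\partial x_i + I)_i$ is a well-defined $R$-module homomorphism $I\rarr (R/I)^n$, i.e.\ descends to an element of $\Hom_R(I,R/I)\te_\kr X$. The reduction modulo $(X)$ (i.e.\ setting all $t_i = 0$, which is the map $\kr[X]\te_\kr \kr[X]/(X)^2 \rarr \kr[X]$) sends each generator $f_j + \sum_i t_i\,\partial f_j/\partial x_i$ back to $f_j$, hence $\tilde I\te_{\kr[X]/(X^2)}\kr \iso I$, giving the second required property.

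The only genuine point to check — and the one I'd flag as the main obstacle, though it is mild — is that the assignment $x_i^* \mapsto \partial/\partial x_i$ really is the linear map classifying $\tilde I$ under the bijection of Proposition \ref{pro:DefoTangent}. Here I would run the reconstruction recipe from step 2 of the discussion following that proposition: given the deformation $\tilde I$, an element $f\in I$ is lifted through $\pi$ to $\hat f = f + p$ with $p\in R\te_\kr V^* = R\te_\kr X$, and the classifying map sends $f$ to the image of $p$ in $(R/I)\te_\kr X$. Choosing $\hat f = \tau(f) = f + \sum_i t_i\,\partial f/\partial x_i$ gives $p = \sum_i t_i\,\partial f/\partial x_i$, whose image is $\sum_i x_i^*\te(\partial f/\partial x_i + I)$ under the identification $V^* \ni t_i \leftrightarrow x_i^*$; translating this element of $X^*\te_\kr\Hom_R(I,R/I)$ into a linear map $X^*\rarr \Hom_R(I,R/I)$ yields precisely $x_i^*\mapsto (f\mapsto \partial f/\partial x_i + I) = \delta^\ast(\partial/\partial x_i)$, as in \eqref{eq:DecoDelta}. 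The one subtlety is independence of the chosen lift: any two lifts of $f$ differ by an element of $V^*\te_\kr I$, so the induced element of $(R/I)\te_\kr X$ is well-defined, which is exactly what makes the recipe coherent — and this is already established in the cited discussion, so I would simply appeal to it rather than re-prove it.
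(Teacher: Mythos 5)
Your proposal is correct, and the verification that the classifying map sends $x_i^*$ to $\partial/\partial x_i$ (via the reconstruction recipe, step~2 after Proposition~\ref{pro:DefoTangent}) is actually more explicit than what the paper writes, where that half of the statement is compressed into ``the result now follows by the base change.''

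The flatness half, however, you prove by a genuinely different route than the paper. You verify that $\tau(I)$ has exactly the shape $\{\,f + \ov\phi(f)\,\}$ for the linear map $\phi : X^*\to\Hom_{\kr[X]}(I,\kr[X]/I)$ with $\phi(x_i^*)=\delta^*(\partial/\partial x_i)$, and then appeal to the first-order dictionary (step~1 after Proposition~\ref{pro:DefoTangent}) to conclude flatness over $\kr[X]/(X)^2$ directly. The paper instead works over the \emph{full} polynomial base: it views the coordinate change $x_i\mapsto x_i\oplus t_i$, $t_i\mapsto t_i$ as a $\kr[X]$-algebra automorphism of $\kr[X\oplus X]$ (over the second factor), notes that it carries the trivially flat $I\te_\kr\kr[X]$ to $\tilde I$, hence $\kr[X\oplus X]/\tilde I$ is flat over $\kr[X]$, and only then applies the base change $\kr[X]\to\kr[X]/(X)^2$. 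The paper's version is slightly more self-contained (automorphisms and base change both manifestly preserve flatness; no appeal to the bijection of Proposition~\ref{pro:DefoTangent} is needed for the flatness claim) and in fact establishes the stronger statement that $\tilde I$ is flat over the \emph{entire} polynomial ring $\kr[X]$, not just over the Artinian truncation. Your version leans on the step-1 side of the dictionary, and thus implicitly on the small check that $\delta^*(\partial/\partial x_i)$ is a well-defined element of $\Hom_{\kr[X]}(I,\kr[X]/I)$; you note this but do not spell it out, which is fine since the paper already records the map $\delta^*$ of \eqref{eq:DecoDelta}. Both arguments are valid; yours is a clean first-order argument, the paper's buys the stronger flatness-over-$\kr[X]$ statement for free.
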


\begin{proof}
By tensoring with $\kr[X]$ we get a flat (trivial)
deformation $I \te_{\kr} \kr[X]$ in $\kr[X] \te_{\kr} \kr[X]$
over $\kr[X]$. Denote the generators of the algebra
$\kr[X] \te_{\kr} \kr[X] = \kr[X \oplus X]$ by $x_i \oplus 0$ and 
$0 \oplus t_i$.
We now perform the coordinate change $x_i \mapsto x_i \oplus t_i$
and $t_i \mapsto t_i$. Note that this coordinate
change is a $\kr[X]$-module map for the inclusion $x_i \mapsto 0 \oplus t_i$.
The image of $I \te_{\kr} \kr[X]$ by this coordinate change is 
denoted $\tilde{I}$.
Since 
\[ (\kr[X]/I) \te_{\kr} \kr[X] \rarr \kr[X \oplus X]/\tilde{I}\]
is a $\kr[X]$-module isomorphism, the right side will be flat over
$\kr[X]$. The result now follows by the base change 
$\kr[X] \rarr \kr[X]/(X)^2$.
\end{proof}

The cokernel $\coker \delta^\ast$ of the map $\delta^\ast$ in 
\eqref{eq:DecoDelta} is called the 
{\it first cotangent module} of $\kr[X]/I$ and is denoted by $T^1(\kr[X]/I)$. 
Its elements are in
one-to-one correspondence with the non-trivial infinitesimal deformations of 
$\kr[X]/I$. 

\begin{definition} The ideal $I$ is a {\it rigid ideal} if 
$T^1(\kr[X]/I)$ vanishes or equivalently the map $\delta^*$ is surjective.
\end{definition}
This means that every deformation of $I$ comes from a coordinate change.
We show in Section \ref{sec:Rigid} that $J(2,P) \sus B(2,P)$ is a rigid ideal.

\subsection{The first cotangent cohomology for letterplace ideals}
\label{subsec:DecoLP}

We consider a finite poset $P$.
Let $S$ be the polynomial ring
$\kr[x_{[2]\times P}]$. Recall that we denote the variable
$x_{i,p}$ as $p_i$. The letterplace ideal $L = L(2,P)$ is generated by
all monomials $p_1 q_2$ where $p \leq q$. We shall compute the 
first cotangent cohomology group $T^1(S/L)$, or rather its minimal
generating set as an $S$-module. The general theory for doing this
for Stanley-Reisner rings is developed in \cite{AltCh-Cotang}.
The results for $T^1(\kr[x_{[2] \times P}/L(2,P))$ are however so simple that we do
it from first principles, without recalling technicalities in loc.cit.

If $J$ and $U$ are subsets of $P$ we say that
$U$ is an upper bound set for $J$ 
if for every $p \in J$ there is a $q \in U$
with $p \leq q$. Similarly if $F$ and $D$ are subset of $P$, we say
$D$ is a lower bound set for $F$ if for every $p \in F$ there is a $r \in D$
with $r \leq p$.
For $p \in P$ let $J(< p)$ (resp. $J(\leq p)$) be the order ideal of $P$
consisting of all $r \in P$ with $r < p$ (resp. $r \leq p$), and let $F(> p)$ 
(resp. $F(\geq p)$) be the order filter consisting of all $q \in P$ with 
$q > p$ (resp. $q \geq p$). 
  
The first cotangent cohomology $T^1(S/L)$ is the quotient of 
$\Hom_S(L, S/L)$ by the derivations $\partial/\partial x_{i,p}$ as
we range over the variables of $S$. 
The following describes the minimal generators of $T^1(S/L)$. It
is remarkable in that they come from maps in $\Hom_S(L, S/L)$ where only 
one monomial is mapped to something nonzero. Also all monomials
$p_1q_2 \in I$ where $p <q$ are always mapped to zero.

\begin{theorem} \label{thm:DefCotang}
Given $p \in P$. Let $U$ be a an inclusion
minimal subset of $P - F(\geq p)$ which is an upper bound set for 
$J(< p)$, and let 
$D$ be an inclusion minimal subset 
of $P - J(\leq p)$ which is a lower bound set for $F(> p)$. 
Suppose also that whenever $r \in D$ and $s \in U$, we do not have $r \leq s$.
Then the map
\[ \phi : L \rarr S/L \] sending
\[ p_1 p_2 \mapsto \underset{r \in D}{\Pi} r_1 \underset{s \in U}{\Pi} s_2 \]
and all other minimal monomial generators in $L$ to zero, is a nonzero map of $S$-modules.

Moreover as we vary over $p \in P$ and sets $D$ and $U$ with this
property, the maps $\phi$ form a minimal generating set of the
cotangent cohomology $T^1(S/L)$.
\end{theorem}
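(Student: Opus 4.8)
\emph{Setup.} The plan is to compute everything one $\ztop$-graded degree at a time, where $x_{i,p}$ has degree $p_i$; write $\mathbf{e}_{i,p}=\deg x_{i,p}$. In each multidegree $\bfa$ the space $(S/L)_\bfa$ is $0$ or spanned by a single monomial, so a homogeneous $\phi\in\Hom_S(L,S/L)$ of degree $\bfa$ amounts to a scalar $\lambda_i\in\kr$ for each minimal generator $m_i=p_1q_2$ with $(S/L)_{\bfa+\deg m_i}\neq 0$ (and $\lambda_i=0$ is forced otherwise), subject to the Taylor (Koszul) relations, which say $\lambda_i=\lambda_j$ whenever $(S/L)_{\bfa+\deg\lcm(m_i,m_j)}\neq 0$. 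Similarly $\im\delta^\ast$ in degree $\bfa$ is spanned by the images of the monomial derivations $x^{\bfa+\mathbf{e}_{i,p}}\,\partial/\partial x_{i,p}$ with $\bfa+\mathbf{e}_{i,p}\geq 0$. So the statement turns into a finite combinatorial bookkeeping.

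\emph{The map $\phi$ is a well-defined nonzero class.} I would first verify the Taylor relations pairing $p_1p_2$ with any other generator $a_1b_2$: if $a<p=b$ the multiplier on the $p_1p_2$-side is $a_1$, and $a_1\prod_{r\in D}r_1\prod_{s\in U}s_2\in L$ since $a\in J(<p)$ lies below some $s\in U$; if $p=a<b$ the multiplier is $b_2$, which lands in $L$ since $b\in F(>p)$ lies above some $r\in D$; in all other cases the multiplier on the $p_1p_2$-side is already a monomial generator of $L$, so the relation holds trivially (relations not involving $p_1p_2$ read $0=0$). Thus $\phi$ is a homomorphism, and it is nonzero exactly because $g:=\prod_{r\in D}r_1\prod_{s\in U}s_2\notin L$, i.e.\ because no $r\in D$ lies below any $s\in U$. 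That $[\phi]\neq 0$ in $T^1$ I would get from a degree count: $\deg\phi=\sum_{r\in D}\mathbf{e}_{1,r}+\sum_{s\in U}\mathbf{e}_{2,s}-\mathbf{e}_{1,p}-\mathbf{e}_{2,p}=:\bfa$, and since $p\notin D$ (as $D\sus P-J(\leq p)$) and $p\notin U$ (as $U\sus P-F(\geq p)$), the vectors $\bfa+\mathbf{e}_{1,p}$ and $\bfa+\mathbf{e}_{2,p}$ each have a negative entry (at $x_{2,p}$ and at $x_{1,p}$ respectively); hence there is no degree-$\bfa$ derivation along $\partial/\partial x_{1,p}$ or $\partial/\partial x_{2,p}$, while every other derivation kills $p_1p_2$, so no trivial deformation of degree $\bfa$ can agree with $\phi$ on $p_1p_2$.

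\emph{Generation.} Given a nonzero homogeneous $[\phi]\in T^1_\bfa$, the crux is a reduction: \emph{modulo $\im\delta^\ast$ one may take $\phi$ supported on the diagonal generators $a_1a_2$ alone}. Granting this, decompose $\phi=\sum_a\phi_a$, where $\phi_a$ sends $a_1a_2$ to a monomial $h_a\notin L$ and kills the rest; each $\phi_a$ is again a homomorphism (its Taylor relations follow from those of $\phi$, using that $\phi$ vanishes off the diagonal and that multipliers pairing two diagonal generators lie in $L$). The Taylor relations of $\phi_a$ against $c_1a_2$ $(c<a)$ and $a_1d_2$ $(d>a)$ force $U_a:=\{s:x_{2,s}\mid h_a\}$ to be an upper bound set for $J(<a)$ and $D_a:=\{r:x_{1,r}\mid h_a\}$ to be a lower bound set for $F(>a)$; then $h_a\notin L$ forbids any $r\in D_a$ below any $s\in U_a$, which in turn gives $U_a\cap F(>a)=\emptyset$ and $D_a\cap J(<a)=\emptyset$; and if $x_{2,a}\mid h_a$ then $\phi_a=\delta^\ast((h_a/x_{2,a})\,\partial/\partial x_{1,a})$ is trivial, symmetrically if $x_{1,a}\mid h_a$. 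So, after dropping trivial summands, $U_a\sus P-F(\geq a)$ and $D_a\sus P-J(\leq a)$; shrinking to inclusion-minimal bound sets $U'\sus U_a$, $D'\sus D_a$ and factoring $h_a=x^{\mathbf{c}}\prod_{r\in D'}r_1\prod_{s\in U'}s_2$ exhibits $\phi_a=x^{\mathbf{c}}\cdot\phi_{(a,D',U')}$ as an $S$-multiple of one of the maps of the statement. Hence $[\phi]$ is an $S$-combination of the classes $[\phi_{(p,D,U)}]$.

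\emph{Minimality, and the main obstacle.} Distinct data $(p,D,U)$ give maps of distinct degree — the degree determines $p$ as the unique slot that is $-1$ in both the $x_{1,\cdot}$- and $x_{2,\cdot}$-blocks, hence determines $D$ and $U$ as the $+1$-slots — so the classes $[\phi_{(p,D,U)}]$ sit in pairwise distinct graded pieces of $T^1$; by graded Nakayama it only remains to see that each lies outside $\mm T^1$. If $[\phi_{(p,D,U)}]=\sum_i x^{\mathbf{c}_i}[\phi_i]$ with every $\mathbf{c}_i\neq 0$ (the $\phi_i$ being maps of the statement, as supplied by the Generation step), then evaluating on $p_1p_2$ writes the squarefree monomial $g$ as a single monomial $x^{\mathbf{c}_i}g_i$ coming from one of those maps; comparing supports forces the corresponding data to be a proper bound set inside $D$ (or inside $U$), contradicting inclusion-minimality unless $\mathbf{c}_i=0$. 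Thus the $\phi_{(p,D,U)}$ form a minimal generating set. The one genuinely hard point is the reduction used in the Generation paragraph — that the ``off-diagonal part'' of every homogeneous homomorphism lies in $\im\delta^\ast$. I expect to prove it by an induction on the poset that plays the ``row'' derivations $\partial/\partial x_{1,c}$ against the ``column'' derivations $\partial/\partial x_{2,q}$, exploiting that $(S/L)_{\bfa+\deg m}$ can be nonzero for an off-diagonal generator $m=c_1q_2$ only when $\bfa$ already has a $-1$ in slot $x_{1,c}$ or in slot $x_{2,q}$, which severely restricts where such a $\phi$ can be nonzero.
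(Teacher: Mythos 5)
You have the right global strategy, and several of the steps are handled well: the verification that $\phi$ is a well-defined $S$-homomorphism (checking the off-diagonal Taylor relations are trivial and the linear ones follow from the bound-set conditions), the degree-counting argument that $[\phi]\neq 0$ in $T^1$, and the minimality argument via graded Nakayama plus support comparison. The latter two points are, in fact, spelled out more carefully here than in the paper's own (rather terse) proof. However, there is a genuine gap at exactly the step you flag yourself: the claim that \emph{modulo $\im\delta^\ast$ every homogeneous homomorphism can be taken to be supported on the diagonal generators $p_1p_2$}. You offer only a sketch of an ``induction on the poset'' resting on the assertion that $(S/L)_{\bfa+\deg m}$ can be nonzero for an off-diagonal $m = c_1q_2$ only if $\bfa$ has a $-1$ in slot $x_{1,c}$ or $x_{2,q}$. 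That assertion is not correct: nonvanishing of $(S/L)_{\bfa+\deg m}$ only forces $\bfa_{x_{1,c}}\geq -1$ and $\bfa_{x_{2,q}}\geq -1$ (and $\bfa\geq 0$ elsewhere); there is no reason $\bfa$ must actually hit $-1$ there. In degrees with $\bfa\geq 0$ componentwise the degree count says nothing, and every generator of $L$ is a candidate to have nonzero image, so the heuristic cannot deliver the reduction on its own.

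In the paper this reduction is precisely the content of Lemma~\ref{lemma-01} together with part (1) of Lemma~\ref{lemma-02}, and both are genuine syzygy arguments rather than degree counts. Lemma~\ref{lemma-01} shows that, by subtracting $cm\,\partial/\partial q_2$ (respectively $\partial/\partial p_1$), any term of $\phi(p_1q_2)$ divisible by $q_2$ (respectively $p_1$) can be removed; the nontrivial point is to verify, via the syzygy $r_1(p_1q_2) - p_1(r_1q_2)$, that this subtraction is consistent across all generators containing $q_2$. Lemma~\ref{lemma-02}(1) goes the other way: using the syzygies $p_2(p_1q_2)-q_2(p_1p_2)$ and $q_1(p_1q_2)-p_1(q_1q_2)$ it shows that for an off-diagonal generator $p_1q_2$ with $p<q$, any surviving monomial $m$ in $\phi(p_1q_2)$ must share a factor with $p_1q_2$. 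Combining: after the Lemma~\ref{lemma-01} clean-up the off-diagonal images must simultaneously be divisible by $p_1$ or $q_2$ and be divisible by neither, hence vanish. That dichotomy is the real work in the paper's proof, and your proposal leaves it unproven; everything downstream of that point in your write-up is sound, but without this lemma the argument is incomplete.
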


Note that $D$ and $U$ above will both be antichains. We denote the
map above as 
\[ \phi = (p_1 p_2 \mapsto \underset{r \in D}{\Pi} r_1 
\underset{s \in U}{\Pi} s_2) . \]
Before proving the above theorem we develop some lemmata.


\begin{lemma}
\label{lemma-01}
Let $\phi \in \Hom_S(L,S/L)$.
Modulo $\im \delta^\ast$ the homomorphism $\phi$ is equal to a homomorphism $\phi'$ with the property that for any $p\leq q$ in $P$, 
$\phi'(p_1q_2)$ 
can be written as a linear combination of monomials $m$ not 
divisible by $p_1$ or $q_2$.
\end{lemma}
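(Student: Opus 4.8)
The plan is to construct $\phi'$ as $\phi-\delta^\ast(\theta)$ for a suitable derivation $\theta$, obtained by cleaning the images $\phi(p_1q_2)$ of the generators of $L$ one monomial at a time. First I would record the effect of the elementary derivations: $\delta^\ast(g\,\partial/\partial q_2)$ sends the generator $r_1q_2$ to $gr_1+L$ for every $r\leq q$ and every other generator of $L$ to $0$, while $\delta^\ast(g\,\partial/\partial p_1)$ sends $p_1s_2$ to $gs_2+L$ for every $s\geq p$ and every other generator to $0$. Thus a derivation can only contribute to $\phi(p_1q_2)$ monomials divisible by $p_1$ (via $\partial/\partial q_2$-terms) or by $q_2$ (via $\partial/\partial p_1$-terms) --- exactly the monomials we want to remove.

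Now fix reduced representatives, writing each $\phi(p_1q_2)$ as a $\kr$-linear combination of monomials not in $L$. Suppose a monomial $c\,p_1g$ with $c\neq 0$ occurs in $\phi(p_1q_2)$; as it is not in $L$, the factor $g$ is not divisible by $q_2$. The crucial point is that for every $r\leq q$ with $r_1g\notin L$, the monomial $r_1g$ occurs in the reduced representative of $\phi(r_1q_2)$ with the same coefficient $c$. This follows by applying $\phi$ to the Koszul relation $r_1\cdot(p_1q_2)=p_1\cdot(r_1q_2)$, which gives $r_1\phi(p_1q_2)\equiv p_1\phi(r_1q_2)\pmod L$: since $p_1r_1g\notin L$ (adjoining an index-$1$ variable to $p_1g\notin L$ cannot land in $L$ unless $r_1g$ already does, because $L$ is generated by products of one index-$1$ and one index-$2$ variable), comparing the coefficient of $p_1r_1g$ on the two sides forces the claim. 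Granting this, subtract $c\cdot\delta^\ast(g\,\partial/\partial q_2)$ from $\phi$: this deletes $c\,p_1g$ from $\phi(p_1q_2)$, deletes $c\,r_1g$ cleanly from $\phi(r_1q_2)$ whenever $r_1g\notin L$ (the coefficient there is exactly $c$), has no effect where $r_1g\in L$, and leaves all $\phi(t_1s_2)$ with $s\neq q$ untouched. Hence the number of monomials of the shape $p_1g$ occurring in the various $\phi(p_1q_2)$ strictly decreases and nothing new appears, so the process terminates with every $\phi(p_1q_2)$ having a representative free of $p_1$. The symmetric argument, using the relations $s_2\cdot(p_1q_2)=q_2\cdot(p_1s_2)$ and the derivations $\delta^\ast(g\,\partial/\partial p_1)$, then removes all monomials divisible by $q_2$; these derivations only add to the various $\phi(p_1s_2)$ monomials divisible by some $s_2$ and never by $p_1$ (a $p_1$-divisible contribution would already lie in $L$), so the first pass is not disturbed. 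The composite map is the desired $\phi'$.

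The step I expect to be the main obstacle is exactly this bookkeeping: a single elementary derivation alters $\phi$ simultaneously on many generators, so one must control the contributions it makes elsewhere. The coefficient-matching claim extracted from the Koszul relations is what makes those contributions cancel existing terms (or vanish in $S/L$) rather than create new obstructions, and it is also what guarantees termination of the cleaning procedure; stating and exploiting this claim correctly is the technical heart of the lemma.
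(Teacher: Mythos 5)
Your proof is correct and follows the same strategy as the paper's: use the Koszul syzygies $r_1(p_1q_2)-p_1(r_1q_2)$ to match coefficients across the generators $r_1q_2$, then subtract the elementary derivations $cm\,\partial/\partial q_2$ to remove $p_1$-divisible terms simultaneously and cleanly, and repeat symmetrically for $q_2$. You supply somewhat more bookkeeping than the paper (in particular, the observation that the second pass cannot reintroduce $p_1$-divisible monomials, which the paper leaves implicit under ``the proof... is similar''), but the key coefficient-matching argument and use of $\delta^\ast$ are identical.
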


\begin{proof}
Suppose $\phi(p_1q_2)$ contains a nonzero term of the form 
$cp_1m$ for some monomial $m$ and constant $c$.
We show that modulo $\im \delta^\ast$ we can eliminate this term.
More precisely we show that for any generator of $L$ of form $r_1q_2$, for some $r\leq q$, $\phi(r_1q_2)$ contains the term $cr_1m$. 
Therefore by subtracting $cm \frac{\partial}{\partial q_2}$ from $\phi$ we can eliminate the terms $cr_1m$ without adding any extra terms to $\phi(r_1q_2)$.

Suppose $r_1 q_2$ is a generator of $L$ that contains $q_2$. The syzygy $r_1(p_1q_2) - p_1(r_1q_2)$ induces the relation $r_1 \phi(p_1q_2) - p_1\phi(r_1q_2)$.
The term $cr_1p_1m$ either belongs to $L$ or for some 
$m'$ in $\phi(r_1q_2)$ it cancels out by the term $cp_1 m'$.
If $r_1p_1m$ is in $L$ then $r_1m \in L$. If it cancels out then $m' = cr_1 m$. 
Therefore in either case $\phi(r_1q_2)$ contains $cm \frac{\partial}{\partial q_2}(r_1q_2)$.
Note that if for some generator $g$ of $L$, $q_2 \nmid g$ then $m \frac{\partial}{\partial q_2}(g) = 0$.
The proof for the case where $\phi(p_1q_2)$ contains a term of the form $mq_2$ 
is similar.
\end{proof}

For any homomorphism $\phi \in \Hom_S(L,S/L)$, the following lemma shows which monomials can appear in the image of generators of $L$.

\begin{lemma}
\label{lemma-02}
Let $\phi \in \Hom_S(L,S/L)$.
For $p \leq q$ let $p_1 q_2$ be a generator of $L$.
We may write $\phi(p_1 q_2)$ as a linear combination of monomials $m$ 
that are not in $L$.
\begin{enumerate}
\item If $p \neq q$ then $\gcd(p_1q_2, m) \neq 1$.
\item If $p = q$ then either $\gcd(p_1p_2, m) \neq 1$ or, 
for all $r < p$ there is $t \geq r$ such that $t_2 | m$ and 
for all $r' > p$, there is $s \leq r'$ such that $s_1 | m$.
\end{enumerate}
\end{lemma}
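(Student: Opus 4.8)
The plan is to analyze, for a fixed generator $p_1q_2$ of $L$, what constraints the module structure of $\Hom_S(L, S/L)$ places on a single monomial $m$ appearing with nonzero coefficient in $\phi(p_1q_2)$. First I would dispose of the trivial reduction: since $\phi$ takes values in $S/L$, we may represent $\phi(p_1q_2)$ by a $\kr$-linear combination of monomials not lying in $L$, so throughout we assume $m \notin L$. The key tool in both parts is to exploit syzygies of $L$ of the shape $u \cdot (p_1 q_2) - v \cdot (\text{another generator})$ (exactly as in the proof of Lemma \ref{lemma-01}): applying $\phi$ gives a relation $u \phi(p_1q_2) - v\phi(g) \in L$, and comparing the monomial $u\cdot m$ against $L$ and against the monomials of $v\phi(g)$ forces structural information on $m$.

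For part (1), suppose $p \neq q$, so $p < q$ in $P$, and suppose toward contradiction that $\gcd(p_1q_2, m) = 1$, i.e.\ neither $p_1$ nor $q_2$ divides $m$. I would pick any element $r$ with $p \leq r$ (e.g.\ $r = q$, or more cleverly a sibling/child chosen to make the cancellation impossible) and use the syzygy $r_1 (p_1 q_2) - p_1 (r_1 q_2)$, together with syzygies multiplying $p_1 q_2$ by variables $s_1$ with $s \leq q$ and by variables $q'_2$ with $q' \geq p$. The relation $r_1 \phi(p_1 q_2) - p_1 \phi(r_1 q_2) \in L$ shows $r_1 m$ is either in $L$ or cancels against a term of $p_1 \phi(r_1 q_2)$; iterating this over a cofinal/coinitial family of such $r$ and keeping track of which variables we are allowed to multiply by, one shows that $m$ coprime to $p_1 q_2$ propagates a "ghost" monomial $m$ into the image of generators it cannot legitimately touch, ultimately forcing $m \in L$ — contradiction. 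The cleanest route is probably: if $p_1 \nmid m$ then using $s_1(p_1q_2)$ for all $s$ with $p \prec s$ (children of $p$) and the minimality of monomial generators forces $q_2 \mid m$, and symmetrically if $q_2 \nmid m$ then $p_1 \mid m$; combining gives $\gcd \neq 1$.

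For part (2), with $p = q$, I would run the same syzygy bookkeeping but now note that the two "sides" decouple. If $p_1 \nmid m$: for every child (indeed every $r$ with $r < p$) one uses the syzygies involving $x_{1,\bullet}$ to push $m$ into images of generators $r'_1 p_2$; the obstruction to this landing in $L$ is precisely the requirement that for each $r < p$ some variable $t_2$ with $t \geq r$ divides $m$ (this is the upper-bound-set condition on the $x_2$-support of $m$ relative to $J(<p)$). Symmetrically, if $p_2 \nmid m$, the $x_{2,\bullet}$-syzygies force that for each $r' > p$ some $s_1$ with $s \leq r'$ divides $m$. Hence either $\gcd(p_1p_2, m)\neq 1$, or both $p_1 \nmid m$ and $p_2 \nmid m$ and then both divisibility conditions hold simultaneously, which is exactly the stated alternative.

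The main obstacle I expect is the careful combinatorial control in part (2): one must argue that the propagation argument genuinely produces the *existential* statements ("there exists $t \geq r$ with $t_2 \mid m$") rather than something weaker or stronger, and that the two halves of the argument — the $x_1$-side and the $x_2$-side — can be run independently without interfering, since $p_1 p_2$ sits at the "diagonal" where a monomial could a priori be forced to be divisible by both $p_1$ and $p_2$. Handling the boundary cases (when $J(<p)$ or $F(>p)$ is empty, i.e.\ $p$ minimal or maximal) and ensuring $m \notin L$ is preserved at each cancellation step are the routine-but-delicate points; the rest is a direct translation of syzygies into monomial divisibility constraints.
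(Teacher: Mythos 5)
Your overall plan---exploiting syzygies of the form $u\cdot(p_1q_2)-v\cdot g$ and reading off divisibility constraints on a monomial $m$ in $\phi(p_1q_2)$---is exactly the paper's strategy, and your treatment of part (2) matches the paper's in essentials (decoupling the $x_1$-side and $x_2$-side and running the two linear-syzygy arguments independently).

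However, your sketch of part (1) has a real gap. The paper's argument is a reductio with a specific two-syzygy structure: assuming $\gcd(p_1q_2,m)=1$, it uses $p_2(p_1q_2)-q_2(p_1p_2)$ to conclude that $s_1\mid m$ for some $s\leq p$ (the cancellation case would force $q_2\mid m$, excluded; and the $p_2m\in L$ case forces $p_2=t_2$ in a generator $s_1t_2$, hence $s\leq p$), and it uses $q_1(p_1q_2)-p_1(q_1q_2)$ to conclude that $t_2\mid m$ for some $t\geq q$. The crux is then that $s\leq p\leq q\leq t$, so $s_1t_2$ is a generator of $L$ dividing $m$, contradicting $m\notin L$. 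Your proposal never identifies these two syzygies nor, crucially, the combining step that produces the contradiction. Your ``cleanest route'' alternative---that $p_1\nmid m$ together with the child-syzygies $s_1(p_1q_2)-p_1(s_1q_2)$ forces $q_2\mid m$---does not follow: each such syzygy yields only ``$t_2\mid m$ for some $t\geq s$,'' with $t$ depending on $s$, and there is no mechanism to single out $q_2$; moreover these syzygies only exist when $s\leq q$, which ``children of $p$'' need not satisfy in a general poset. Your first-paragraph ``propagate a ghost monomial / iterate over a cofinal family'' gesture does name the right target ($m\in L$, contradiction), but it is not an argument. So the structure is right in spirit, but part (1) as written would not survive being made precise.
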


\begin{proof}
\noindent 1.  Suppose $p \neq q$ and $\gcd(p_1q_2,m) = 1$. 
We have a relation $p_2 \phi(p_1q_2) - q_2\phi(p_1p_2)$ in $S/L$ since 
$p_2(p_1q_2) - q_2(p_1p_2)$ is a syzygy of $L$. 
If $p_2 m$ is canceled with some monomial in $q_2\phi(p_1p_2)$ then $q_2| m$,
against our initial assumption above. 

Hence $p_2m\in L$, and this implies that there is some $s \leq p$ such that 
$s_1 | m$. 
In a similar manner the syzygy $q_1(p_1q_2) - p_1(q_1q_2)$ shows that there is 
some $t \geq q$ such that $t_2 | m$. So $s_1 t_2 | m$ for $s \leq t$ 
which is a contradiction. The upshot is that we must have $\gcd(p_1q_2,m)$
must be nontrivial.

\medskip
\noindent 2. Suppose $p = q$ and $\gcd(p_1p_2,m)=1$. Let $x(p_1p_2)-y(p'_1q'_2)$ be a  relation involving the generator $p_1p_2$, so
$y$ contains at least one of $p_1$ or $p_2$ (and $x$ does not).
If $xm$ is going to be canceled by some monomial in $y\phi(p'_1q'_2)$ then $p_1p_2$ and $m$ can not be relatively prime. Therefore if $\gcd(p_1p_2,m)=1$ then for any such syzygy $xm$ belongs to $L$. Note that if $x(p_1q_2)-y(p'_1q'_2)$ is not a linear syzygy then $xm$ belongs to $L$, so we only need to consider linear syzygies.
For any $r < p$ the linear syzygy $r_1(p_1p_2) - p_1(r_1p_2)$ implies $r_1m \in L$. Hence for some $t \geq r$, $t_2$ divides $m$. Similarly for any $r' > p$, $m$ should be divided by a variable $s_1$ for some $s\leq r'$.
\end{proof}

\begin{proof}[Proof of Theorem \ref{thm:DefCotang}]
Let $\phi\in \Hom(L,S/L)$ be a homomorphism. By Lemmata \ref{lemma-01} and 
\ref{lemma-02} we can decompose $\phi$ as $\phi=\phi_1+\phi_2$ where $\phi_1$ belongs to the image of $\delta^\ast$ and for any $p\in P$, all the monomials in $\phi_2(p_1p_2)$ are relatively prime to $p_1p_2$ and also for any $p < q$, $\phi_2(p_1q_2)=0$.
Now let $m$ be a nonzero monomial in $\phi(p_1p_2)$, let $\psi$ be a map that sends $p_1p_2$ to $m$ and any other generator of $L$ to zero. Since $m$ is relatively prime to $p_1p_2$ the second part of proof of \ref{lemma-02} shows that this map satisfies all the relations of $L$. Hence it is a well-defined homomorphism.
By Lemma \ref{lemma-02} (2), there exists some $U$ and $D$ as in \ref{thm:DefCotang} such that $\Pi_{r\in D} r_1 \Pi_{s\in U} s_2|m$.
Note that if $U$ contains an element $s$ in $F(\geq p)$ then $D$ contains an element $r$ such that $r\leq s$ and $m$ belongs to $L$ which is a contradiction. Hence $U\subseteq P - F(\geq p)$. Similarly $D\subseteq P - J(\leq p)$.
Therefore the homomorphisms in \ref{thm:DefCotang} generate $T^1(S/L)$.
\end{proof}

\begin{example}
Let $P$ be a poset with following Hasse diagram.
\begin{center}
 \begin{tikzpicture}[scale=1, vertices/.style={draw, fill=black, circle, inner sep=1.5pt}]
                \node [vertices, label=below:{$a$}] (0) at (-.75+0,0){};
                \node [vertices, label=below:{$b$}] (3) at (-.75+1.5,0){};
                \node [vertices, label=left:{$c$}] (1) at (-.75+0,1.33333){};
                \node [vertices, label=right:{$d$}] (2) at (-.75+1.5,1.33333){};
                \node [vertices, label=above:{$e$}] (4) at (-0+0,2.66667){};
        \foreach \to/\from in {0/1, 0/2, 1/4, 2/4, 3/1, 3/2}
        \draw [-,thick] (\to)--(\from);
  \end{tikzpicture}
\end{center}
For the point $a$, $J(<a) = \emptyset$ so the empty set is the only inclusion minimal subset of $P-F(\geq a)$ which is an upper bound for $J(<a)$. We also have $F(>a) = \{c,d,e\}$ and the inclusion minimal subsets of $P-J(\leq a)$ that are lower bounds for $F(>a)$ are $\{c,d\}$ and $\{b\}$.
Therefore for $a$, we have two first order deformations corresponding to maps
\[
a_1a_2 \mapsto c_1d_1, \qquad a_1a_2 \mapsto b_1.
\]
Consider the point $c$. $J(<c) = \{a,b\}$ and the minimal upper bounds in $P-F(\geq c)$ are $\{a,b\}$ and $\{
d\}$. We also have $F(>c) = \{e\}$ and the minimal lower bounds in $P-J(\leq c)$ are $\{d\}$ and $\{e\}$. Note that since $d_1d_2$ is in $L(2,P)$ we only have 3 maps corresponding to the point $c$ of $P$.
\[
c_1c_2 \mapsto a_2b_2d_1,\qquad c_1c_2 \mapsto a_2b_2e_1,\qquad c_1c_2 \mapsto d_2e_1.
\]
Finally, one can show that the first cotangent cohomology module is minimally generated by the following 11 maps.
\begin{align*}
& a_1a_2 \mapsto c_1d_1, \qquad a_1a_2 \mapsto b_1,\\
& b_1b_2 \mapsto c_1d_1, \qquad b_1b_2 \mapsto a_1,\\
& c_1c_2 \mapsto a_2b_2d_1, \quad c_1c_2 \mapsto a_2b_2e_1, \qquad c_1c_2 \mapsto d_2e_1,\\
& d_1d_2 \mapsto a_2b_2c_1, \quad c_1c_2 \mapsto a_2b_2e_1, \qquad c_1c_2 \mapsto c_2e_1,\\
& e_1e_2 \mapsto c_2d_2.
\end{align*}
\end{example}

\begin{corollary} \label{cor:CotangRP}
Suppose the Hasse diagram of $P$ is a rooted tree. Let $p$ in $P$
and $b^1, \ldots, b^m$ its children.
\begin{itemize}
\item Let $q$ be such that the meet of $q$ and $p$ is the parent of $p$.
The map sending $p_1p_1 \mapsto q_2 \prod_{i=1}^m b^i_1$ and
all other monomials to zero, is in $T^1(S/L)$.
\item Let $\rho$ be the root of $P$. The map sending $\rho_1\rho_2 \mapsto
\prod_{i = 1}^m b^i$, and all other monomials to zero, is in $T^1(S/L)$.
\end{itemize}
As $q$ and $p$ vary, these maps generate $T^1(S/L)$.
\end{corollary}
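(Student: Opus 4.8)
The plan is to obtain Corollary~\ref{cor:CotangRP} as a direct specialization of Theorem~\ref{thm:DefCotang}: I would work out, for each $p\in P$, precisely which pairs $(D,U)$ satisfy the hypotheses of that theorem, under the extra assumption that the Hasse diagram of $P$ is a rooted tree. Two structural observations drive everything. First, since every element has a unique parent, the order ideal $J(<p)$ is a \emph{chain}, namely $\rho<\cdots<a$ where $a$ is the parent of $p$ (and $J(<p)=\emptyset$ exactly when $p=\rho$); in particular $a$ is its unique maximal element. Second, the order filter $F(>p)$ is the disjoint union of the upward subtrees rooted at the children $b^1,\dots,b^m$ of $p$, so its set of minimal elements is exactly $\{b^1,\dots,b^m\}$ (and $F(>p)=\emptyset$ when $p$ is maximal).

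From these I would pin down the admissible $D$ and $U$. For $D$: any lower bound set of $F(>p)$ must contain, for each child $b^i$, some $r\le b^i$; but the elements below $b^i$ form the chain $\rho<\cdots<a<p<b^i$, whose only member lying outside $J(\le p)$ is $b^i$ itself, so necessarily $b^i\in D$. Hence $\{b^1,\dots,b^m\}\subseteq D$, and as this set is already a lower bound set for $F(>p)$, inclusion-minimality forces $D=\{b^1,\dots,b^m\}$ --- the unique admissible choice (empty if $p$ is maximal). For $U$: since $J(<p)$ is a chain with maximum $a$, every element $\ge a$ dominates all of $J(<p)$, so each inclusion-minimal upper bound set of $J(<p)$ is a singleton $\{q\}$ with $q\ge a$; imposing $\{q\}\subseteq P-F(\ge p)$ adds the condition $q\not\ge p$, which --- as $a$ and $p$ are the only members of $J(\le p)$ that are $\ge a$ --- amounts to saying that the meet $q\wedge p$ equals $a$, the parent of $p$. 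When $p=\rho$ instead $P-F(\ge\rho)=\emptyset$, so the only admissible choice is $U=\emptyset$.

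Finally, I would check the remaining hypothesis of Theorem~\ref{thm:DefCotang}, that $r\not\le s$ for all $r\in D$, $s\in U$: with $r=b^i$ and $s=q$, a relation $b^i\le q$ would give $p<b^i\le q$, contradicting $q\not\ge p$; so this condition holds automatically (and is vacuous once $D$ or $U$ is empty). Thus Theorem~\ref{thm:DefCotang} applies to exactly the data listed in the corollary: for non-root $p$ the map $p_1p_2\mapsto q_2\prod_{i=1}^{m}b^i_1$ for each $q$ with $q\wedge p$ the parent of $p$, and for the root the map $\rho_1\rho_2\mapsto\prod_{i=1}^{m}b^i_1$; the last assertion of that theorem then gives that, as $p$ and $q$ vary, these maps form a minimal generating set of $T^1(S/L)$. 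There is no genuine obstacle here beyond the combinatorial bookkeeping; the only point requiring care is the complete enumeration of the inclusion-minimal bound sets --- using the tree structure in both directions to see that $D$ is forced to be the full set of children of $p$ and that every inclusion-minimal $U$ is a singleton dominating the parent of $p$.
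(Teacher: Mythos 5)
Your proposal is correct and is precisely the intended argument: the paper offers no separate proof of Corollary~\ref{cor:CotangRP}, and it is meant to follow by specializing Theorem~\ref{thm:DefCotang} to a rooted tree, which is exactly what you do by showing that $D$ is forced to be the set of children of $p$ and that the inclusion-minimal $U$'s are the singletons $\{q\}$ with $q\wedge p$ the parent of $p$ (or $U=\emptyset$ when $p=\rho$). Your verification that the side condition $r\not\le s$ is automatic and your handling of the boundary cases ($p$ maximal, $p=\rho$) are both correct.
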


\section{Flatness of deformation family}
\label{sec:Flat}

We show that the ring $B(2,P)/J(2,P)$ is a flat deformation
of $\kr(2,P)/L(2,P)$ over the base ring $B = \kr[u_{\empt,\rot}, u_{q,p}]$. We do this in 
Theorem \ref{thm:Flat}, but before that we develop some auxiliary 
results.

\begin{proposition} \label{pro:FlatBasic} Given element $p, b,c \in P$
with $p \leq b$ and $p \leq c$. Then 
\[ S_p(b) c_2 - b_2 S_p(c) \]
is in $J(2,P)$. 
\end{proposition}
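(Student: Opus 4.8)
The plan is to prove this by induction, most naturally on $\depth(p)$, reducing the general statement to the defining relations of $J(2,P)$ and an inductive hypothesis for children of $p$. First I would clear denominators: recall $S_p(b) = R(p,b)D(b)^b$, where $R(p,b)$ is a product of the determinants $D(x)^y$ along the covering chain from $p$ to $b$. The key structural fact is the recursive shape of these $S_p$'s: if $b^1,\dots,b^m$ are the children of $p$, then for $q \geq p$ one has $S_p(q) = D(p)^{b^i} S_{b^i}(q)$ whenever $q \geq b^i$, and $S_p(p) = D(p)^p$. So I would first record this multiplicativity as a lemma (it is essentially immediate from the definition of $R(p,q)$). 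This lets me express everything in terms of the matrix $M(p) = [S_{b^i}T_{b^i}(b^j)]$ and its signed maximal minors $D(p)^{b^i}$, $D(p)^p$.

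Next, the heart of the matter. Write $b$ and $c$ as lying over children $b^i$ and $b^j$ of $p$ respectively (allowing the degenerate cases $b = p$ or $c = p$, where the corresponding column is the $u_{p,\cdot}$-column). Using the multiplicativity lemma and the inductive hypothesis $S_{b^i}(b)c^{(i)}_2 - b^{(i)}_2 S_{b^i}(c^{(i)}) \in J(2,P)$ applied over the child $b^i$ (for the subtree rooted at $b^i$), the problem collapses, modulo $J(2,P)$, to showing that
\[ D(p)^{b^i} S_{b^i}T_{b^i}(b^j) - D(p)^{b^j} S_{b^j}T_{b^j}(b^i) \in J(2,P) \]
for all $i,j$ (including $j = 0$, i.e.\ the $a$-column). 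But this is precisely a Laplace-type identity: the expression $\sum_k (-1)^k |M(p)^{b^k}| \cdot (\text{entry})$ along a row of $M(p)$ with a repeated column vanishes identically, giving linear relations among the $D(p)^{b^k}$ with the matrix entries $S_{b^k}T_{b^k}(b^j)$ as coefficients. I would combine this with the defining relations $p_1 q_2 - T(p)S_p(q)$ of $J(2,P)$ — note $b^j_1 = S_{b^j}T_{b^j}(b^j)$ appears on the diagonal of $M(p)$, and $T(p)$, $T_a(b^j)$, $T_c(b^j)$ are built from exactly the entries $-u_{p,b^j}$, $S_cT_c(b^j)$ of the first column and off-diagonal entries — so that the Laplace relation, after multiplying by appropriate $T$'s and $S$'s, becomes a combination of generators of $J(2,P)$. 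Concretely, $b_2 S_p(c) - c_2 S_p(b)$ should be rewritten, using $p_1 b_2 \equiv T(p)S_p(b)$ and $p_1 c_2 \equiv T(p) S_p(c)$ modulo $J(2,P)$, after multiplying through by $T(p)$ (or the relevant $u$'s when $p$ is the root or $T(p)$ is a single variable), into something that is manifestly a consequence of the minor identities among the columns of $M(p)$.

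The main obstacle I anticipate is bookkeeping the degenerate and boundary cases cleanly: when $b = p$ or $c = p$ (so one "column" is the parent column $-u_{p,b^j}$ rather than a diagonal $b^j_1$), when $p$ is the root (so $T(p) = u_{\emptyset,\rho}$ is a non-zerodivisor rather than a sum $-T_a(b) - \sum T_c(b)$), and when $b$ or $c$ lies deep in a subtree so that the reduction to children must be iterated. In each of these, multiplying by $T(p)$ or by a product of $u$'s is harmless for membership in $J(2,P)$ only if $B(2,P)/J(2,P)$ has no relevant zerodivisors, or else one must argue the identity on the nose without clearing; I would handle this by doing the entire computation \emph{formally} at the level of the symbolic expressions $S_{b^k}T_{b^k}(b^j)$ (which are literal polynomials once expanded), so that the minor identities are honest polynomial identities in $B(2,P)$ and no cancellation of denominators is needed. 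The positivity of the grading from Proposition~\ref{pro:Graded} is a useful sanity check throughout: both $S_p(b)c_2$ and $b_2 S_p(c)$ have degree $c_2 - \hat p + b_2$, and every generator $p_1q_2 - T(p)S_p(q)$ is homogeneous, so the claimed membership is at least consistent with the grading.
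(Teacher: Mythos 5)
Your high-level plan --- induction on $\depth(p)$, the multiplicativity $S_p(q) = D(p)^{b^i}S_{b^i}(q)$ coming from the definition of $R(p,q)$, and an appeal to Laplace expansion of $M(p)$ --- points in the right general direction and matches the spirit of the paper's proof. But the concrete reduction you propose is wrong. You claim the problem collapses, modulo $J(2,P)$, to showing $D(p)^{b^i}\,S_{b^i}T_{b^i}(b^j) - D(p)^{b^j}\,S_{b^j}T_{b^j}(b^i)\in J(2,P)$ for all $i,j$. That expression is not even homogeneous for the $\mathbb{Z}([2]\times P)$-grading: by Proposition \ref{pro:Graded}(3)--(4) the first term has degree $(\hat{b^i}-\hat p)+(b^j_1+\hat{b^j}-\hat{b^i})=b^j_1+\hat{b^j}-\hat p$, while the second has degree $b^i_1+\hat{b^i}-\hat p$, and these differ whenever $i\ne j$. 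You invoke the grading as a sanity check, but it refutes your own target. Moreover, neither homogeneous piece lies in $J(2,P)$: already for the two-child star $P=\{a<b,\,a<c\}$ one has $D(a)^b\,S_bT_b(c)=-u_{b,c}D(a)^b$, and since $a_1b_2 - u_{\emptyset,a}D(a)^b$ is a generator of $J(2,P)$, membership of $u_{b,c}D(a)^b$ would force $a_1b_2$ to vanish in the localization of $B(2,P)/J(2,P)$ at $u_{\emptyset,a}$, which is false (the quotient is a domain and $a_1b_2\neq 0$). The Laplace identity you have in mind, $\sum_{k=0}^m D(p)^{b^k}S_{b^k}T_{b^k}(b^j)=0$ for $j=1,\dots,m$, is a genuine polynomial identity, but it is an $(m+1)$-term relation; you cannot extract a two-term $J(2,P)$-membership from it.

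There is also a gap in how you plan to use the induction hypothesis. If $b\geq b^i$ and $c\geq b^j$ sit over different children, then $c$ is not in the subtree rooted at $b^i$, so $S_{b^i}(c)$ is undefined and the inductive statement ``$S_{b^i}(b)c^{(i)}_2 - b^{(i)}_2 S_{b^i}(c^{(i)})\in J(2,P)$'' has no clear meaning (the $(i)$-superscripts are never defined). The paper's actual argument is considerably more delicate. Writing $q,r$ for the children of $p$ below $b,c$ respectively, it starts from $D(p)^q S_q(b)c_2 - b_2 D(p)^r S_r(c)$, Laplace-expands $D(p)^q$ along column $r$ and $D(p)^r$ along column $q$ into the $(m-1)\times(m-1)$ minors $D(p)^{qr}_x$, and then adds and subtracts the cross terms $T_r(x)S_r(c)$ and $T_q(x)S_q(b)$. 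This splits the expression into three sums: two are handled by an auxiliary lemma (the paper's Lemma \ref{lem:FlatTS}, which uses the inductive hypothesis at the children $q,r$ of smaller depth to show $S_rT_r(x)c_2 - T_r(x)S_r(c)\in J(2,P)$), and the third, $S_q(b)S_r(c)\sum_x D(p)^{qr}_x(T_r(x)+T_q(x))$, requires rewriting $T_r(x)+T_q(x)=-T_p(x)-\sum_{b^i\ne q,r}T_{b^i}(x)$ and then a separate calculation with the minors (Lemmas \ref{lem:FlatSTT} and \ref{lem:FlatDT}). None of this falls out of the two-term Laplace claim; you would need to build precisely this kind of scaffolding to make the induction go through, and also handle the boundary case $b=p$ separately as the paper does.
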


We shall prove this by induction on $\depth(p)$. For the below
lemmata we assume the above proposition holds for a given $p$, and
the lemmata are consquences of this.

\begin{lemma} \label{lem:FlatTS} Assume Proposition \ref{pro:FlatBasic} holds
for a given $p$. 
Let $q$ be sibling of $p$ (possibly equal) and $b \geq p$. 
Then 
\begin{equation}
S_pT_p(q) b_2 - T_p(q) S_p(b) \label{eq:FlatTS}
\end{equation} is in $J(2,P)$. 
\end{lemma}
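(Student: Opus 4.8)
The plan is to split on whether $q=p$ or $q$ is a sibling of $p$ distinct from it, and in each case to reduce the assertion directly to facts already in hand: Definition \ref{def:FamilyST} in the first case, and Proposition \ref{pro:FlatBasic} (which we are allowed to assume for this $p$) in the second.

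If $q=p$, the statement is essentially tautological. By the symbolic conventions introduced before the inductive definitions, $S_pT_p(p)=p_1$ and, from the definition of $T(b)$, $T_p(p)=T(p)$. Hence $S_pT_p(q)\,b_2-T_p(q)\,S_p(b)$ becomes $p_1 b_2 - T(p)S_p(b)$, which is exactly one of the generators of $J(2,P)$ listed in Definition \ref{def:FamilyST}, legitimate since $p\le b$. So there is nothing to do in this case.

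The substantive case is $q$ a sibling of $p$ with $q\neq p$. Here I would unwind the definitions. First, $T_p(q)=-\sum_{t\geq p} t_2\,u_{t,q}$; note each $u_{t,q}$ is genuinely one of the variables of $B$, because for $t\geq p$ the meet of $t$ and $q$ is the common parent of the siblings $p$ and $q$, which is the parent of $q$. Next, since $S_p$ is linear on combinations of the $t_2$'s with $t\geq p$, we get $S_pT_p(q)=S_p(T_p(q))=-\sum_{t\geq p} u_{t,q}\,S_p(t)$. Substituting both expressions, the element in question becomes
\[
S_pT_p(q)\,b_2-T_p(q)\,S_p(b) \;=\; -\sum_{t\geq p} u_{t,q}\bigl(S_p(t)\,b_2 - t_2\,S_p(b)\bigr).
\]
Each summand $S_p(t)\,b_2 - t_2\,S_p(b)$ lies in $J(2,P)$ by Proposition \ref{pro:FlatBasic} applied to the same $p$, whose hypotheses hold because $t\geq p$ and $b\geq p$. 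Therefore the right-hand side is a $B(2,P)$-linear combination of elements of $J(2,P)$, hence lies in $J(2,P)$.

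The computation is routine; the only things needing care are bookkeeping. One must keep the two meanings of the symbol $S_pT_p(q)$ straight — the literal symbol $p_1$ when $q=p$, versus the genuine composite $S_p(T_p(q))$ when $q\neq p$ — check that the variables $u_{t,q}$ occurring really belong to $B$, and make sure Proposition \ref{pro:FlatBasic} is invoked only for the index $p$ for which it is being assumed, so that no circularity with its induction on $\depth(p)$ is introduced. Beyond this there is no real obstacle.
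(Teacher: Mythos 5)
Your proof is correct and takes essentially the same route as the paper: dispose of the case $q=p$ by recognizing $p_1 b_2 - T(p)S_p(b)$ as a generator from Definition \ref{def:FamilyST}, and in the case $q\neq p$ expand $T_p(q)=-\sum_{t\geq p} t_2 u_{t,q}$, pull $S_p$ through by linearity, and reduce to Proposition \ref{pro:FlatBasic} applied termwise. The only difference is that you spell out the bookkeeping (the split linear combination and the check that the $u_{t,q}$ really are ring variables), which the paper leaves implicit.
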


\begin{proof} 
If $p = q$, then \eqref{eq:FlatTS} is 
\[ p_1b_2 - T(p) S_p(b) \] and so is in $J(2,P)$ by definition.
So assume $p \neq q$. Let $T_p(q) = - \sum_{c \geq p} c_2u_{c,q}$.
Since 
\[ S_p(c) b_2 - c_2 S_p(b) \] is in $J(2,P)$ we immediately get the lemma.
\end{proof}

\begin{lemma} \label{lem:FlatSTT}
Assume Proposition \ref{pro:FlatBasic} holds for a given $p$. 
Let $q,r$ and $p$ be siblings (some possibly equal).
Then 
\[ S_pT_p(q) T_p(r) - T_p(q) S_pT_p(r) \]
is in $J(2,P)$. 
\end{lemma}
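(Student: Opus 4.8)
The plan is to derive this directly from Lemma \ref{lem:FlatTS}, which is available to us precisely because we are assuming Proposition \ref{pro:FlatBasic} for our fixed $p$. First I would record the elementary symmetry: the expression
\[ E(q,r) := S_pT_p(q)\,T_p(r) - T_p(q)\,S_pT_p(r) \]
lives in the commutative ring $B(2,P)$ and is visibly antisymmetric under interchanging $q$ and $r$, so $E(q,q) = 0$ and, when $q \neq r$, we may assume without loss of generality that $r \neq p$.

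Next I would simply unfold the definitions in the second slot. Since $r$ is a sibling of $p$ with $r \neq p$, we have $T_p(r) = -\sum_{c \geq p} c_2\, u_{c,r}$, and applying $S_p$ (extended linearly to such combinations) gives $S_pT_p(r) = -\sum_{c \geq p} S_p(c)\, u_{c,r}$. Substituting these two expansions into $E(q,r)$ and collecting the coefficient of each variable $u_{c,r}$ yields
\[ E(q,r) = -\sum_{c \geq p}\bigl(S_pT_p(q)\, c_2 - T_p(q)\, S_p(c)\bigr)\, u_{c,r}. \]
Each bracketed summand is exactly the expression \eqref{eq:FlatTS} of Lemma \ref{lem:FlatTS} with $b = c$: here $q$ is a sibling of $p$ (possibly equal to $p$) and $c \geq p$, so the hypotheses of that lemma are met. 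Hence every summand lies in $J(2,P)$, and therefore so does $E(q,r)$.

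I do not anticipate a genuine obstacle; the whole argument is a one-line expansion once Lemma \ref{lem:FlatTS} is in hand. The only things to be careful about are bookkeeping: verifying that each $u_{c,r}$ really is one of the variables of $B(2,P)$ — which holds because for $c \geq p$ and $r$ a sibling of $p$ the meet $c \wedge r$ equals the common parent $a$ of $p$ and $r$, i.e. the parent of $r$ — and noting that the degenerate case $q = p$ needs no separate treatment, since Lemma \ref{lem:FlatTS} explicitly permits $q = p$ (there the summand becomes the defining generator $p_1 c_2 - T(p) S_p(c)$ of $J(2,P)$).
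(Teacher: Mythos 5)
Your proposal is correct and follows essentially the same route as the paper: reduce by antisymmetry to the case $r \neq p$, expand $T_p(r)$ (and hence $S_pT_p(r)$) over the variables $u_{c,r}$, and recognize each coefficient as an instance of Lemma~\ref{lem:FlatTS}. The paper's proof is a terser version of exactly this argument, leaving to the reader the bookkeeping you spell out (antisymmetry to handle the degenerate cases, and the check that each $u_{c,r}$ is a legitimate variable of $B$).
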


\begin{proof} If all three are equal this clearly holds. 
Assume then that $p$ is distinct from either $q$ or $r$, say distinct
from $r$. Then $T_p(r) = - \sum_{b \geq p} b_2u_{b,r}$. The statement
then follows by Lemma \eqref{lem:FlatTS} above. 
\end{proof}

Let $b^1,\ldots,b^m$ be the children of $a = b^0$.
Let $b^i, b^j$ be elements of $\{ b^0, \ldots, b^m \}$ and 
$b^k$ element of $\{b^1, \ldots, b^m\}$. 
Let $M(a)^{b^i b^j}_{b^k}$ be the submatrix of $M(a)$ obtained
by deleting columns $i$ and $j$ and row $k$,
and define the signed determinant
\[ D(a)^{b^i b^j}_{b^k} = 
\begin{cases} (-1)^{i+j+k} |M(a)^{b^i b^j}_{b^k}| &  i < j \\

 (-1)^{i+j+k-1} |M(a)^{b^i b^j}_{b^k}| & i > j
\end{cases}
\]
More generally for a sequence of indices $\bi : i_0, \ldots, i_\ell$
from $0,1,\ldots,m$ let $|\bi|$ be its length $\ell+1$ and
$\sigma(\bi)$ the sign of the permutation  that puts them
in strictly increasing order. Similiarly for a sequence 
$\bk : k_1, \ldots, k_\ell$ from
$1,2,\ldots, m$, and let 
\[D(a)^{b^{\bi}}_{b^{\bk}} = 
(-1)^{|\bi| + |\bk| + \sigma(\bi) + \sigma(\bk)}
|M(a)^{b^{\bi}}_{b^{\bk}}|, \]
where the last matrix is obtained by deleting the columns from $\bi$
and the rows from $\bk$.
It may be verified that we may expand $D(a)^{b^{\bi}}_{b^{\bk}}$ along
a new row $r$ as
\[  D(a)^{b^{\bi}}_{b^{\bk}} = \sum_{c} S_c T_c(r) D(a)^{b^{\bi,c}}_{b^{\bk,r}}
\]
and similarly when expanding along a column.

Suppose now Lemma \ref{pro:FlatBasic} is proven for all $p$ with
$\depth(p) \leq n$. 
\begin{lemma} \label{lem:FlatDT} Let $\depth(a) \leq n+1$, and 
let $b,c,d$ be distinct children of $a$. 
\begin{itemize}
\item[1.] $\sum_{x  = b^1}^{b^m} D(a)^{bc}_x T_d(x)$ is in $J(2,P)$.
\item[2.] $\sum_{x  = b^1}^{b^m} D(a)^{bc}_x T_a(x)$ is in $J(2,P)$.
\item[3.] $\sum_{x  = b^1}^{b^m} D(a)^{ab}_x T_c(x)$ is in $J(2,P)$.
\end{itemize}
\end{lemma}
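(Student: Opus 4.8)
The three statements are all of the same flavour: a signed sum $\sum_x D(a)^{b^i b^j}_x \cdot (\text{something})_2 u_{\cdot,\cdot}$ lies in $J(2,P)$, where the "something" is an entry-type expression. My strategy is to recognize each sum as (up to sign) the Laplace-style cofactor expansion of a $3\times 3$ (or $(m{+}1)\times m$-type) "determinant with a repeated column or row" built from the matrix $M(a)$, so that the sum is the determinant of a matrix with two equal columns, hence equals zero \emph{in the quotient} $B(2,P)/J(2,P)$, not literally zero in $B(2,P)$. Concretely, for part 1, note $T_d(x) = -\sum_{q\ge d} q_2 u_{q,x}$ for a child $x$ of $a$, while the column of $M(a)$ indexed by $x=b^k$ has entries $S_{b^k}T_{b^k}(b^j)$. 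The point is that by Lemma \ref{lem:FlatTS} (which applies since we are assuming Proposition \ref{pro:FlatBasic} holds for all $p$ of depth $\le n$, and the children $x$ of $a$ have depth $\le n$), one has $S_xT_x(r)\, b_2 \equiv T_x(r)\, S_x(b)$ modulo $J(2,P)$; iterating this lets me "transport" the $q_2$ factors across the cofactor determinants $D(a)^{bc}_x$ and rewrite $\sum_x D(a)^{bc}_x T_d(x)$, modulo $J(2,P)$, as a cofactor expansion along the column $d$ of the $m\times(m+1)$ matrix $M(a)$ — i.e. as $\pm D(a)^{bc\,?}$ with the $d$-column inserted where the $c$-column (or $b$-column) was already deleted. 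Since $d$ is \emph{also} one of the columns that is present, this is a determinant with two copies of the $d$-column, hence zero.

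The mechanics I would carry out, in order: (i) fix notation — write out $M(a)$, the children $b^1,\dots,b^m$, $b^0=a$, and recall the cofactor expansion identity $D(a)^{b^{\bi}}_{b^{\bk}} = \sum_c S_cT_c(r)\, D(a)^{b^{\bi,c}}_{b^{\bk,r}}$ stated just before the lemma; (ii) for part 1, expand the putative combination and use Lemma \ref{lem:FlatTS} repeatedly to move the $q_2$'s (coming from $T_d(x) = -\sum_{q\ge d} q_2 u_{q,x}$) past the determinants $D(a)^{bc}_x$, turning $\sum_x D(a)^{bc}_x T_d(x)$ into $\pm$ (the $d$-column cofactor expansion of a minor of $M(a)$ that still contains the $d$-column among its columns), which vanishes because two columns coincide; (iii) parts 2 and 3 are the same computation with the roles shifted: in part 2 we expand along the $a$-column (index $0$) of $M(a)$, which is present in $M(a)^{bc}$, so again a repeated column; in part 3 we delete columns $a$ and $b$ and the sum over $x$ with $T_c(x)$ reinserts the $c$-column, which is present, giving a repeated column. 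In each case the vanishing is "two equal columns $\Rightarrow$ determinant $=0$", but performed modulo $J(2,P)$ after the Lemma \ref{lem:FlatTS} transport, so the conclusion is membership in $J(2,P)$ rather than an identity in $B(2,P)$.

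The main obstacle I anticipate is purely bookkeeping: getting the signs right in the cofactor-expansion identity when a column is \emph{reinserted} rather than deleted (the $\sigma(\bi)$, $(-1)^{|\bi|+|\bk|}$ factors in the definition of $D(a)^{b^{\bi}}_{b^{\bk}}$ must be tracked carefully), and making sure that each application of Lemma \ref{lem:FlatTS} is legitimate — i.e. that the relevant $S_xT_x$ is being applied to a linear combination of variables $r_2$ with $r\ge x$, which is exactly the setting in which $S_x$ was defined to be linear. A secondary subtlety is that $T_d(x)$, $T_a(x)$ involve sums over $q\ge d$ (resp. $q=a$), so I must confirm the transport Lemma \ref{lem:FlatTS} is applied to each $q_2$-term separately and then resummed; this is routine but needs to be spelled out so that the "$\equiv$ modulo $J(2,P)$" steps compose correctly. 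Once the transport is in place, the vanishing is immediate from the repeated-column principle, so the entire content of the lemma is the sign/legitimacy accounting in step (ii)–(iii).
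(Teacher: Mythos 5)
Your high-level instinct — expand via cofactor structure of $M(a)$ and exploit the $d$-column — is the right direction and matches the spirit of the paper's proof, but as written the plan has two real gaps. First, the transport step is not applicable in the form you state. Lemma \ref{lem:FlatTS} gives $S_pT_p(q')\,b_2 \equiv T_p(q')\,S_p(b) \pmod{J(2,P)}$ only when $b \geq p$. The factors $q_2$ you want to move, coming from $T_d(x) = -\sum_{q\geq d} q_2 u_{q,x}$, satisfy $q\geq d$, so they can only be commuted past the $d$-column entries $S_dT_d(\cdot)$, not past arbitrary entries $S_{b^k}T_{b^k}(\cdot)$ of the unexpanded cofactor $D(a)^{bc}_x$ with $b^k\neq d$. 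Thus "moving the $q_2$'s past the determinants $D(a)^{bc}_x$" is illegitimate as a block operation; you must first expand $D(a)^{bc}_x$ along its $d$-column, $D(a)^{bc}_x = \sum_{y\neq x} D(a)^{bcd}_{xy}\,S_dT_d(y)$, and only then is the commutation available — and this commutation, applied to each product $S_dT_d(y)\,T_d(x)$, is exactly Lemma \ref{lem:FlatSTT}, which your plan never invokes but which the paper uses directly.

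Second, the endgame "a determinant with two equal $d$-columns, hence $\equiv 0$" is circular as stated, and when made rigorous gives a strictly weaker conclusion. The literal repeated-column determinant corresponds to $\sum_x D(a)^{bc}_x\,S_dT_d(x)$, which is identically zero in $B(2,P)$, not merely mod $J$; so your claimed congruence $\sum_x D(a)^{bc}_x\,T_d(x)\equiv \sum_x D(a)^{bc}_x\,S_dT_d(x)$ is precisely the assertion to be proved. If instead one carries out the transport carefully after expanding along the $d$-column, the double sum over $(x,y)$ yields $\sum_x D(a)^{bc}_x\,T_d(x) \equiv -\sum_x D(a)^{bc}_x\,T_d(x)$, i.e. $2\sum_x D(a)^{bc}_x\,T_d(x)\in J(2,P)$, which fails in characteristic $2$. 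The paper avoids both problems by grouping the double sum into unordered pairs $\{x,y\}$ with $x<y$ and using the antisymmetry $D(a)^{bcd}_{yx}=-D(a)^{bcd}_{xy}$, reducing to $\sum_{x<y}D(a)^{bcd}_{xy}\bigl[S_dT_d(y)T_d(x)-S_dT_d(x)T_d(y)\bigr]$, each bracket lying in $J(2,P)$ by Lemma \ref{lem:FlatSTT}. A further small inaccuracy: in part 2 the bracket vanishes identically in $B(2,P)$ (since $S_aT_a(\cdot)=-u_{a,\cdot}$ is a linear form in the $u$'s), whereas in parts 1 and 3 it lies in $J(2,P)$ but is nonzero; your "same computation with roles shifted" misses this distinction.
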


\begin{proof}
1. We expand the determinant $D_{x}^{bc}(a)$ by column $d$. 
We then get
\[ D_{x}^{bc}(a) T_d(x) = 
\sum_{y = b^1, y \neq x}^{b^m} D_{xy}^{bcd}(a) S_dT_d(y) T_d(x). \]
We now sum these over $x$. Thus for each pair $(x,y)$ both $xy$ and
$yx$ occur as indices of the determinant. But these determinants
will then be negatives of each other.
We obtain
\[ \sum_{x  = b^1}^{b^m} D(a)^{bc}_{x} T_d(x) 
= \sum_{x<y} D(a)^{bcd}_{xy}[S_dT_d(y)T_d(x) - T_d(y)S_dT_d(x)]. \]
By Lemma \ref{lem:FlatSTT} the last bracket is in the ideal $J(2,P)$. 

\medskip
2. We do as above but expand along the column $a$.
The sum in the statement is:
\[ \sum_{x  = b^1}^{b^m} D(a)^{bc}_{x} T_a(x) = 
\sum_{x < y} D(a)^{abc}_{xy}[S_aT_a(y)T_a(x) - T_a(y)S_aT_a(x)]. \]
But the expression in the last bracket is
$u_{a,y} a_2 u_{a,x} - u_{a,y}a_2 u_{a,x}$ which is zero.

\medskip
3. We now expand by column $c$.
Again in the same way we get that the sum in the statement is:
 \[ \sum_{x  = b^1}^{b^m} D(a)^{ab}_{x} T_a(x) = 
\sum_{x < y} D(a)^{abc}_{xy}[S_cT_c(y)T_c(x) - T_c(y)S_cT_c(x)]. \]
By Lemma \ref{lem:FlatSTT}, again the last bracket is in the ideal $J(2,P)$. 
\end{proof}

We are now in a position to prove Proposition \ref{pro:FlatBasic}.

\begin{proof}[Proof of Proposition \ref{pro:FlatBasic}]
We shall split into two cases. The first is when $p < c$ and $p < b$ 
strictly, and the second is when $p = b$ and $p < c$.

So consider the first case. Let $q$ and $r$ be chidren of $p$ such
that
\[ p < q \leq b , \quad p < r \leq c,\]
so the depths of $q$ and $r$ are less than the depth of $p$. 
Then 
\[ S_p(b) c_2 - b_2 S_p(c) = 
D(p)^q S_q(b) c_2 - b_2 D(p)^r S_r(c). \]

Now we expand $D(p)^q$ by its column $r$, and we expand $D(p)^r$ by
its column $q$. Then the above is:
\begin{equation} 
 S_q(b) \sum_{x = b^1}^{b^m} D(p)^{qr}_{x} S_rT_r(x) c_2 
- S_r(c) \sum_{x = b^1}^{b^m} D(p)^{rq}_{x} S_qT_q(x) b_2 
\end{equation}
Now we do a little trick by subtracting and adding the same terms,
to make this:
\begin{align} \label{eq:FlatDTS} 
 & S_q(b) \sum_{x = b^1}^{b^m} D(p)^{qr}_{x} (S_rT_r(x) c_2 - T_r(x)S_r(c))
+ S_r(c) \sum_{x = b^1}^{b^m} D(p)^{qr}_{x} (S_qT_q(x) b_2 -T_q(x) S_q(b)) \\ \notag
+ & S_q(b)S_r(c) \sum_{x= b^1}^{b^m} D(p)^{qr}_{x} (T_r(x) + T_q(x)).
\end{align}

By Lemma \ref{lem:FlatTS} the first two summands are in $J(2,P)$. 
For the lower sum note that
\[ T_r(x) + T_q(x) = - T_p(x) - \sum_{b^i \neq q,r} T_{b^i}(x). \]
By Lemma \ref{lem:FlatDT} this terms is also in the ideal $J(2,P)$.

\medskip

Now assume $p = b$. The change to the above is that we first
get  
\[ S_p(b) c_2 - b_2 S_p(c) = 
D(p)^q\cdot 1 \cdot c_2 - b_2 D(p)^r S_r(c), \]
so $S_q(b)$ is replaced by $1$. The sum \eqref{eq:FlatDTS} is replaced
by 
\begin{align*} \label{eq:FlatDTS2} 
 &  \sum_{x = b^1}^{b^m} D(p)^{pr}_{x} (S_rT_r(x) c_2 - T_r(x)S_r(c))
+ S_r(c) \sum_{x = b^1}^{b^m} D(p)^{qr}_{x} (S_pT_p(x) p_2 -T_p(x) \cdot 1) \\
+ & S_q(b)S_r(c) \sum_{x= b^1}^{b^m} D(p)^{pr}_{x}(T_r(x) + T_p(x)).
\end{align*}

The first and third summands are in $J(2,P)$ just as above. The 
paranthesis in the second term is:
\[ u_{p,x}p_2 - p_2u_{p,x} \]
and so the second term vanishes. Thus this expression is also in $J(2,P)$.
\end{proof}

\begin{proposition}
\label{pro:FlatP2}
Let $a \leq b$. 
Then 
\begin{equation} \label{eq:FlatC0}
a_1 T(b) - T(a) R(a,b) b_1 
\end{equation}
is in the ideal $J(2,P)$.
\end{proposition}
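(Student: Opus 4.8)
The plan is to reduce the statement, modulo $J(2,P)$, to a single polynomial identity in $B(2,P)$, which will turn out to be a cofactor expansion of the matrix $M(b')$ for $b'$ the parent of $b$. First I would dispose of the case $a=b$: there $R(a,a)=1$ and the element in question is $a_1T(a)-T(a)a_1=0$, so from now on $a<b$; then $b$ is not minimal and has a parent $b'$, with $R(a,b)=R(a,b')D(b')^b$ and $a\le b'$. Next I would unwind the definitions of $T$, $T_{b'}$ and $T_c$ to write
\[ T(b)=b'_2u_{b',b}+\sum_c\sum_{q\ge c}q_2u_{q,b}, \]
the outer sum running over the siblings $c$ of $b$; one checks that every index $q$ occurring here satisfies $a\le q$. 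Since $p_1q_2-T(p)S_p(q)\in J(2,P)$ for all $p\le q$, multiplying these generators by the appropriate elements $u_{q,b}\in B$ and summing gives
\[ a_1T(b)\equiv T(a)\Big(S_a(b')u_{b',b}+\sum_c\sum_{q\ge c}S_a(q)u_{q,b}\Big)\pmod{J(2,P)}. \]
So the whole proposition comes down to proving that the parenthesis equals $R(a,b)b_1$, and this I would prove as a genuine identity in $B(2,P)$.

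To establish that identity I would first factor $S_a$ along chains. Since the Hasse diagram is a tree, for $q\ge c$ with $c$ a child of $b'$ the saturated chain from $a$ to $q$ passes through $b'\prec c$, so $R(a,q)=R(a,b')D(b')^cR(c,q)$ and hence $S_a(q)=R(a,q)D(q)^q=R(a,b')D(b')^cS_c(q)$; also $S_a(b')=R(a,b')D(b')^{b'}$. Using linearity of $S_c$ together with $T_c(b)=-\sum_{q\ge c}q_2u_{q,b}$ turns $\sum_{q\ge c}S_a(q)u_{q,b}$ into $-R(a,b')D(b')^cS_cT_c(b)$, so the parenthesis becomes
\[ R(a,b')\Big(D(b')^{b'}u_{b',b}-\sum_cD(b')^cS_cT_c(b)\Big). \]
Since $R(a,b')D(b')^b=R(a,b)$, the proposition is now reduced to the identity $D(b')^{b'}u_{b',b}-\sum_cD(b')^cS_cT_c(b)=b_1D(b')^b$.

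Finally, this last identity is the instance, for the row indexed by $b$, of the statement that the signed maximal minors $D(b')^{b^0},D(b')^{b^1},\dots,D(b')^{b^m}$ of $M(b')$ — where $b^0=b'$ and $b^1,\dots,b^m$ are the children of $b'$, one of which is $b$ — form a syzygy of the rows of $M(b')$: stacking a second copy of the row indexed by $b$ on top of $M(b')$ gives a square matrix of determinant $0$, and Laplace expansion along that extra row gives $\sum_{i=0}^mM(b')_{j,i}D(b')^{b^i}=0$, with $j$ the index of $b$. Reading off the entries of that row — namely $S_{b'}T_{b'}(b)=-u_{b',b}$ in the column $b^0=b'$, $S_bT_b(b)=b_1$ on the diagonal, and $S_cT_c(b)$ in the column of each sibling $c$ of $b$ — this becomes $-u_{b',b}D(b')^{b'}+b_1D(b')^b+\sum_cS_cT_c(b)D(b')^c=0$, which rearranges exactly to the wanted identity.

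I do not expect a deep obstacle: the essential input is the minors-form-a-syzygy fact, which is a Laplace expansion of the same sort already used for the matrices $M(a)$ in the proof of Proposition \ref{pro:FlatBasic}, and everything else is bookkeeping. The points needing care are the exact set of $q$'s appearing in $T(b)$ together with the inequalities $a\le q$ that make the generators $p_1q_2-T(p)S_p(q)$ applicable with $p=a$, the sign bookkeeping in the cofactor expansion of $M(b')$, and the observation — which is what keeps the argument non-circular — that the identity $S_a(b')u_{b',b}+\sum_c\sum_{q\ge c}S_a(q)u_{q,b}=R(a,b)b_1$ holds literally in $B(2,P)$ and not merely modulo $J(2,P)$.
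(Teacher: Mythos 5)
Your proof is correct, and while it uses the same basic ingredients as the paper's proof -- the generators $a_1q_2 - T(a)S_a(q)$ of $J(2,P)$, the multiplicativity of $R$ along the saturated chain to $b$ (where the tree hypothesis enters), and a Laplace-type expansion of the minors of $M(b')$ -- it organizes them more cleanly. The paper keeps the determinants $D(p)^b$, $D(p)^p$ and $D(p)^x$ (with $p$ the parent of $b$) intact, expands each along row $b$, interleaves these expansions with verifications that various pieces lie in $J(2,P)$, and finishes with an ad hoc cancellation of the ``$xy$ versus $yx$'' cross terms. You instead collapse $a_1T(b)$ modulo $J(2,P)$ in a single step -- which requires only the observation that every index $q$ occurring in $T(b)$ satisfies $a\leq q$, a consequence of $a\leq b'$ -- and then recognize that what remains, once $T(a)$ and $R(a,b')$ are factored out, is a literal identity in $B(2,P)$: the Cramer row-minor syzygy
\[
\sum_{i=0}^m S_{b^i}T_{b^i}(b)\, D(b')^{b^i} = 0
\]
of $M(b')$ evaluated on the row indexed by $b$. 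Isolating that syzygy as the single determinantal input makes the argument shorter and easier to audit, and it makes transparent that the paper's final antisymmetry cancellation is exactly this syzygy in disguise. Two small points in your favor: you dispose of $a=b$ (and hence $b=\rho$) at the outset, which the paper elides and which is needed to guarantee the parent $b'$ exists; and you state explicitly that the residual identity holds in $B(2,P)$, not merely modulo $J(2,P)$, which is what keeps the argument from being circular.
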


\begin{proof}
Suppose $p$ is the parent of $b$ and let $b = b^1,b^2, \ldots,b^m$ 
be the childern of $p$. Then
\begin{align*}
a_1 T(b) - T(a)R(a,b)b_1 & = a_1p_2u_{p,b} - \sum_{x=b^2}^{b^m} a_1 T_x(b) - b_1 T(a) R(a,p) D(p)^b
\end{align*}
Expanding $D(p)^b$ by row $b$, it is
\[ D(p)^b =  -u_{p,b} D(b)^{bp}_{b} + \sum_{x = b^2}^{b^m} 
S_xT_x(b) D(p)^{bx}_{b}. \]
We insert this in the above expression and it becomes:
\begin{align*}
a_1p_2 u_{p,b} - \sum_{x=b^2}^{b^m} a_1 T_x(b) - b_1 T(a) R(a,p) 
[u_{p,b} D(p)^{pb}_b
+ \sum_{x=b^2}^{b^m} S_xT_x(b) D(p)^{bx}_b]
\end{align*}
Since 
\[D(p)^p = b_1 D(p)^{pb}_b + \sum_{x=b^2}^{b^m} S_xT_x(b) D(p)^{px}_b, \]
the above equals
\begin{align*}
& a_1p_2u_{p,b} - \sum_{x=b^2}^{b^m} a_1 T_x(b) - T(a) R(a,p) u_{p,b} D(p)^p + 
\sum_{x=b^2}^{b^m} T(a) R(a,p) u_{p,b} S_xT_x(b) D(p)^{px}_b\\
& - \sum_{x=b^1}^{b^m} b_1 T(a) R(a,p) S_xT_x(b) D(p)^{bx}_b.
\end{align*}
Note that 
\[ u_{p,b}[a_1 p_2 - T(a) R(a,p)D(p)^p] \]
is in the ideal $J(2,P)$. Adding this to the above, the above will
modulo $J(2,P)$ be:
\begin{equation} \label{eq:FlatLig}
 - \sum_{x=b^2}^{b^m} a_1 T_x(b) + \sum_{x=b^2}^{b^m} T(a) R(a,p) S_xT_x(b) 
[u_{p,b} D(p)^{px}_b] - \sum_{x=b^2}^{b^m} b_1 T(a) R(a,p) S_xT_x(b) D(p)^{bx}_b
\end{equation}
Expanding $D(p)^x$ by row $b$ it is:
\[ D(p)^x = -u_{p,b}D(p)^{xp}_b + \sum_{\substack{y = b^2\\y \neq x}}^{b^m}
S_yT_y(b)D(p)^{xy}_{b} + b_1D(p)^{xb}_b.\]
Using this we replace $u_{p,b}D^{px}_b$ in \eqref{eq:FlatLig} and get
\begin{align} \label{eq:FlatLast}
& - \sum_{x=b^2}^{b^m} a_1 T_x(b) + \sum_{x=b^2}^{b^m} T(a) R(a,p) S_xT_x(b) 
[ D(p)^x - b_1 D(p)^{xb}_b - \sum_{\substack{y=b^2\\y\neq x}}^{b^m} 
S_yT_y(b) D(p)^{xy}_b ]\\
& - \sum_{x=b^1}^{b^m} b_1 T(a) R(a,p) S_xT_x(b) D(p)^{bx}_b \notag
\end{align}
Here 
\[ a_1 T_x(b) - T(a)R(a,p)S_xT_x(b)D(p)^x = a_1T_x(b) - T(a)S_a(T_x(b)) \]
is in the ideal $J(2,P)$.
Furthermore the terms with $xy$ superscripts cancels against
the terms with $yx$ superscripts. Thus \eqref{eq:FlatLast} is in the
ideal $J(2,P)$.
\end{proof}

\begin{theorem} \label{thm:Flat} 
The ring $B(2,P)/J(2,P)$ is a flat deformation of
of $\kr[x_{[2]\times P}]/L(2,P)$ over the ring $B = \kr[u_{\empt,\rot}, u_{q,p}]$. 
\end{theorem}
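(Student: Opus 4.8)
The plan is to verify the two defining conditions of a deformation (Section~\ref{FamSubsecDefo}): flatness of $B(2,P)/J(2,P)$ over $B$, and that the fiber over the distinguished point $u=0$ recovers $\kr[x_{[2]\times P}]/L(2,P)$. The second condition is immediate: setting all variables $u_{\empt,\rot}$ and $u_{q,p}$ to zero, we have $T(p)\mapsto 0$ for every $p$ (each $T(p)$ is a $\kr$-linear combination of monomials divisible by some $u$-variable, as is visible from the recursive definition and confirmed by Proposition~\ref{pro:Graded}(1), whose degree $p_1+\hat p$ forces a $u$ to appear), so each generator $p_1q_2-T(p)S_p(q)$ of $J(2,P)$ specializes to the monomial $p_1q_2$. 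Hence $J(2,P)\te_B\kr = L(2,P)$ and the fiber is as required. So the entire content is flatness.

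For flatness I would invoke Proposition~\ref{pro:GradedFlatlift}: since the grading on $B(2,P)$ of Proposition~\ref{pro:Graded} is positive (established in the previous section), it suffices to show that \emph{every relation among the images $\ov{f_i}$ of the generators $f_i = p_1q_2-T(p)S_p(q)$ in $\kr[x_{[2]\times P}]/L(2,P)$ — i.e. every syzygy of the monomial generators of $L(2,P)$ — lifts to a relation among the $f_i$ themselves in $B(2,P)$}. The monomial ideal $L(2,P)$ is the edge ideal of a Cohen--Macaulay bipartite graph, and its minimal first syzygies are well understood; concretely they are generated by the ``Koszul-type'' relations coming from pairs of generators sharing a variable. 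There are three shapes to check, according to which variable is shared:
\begin{enumerate}
\item $r_1\cdot(p_1q_2) - p_1\cdot(r_1q_2)$ for $p\le q$, $r\le q$ (shared variable $q_2$);
\item $q_1\cdot(p_1q_2) - p_1\cdot(q_1q_2)$ and more generally $t_2$-multiples for $p\le q\le t$ (shared variable $p_1$);
\item the relation $p_2\cdot(p_1q_2) - q_2\cdot(p_1p_2)$ linking $p_1q_2$ and $p_1p_2$ (shared $p_1$).
\end{enumerate}
Lifting relation type~(1) means showing $r_1\,T(p)S_p(q) - p_1\,T(r)S_r(q)$ — more precisely $r_1 f_{p,q} - p_1 f_{r,q}$ adjusted by $L(2,P)$-multiples — lies in $J(2,P)$; after replacing $p_1q_2$ and $r_1q_2$ by their deformed values this reduces to statements of the form ``$S_p(b)c_2-b_2S_p(c)\in J(2,P)$'', which is exactly Proposition~\ref{pro:FlatBasic}, and ``$a_1T(b)-T(a)R(a,b)b_1\in J(2,P)$'', which is exactly Proposition~\ref{pro:FlatP2}. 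Relation type~(3), linking $p_1p_2$ with $p_1q_2$ for $q>p$, again unwinds to $T(p)S_p(q) = T(p)R(p,q)D(q)^qp_2$-type identities modulo $J(2,P)$, handled by the same two propositions together with the definition $S_p(q)=R(p,q)D(q)^q$. Type~(2) is similar, using $R(p,t)=R(p,q)R(q,t)$ and the factorization of $S$ along chains.

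The step I expect to be the main obstacle is assembling the \emph{complete} list of (not merely minimal) generating syzygies of $L(2,P)$ and checking that each lifts — in particular one must be careful that the non-minimal syzygies, and the ``second-order'' cancellations among the chosen generating syzygies, also lift, or alternatively reduce the problem to the minimal ones. Since $L(2,P)$ has a linear resolution (it is the edge ideal of a Cohen--Macaulay bipartite graph), the minimal first syzygies are linear and are spanned by the three types above, so once those are shown to lift, Proposition~\ref{pro:GradedFlatlift} applies and flatness over $B$ follows; combined with the fiber computation, this gives that $B(2,P)/J(2,P)$ is a flat deformation of $\kr[x_{[2]\times P}]/L(2,P)$ over $B$, as claimed. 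The real work has already been front-loaded into Propositions~\ref{pro:FlatBasic} and~\ref{pro:FlatP2}, so the proof of the theorem itself should be a relatively short bookkeeping argument organizing those inputs.
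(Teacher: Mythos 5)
Your overall strategy coincides with the paper's: reduce to Proposition~\ref{pro:GradedFlatlift} and lift a generating set of syzygies of $L(2,P)$, with Propositions~\ref{pro:FlatBasic} and~\ref{pro:FlatP2} doing the real work. The fiber computation is correct, and you correctly see that the only genuine content is the lifting of relations.

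However, there is a gap in the classification of generating syzygies. The paper works with exactly two families: (i) $c_2\cdot a_1b_2 - b_2\cdot a_1c_2$ with $a\le b$ and $a\le c$ (shared variable $a_1$; here $b$ and $c$ need \emph{not} be comparable — they can sit in different branches above $a$), handled by Proposition~\ref{pro:FlatBasic}; and (ii) $b_1\cdot a_1c_2 - a_1\cdot b_1c_2$ with $a\le b\le c$ (shared variable $c_2$), handled by Proposition~\ref{pro:FlatP2}. Your type~(1) is (ii) in other letters. Your type~(3) is only the special case $c=a$ of (i), i.e.\ the syzygies that link $p_1p_2$ with $p_1q_2$. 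Your type~(2) is stated with the constraint $p\le q\le t$, so even if you meant the shared-$p_1$ syzygy $t_2\cdot p_1q_2 - q_2\cdot p_1t_2$, the extra hypothesis $q\le t$ excludes precisely the situation where $q$ and $t$ are incomparable. Thus the general shared-$a_1$ syzygy with $b,c$ incomparable is never covered, and it cannot be derived from your type~(3): from the two special-case syzygies one only obtains $a_2\cdot\bigl(c_2\,e_{a_1b_2}-b_2\,e_{a_1c_2}\bigr)$, not the degree-$3$ syzygy itself, which is a genuine minimal first syzygy (already for the three-element star poset). This is not a bookkeeping nuisance but the central case where Proposition~\ref{pro:FlatBasic} is used.

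Two smaller remarks. The claim that ``$L(2,P)$ has a linear resolution'' because it is a Cohen--Macaulay bipartite edge ideal is false in general; $L(2,P)$ has regularity equal to the maximum antichain of $P$, which exceeds one as soon as $P$ is not a chain, so the resolution is not linear. Your reduction to ``the three types span all minimal first syzygies because they are linear'' therefore does not stand; one must argue directly that the shared-variable Koszul syzygies generate (which, for the tree case, the paper implicitly does with its two families). Also, the worry about ``second-order cancellations'' and non-minimal syzygies is unnecessary: Proposition~\ref{pro:GradedFlatlift} (via the local criterion for flatness and lifting of relations) only requires lifting a generating set of relations; all other relations then lift automatically.
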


\begin{proof}
By Proposition \ref{pro:GradedFlatlift}
it is enough to show that all relations between the generators of $L(2,P)$ 
lift to relations between the corresponding generators of $J(2,P)$. 

The relations of $L(2,P)$ are the following. 

\begin{itemize}
\item[1.] $ c_2 \cdot a_1 b_2 - b_2 \cdot a_1 c_2$ where $a \leq b$ and $a \leq c$.
\item[2.] $b_1 \cdot a_1c_2 - a_1 \cdot b_1 c_2$ where $a \leq b \leq c$. 
\end{itemize}

The monomial $a_1 b_2$ deforms to $a_1 b_2 - T(a) S_a(b_2)$, and similarly
for $a_1 c_2$. 
Let $m_B$ be the maximal ideal of $B = \kr[u_{\empt,\rot}, u_{q,p}]$
generated by the $u$'s.
For relations of type 1. it will be enough to show
that 
\begin{equation} \label{eq:FlatRelEn} 
c_2(a_1 b_2 - T(a) S_a(b_2)) - b_2(a_1c_2 - T(a) S_a(c_2)) 
\end{equation} 
is in $J(2,P) \cap (m_B \cdot B(2,P))$. But \eqref{eq:FlatRelEn}
is \[ T(a) [S_a(c) b_2 - c_2 S_a(b)]. \]
By Proposition \ref{pro:FlatBasic} the second factor is
in $J(2,P)$ and since $T(a)$ is in $m_B$ we are done.

Consider now relations of type 2. It is enough to show that 
\begin{equation} \label{eq:FlatRelTo}
b_1(a_1 c_2 - T(a) S_a(c_2)) - a_1 (b_1 c_2 - T(b) S_b(c_2))
\end{equation}
is in $J(2,P) \cap (m_B \cdot B(2,P))$. But \eqref{eq:FlatRelTo}
is 
\[ S_b(c_2) [ a_1 T(b) - T(a) R(a,b) b_1]. \]
By Proposition \ref{pro:FlatP2} below,  the second factor is in $J(2,P)$. It is also in
$m_B$ due to the $T$'s in the second factor, and so we are done.
\end{proof}

Recall the positive $\ZZ([2] \times P)$-grading on $B(2,P)$ from
Section \ref{sec:Grad}.
 
\begin{corollary} \label{cor:FlatHilbert}
Let $(L(2,P))$ be the ideal generated by $L(2,P)$ in $B(2,P)$.
The $\ZZ([2] \times P)$-graded rings $B(2,P)/J(2,P)$ and $B(2,P)/(L(2,P))$
have the same Hilbert function $h : \ZZ([2] \times P) \rarr \NN$.
\end{corollary}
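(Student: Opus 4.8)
The plan is to deduce this directly from the flatness result in Theorem \ref{thm:Flat} together with the positive grading established in Section \ref{sec:Grad}. The key point is that flatness over a graded base, when everything in sight is finitely and positively graded, forces equality of Hilbert functions. More precisely: Theorem \ref{thm:Flat} shows that $B(2,P)/J(2,P)$ is flat over the polynomial base ring $B = \kr[u_{\empt,\rot}, u_{q,p}]$, with special fiber $\kr[x_{[2]\times P}]/L(2,P)$ obtained by setting all the $u$-variables to zero. On the other hand, $B(2,P)/(L(2,P))$ is trivially flat over $B$, being $\bigl(\kr[x_{[2]\times P}]/L(2,P)\bigr) \te_\kr B$, and it has the same special fiber at $u = 0$. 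So the statement I want to prove is that two $\ZZ([2]\times P)$-graded $B$-flat families with the same special fiber have the same Hilbert function over the fine grading group.

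The key steps, in order, are as follows. First, I would recall from Section \ref{sec:Grad} that the $\ZZ([2]\times P)$-grading on $B(2,P)$ is positive, so each graded piece $B(2,P)_a$ is a finite-dimensional $\kr$-vector space, and this descends to finite-dimensionality of each graded piece of either quotient ring; in particular the Hilbert functions $h_J, h_L : \ZZ([2]\times P) \to \NN$ are well-defined. Second, I would observe that both $B(2,P)/J(2,P)$ and $B(2,P)/(L(2,P))$ are finitely generated graded modules over the finitely generated graded $\kr$-algebra $B(2,P)$, and both are flat over the graded subalgebra $B$ (for the second this is immediate, for the first it is Theorem \ref{thm:Flat}). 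Third — the crux — since $B$ is a polynomial ring generated by homogeneous elements of positive $\ZZ([2]\times P)$-degree, a finitely generated graded $B$-module that is flat is in fact free; choosing a homogeneous $B$-basis, the Hilbert function of the module is determined entirely by the degrees of that basis, and by graded Nakayama those degrees coincide with a homogeneous $\kr$-basis of the special fiber $M \te_B \kr = M/(u)M$. Since both families have special fiber $\kr[x_{[2]\times P}]/L(2,P)$ with its induced $\ZZ([2]\times P)$-grading, they acquire homogeneous $B$-bases in the same degrees, hence the same Hilbert function. Concretely, if $\{m_i\}$ with $\deg m_i = a_i$ is a homogeneous $\kr$-basis of the special fiber, then for either module $h(a) = \#\{i : a_i \le a \text{ in the monoid generated by the } u\text{-degrees}\}$ — more precisely $h(a) = \sum_i \dim_\kr B_{a - a_i}$ — and this expression depends only on the multiset $\{a_i\}$, which is common to both.

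The main obstacle I anticipate is justifying the passage "graded flat over a positively graded polynomial ring $\Rightarrow$ graded free" cleanly, and making sure the fine grading group $\ZZ([2]\times P)$ (rather than $\ZZ$) does not cause trouble. This is handled by the positivity of the grading: by the Lemma before Proposition in Section \ref{sec:Grad} (existence of a homomorphism $\ZZ([2]\times P) \to \ZZ$ making all variables have strictly positive $\ZZ$-degree), one can transport the problem to the standard $\ZZ$-graded setting where "finitely generated $+$ flat over a positively graded Noetherian $\kr$-algebra $\Rightarrow$ free" is classical, or alternatively invoke the local criterion of flatness exactly as in the proof of Proposition \ref{pro:GradedFlatlift}, which already carried out the analogous translation from the local to the positively graded case. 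Once freeness is in hand, the matching of basis degrees via graded Nakayama and the resulting equality of Hilbert functions is routine bookkeeping.
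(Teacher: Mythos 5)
Your argument reaches the right conclusion and, like the paper, ultimately rests on exactly two ingredients: flatness over $B$ and positivity of the fine grading. But the route is genuinely different. The paper argues by induction on the number of $u$-variables: tensoring the short exact sequence $0 \to B \xrightarrow{u_1} B \to B/u_1B \to 0$ with each of the two $B$-flat families yields short exact sequences of graded modules, so (using positivity to make the relevant sum finite) the Hilbert function of each family is recovered from that of its image mod $u_1$, and one inducts down to the common special fiber $\kr[x_{[2]\times P}]/L(2,P)$. You instead argue that each family is graded free over $B$, match homogeneous basis degrees with those of the special fiber by graded Nakayama, and read off $h(a) = \sum_i \dim_\kr B_{a-a_i}$. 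Your version is arguably more conceptual; the paper's is more elementary, avoiding any explicit "flat $\Rightarrow$ free" lemma.

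There is, however, one misstatement to repair. You repeatedly assert that $B(2,P)/J(2,P)$ and $B(2,P)/(L(2,P))$ are \emph{finitely generated} graded $B$-modules, and your appeal to "finitely generated $+$ flat over a positively graded ring $\Rightarrow$ free" relies on this. But they are not finitely generated over $B$: they are quotients of $B(2,P)=B\otimes_\kr \kr[x_{[2]\times P}]$, which is itself an infinitely generated $B$-module. The correct hypothesis you should invoke in place of finite generation is the one you in fact have from Section~\ref{sec:Grad}: positivity of the $\ZZ([2]\times P)$-grading, which makes every graded piece finite-dimensional, and, after pushing forward along a homomorphism $d:\ZZ([2]\times P)\to\ZZ$ taking every variable to a positive integer, ensures that only finitely many graded pieces occur below any given $d$-level. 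With that, graded Nakayama applies degree-by-degree and the "lift a homogeneous basis of the fiber, it is a $B$-basis" argument is valid without finite generation. As written, your key step invokes a hypothesis that is false; once restated with the correct (and available) positivity hypothesis, the argument is sound.
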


\begin{proof}
We consider the situation in greater generality. 
We have an $A$-graded polynomial ring $S$,
and a homogeneous ideal $I \sus S$. Let $B = \kr[u_k]_{k \in K}$ be an 
$A$-graded polynomial ring and $J \sus S \te_{\kr} B$ a flat deformaion
of $I$ over $B$. Suppose the $A$-grading on $(S \te_{\kr} B) /J$ is positive.
Then we show that $(S \te_{\kr} B)/J$ and $(S \te_{\kr} B)/(I)$ have the same
Hilbert function.

Let $u_1$ be a variable in $B$, and $B^\prime = B/u_1\cdot B$.
Note that $u_1$ is $A$-homogeneous.
Consider $ 0 \rarr B \mto{u_1} B \rarr B^\prime \rarr 0$. Tensoring this exact sequence
with the rings
above and taking into account their flatness over $B$, we have exact sequences
\begin{align*}  0 \rarr (S \te_{\kr} B)/J \mto{u_1} & (S \te_{\kr} B)/J 
\rarr  (S \te_{\kr} B^\prime)/J^\prime  \rarr 0 \\
0 \rarr (S \te_{\kr} B)/(I) \mto{u_1} & (S \te_{\kr} B)/(I)
\rarr  (S \te_{\kr} B^\prime)/(I) \rarr 0
\end{align*}
By induction on the number of $u$-variables, 
we may assume that $(S \te_{\kr} B^\prime)/J^\prime$
and $(S \te_{\kr} B^\prime)/(I)$ have the same Hilbert function. Then
the same must be true for $(S \te_{\kr} B)/J$ and $(S \te_{\kr} B)/(I)$.
\end{proof}

\section{Rigidity of the deformation family}
\label{sec:Rigid}

We show that the ideal $J(2,P) \sus B(2,P)$ is a rigid ideal, meaning
that every deformation of $J(2,P)$ comes from a 
change of coordinates.

Let $Y$ be a finite-dimensional vector space and
$U \sus Y$ a subspace. We also denote $Y/U = X$. Fixing a splitting 
the reader may
think of $Y$ as $X \oplus U$. We shall use such a direct 
decomposition in the arguments but will not need it for our statements. 
The space $X$ may usually be thought of as the space
generated by the variables $p_1$ and $p_2$ as $p$ ranges over $P$,
and $U$ as the space of variables $u_{q,p}$ and $u_{\empt,\rho}$ 
from Section \ref{sec:Family}.
Our basic situation is an ideal $J \sus  \kr[Y]$ such that
$\kr[Y]/J$ is flat over $\kr[U]$. 
It is then a flat deformation of the ideal $I = J \te_{\kr[Y]} \kr[Y/U]$
in $\kr[Y/U]$. The situation we have in mind is when $J = J(2,P)$
and $I = L(2,P)$.




Now we take a base change $\kr[U] \rarr \kr[U]/(U)^2$ and get
an ideal 
\[ J \te_{\kr[U]} \kr[U]/(U)^2 \sus  \kr[Y]/(U)^2 \]
which is a flat deformation of $I \sus \kr[Y/U]$ over $\kr[U]/(U)^2$. Hence
by Proposition \ref{pro:DefoTangent} it induces a map 
\[ U^* \mto{\alpha} \Hom_{\kr[Y/U]}(I, \kr[Y/U]/I). \]
Recall the first cotangent cohomology group $T^1(\kr[Y/U]/I)$. 
It parametrizes the first order non-trivial deformations of $I$.
The following says that if the deformation of $I$ over $\kr[U]/(U)^2$ 
encompasses all non-trivial deformations, then all first order deformations of 
$I$ come from infinitesimal coordinate changes of $J \sus \kr[Y]$.

\begin{lemma} \label{lem:DefoYSurj}
Suppose the composition
\begin{equation*} \label{eq:DefoUHom}
U^* \mto{\alpha} 
\Hom_{\kr[Y/U]}(I,\kr[Y/U]/I)  \mto{\tau} T^1(\kr[Y/U]/I)
\end{equation*}
maps $U^*$ to a generating set for $T^1$. 

Then the image of the composition (the first map is the map from
Lemma \ref{lem:DefoDerivation})
\begin{equation} \label{eq:RigidBeta}
 Y^* \mto{\beta} \Hom_{\kr[Y]}(J, \kr[Y]/J) \mto{q} 
\Hom_{\kr[Y/U]}(I, \kr[Y/U]/I)
\end{equation}
is a generating set for $\Hom_{\kr[Y/U]}(I, \kr[Y/U]/I)$.
\end{lemma}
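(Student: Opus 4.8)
The plan is to fix a splitting $Y = X \oplus U$ with bases $x_1,\dots,x_n$ of $X$ and $u_1,\dots,u_s$ of $U$, giving a basis $\{x_i,u_k\}$ of $Y$ with dual basis $\{x_i^\ast,u_k^\ast\}$ and a decomposition $Y^\ast = X^\ast \oplus U^\ast$. The overall logic is that $\Hom_{\kr[Y/U]}(I,\kr[Y/U]/I)$ is generated, as a $\kr[Y/U]$-module, by $\im\delta^\ast$ together with \emph{any} lift of a generating set of $T^1(\kr[Y/U]/I) = \coker\delta^\ast$; I would then show that $q\beta$ restricted to $X^\ast$ produces generators of $\im\delta^\ast$, while $q\beta$ restricted to $U^\ast$ produces (via the identification with $\alpha$) exactly such a lift of a generating set of $T^1$. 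Here $q$ is the restriction map induced by base change along $\kr[Y] \to \kr[Y/U]$: it sends $\psi \in \Hom_{\kr[Y]}(J,\kr[Y]/J)$ to the induced map $J/(U\cdot J) \rightarrow (\kr[Y]/J)/(U\cdot\kr[Y]/J)$, and flatness of $\kr[Y]/J$ over $\kr[U]$ identifies the source with $I$ and the target with $\kr[Y/U]/I$ (so in particular $q$ is well defined on all of $\Hom_{\kr[Y]}(J,\kr[Y]/J)$).

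\emph{The $X^\ast$-part.} By Lemma~\ref{lem:DefoDerivation} (applied to $J \sus \kr[Y]$) the map $\beta(x_i^\ast)$ sends a generator $f$ of $J$ to $\partial f/\partial x_i$ modulo $J$. Since the substitution $U = 0$ commutes with $\partial/\partial x_i$, applying $q$ gives the homomorphism carrying $\bar f \in I$ to $\partial \bar f/\partial x_i$ modulo $I$; that is, $q\beta(x_i^\ast) = \delta^\ast(\partial/\partial x_i)$ for the map $\delta^\ast$ of \eqref{eq:DecoDelta} attached to $\kr[Y/U]/I$. As $\Der_\kr(\kr[Y/U])$ is free on the $\partial/\partial x_i$ and $\delta^\ast$ is $\kr[Y/U]$-linear, the $\kr[Y/U]$-submodule generated by $q\beta(X^\ast)$ is precisely $\im\delta^\ast$.

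\emph{The $U^\ast$-part.} Expand a generator $f$ of $J$ in its Taylor series about $U=0$; its part of degree one in the $u$-variables is $\sum_k u_k\,(\partial f/\partial u_k|_{U=0})$ with each $\partial f/\partial u_k|_{U=0} \in \kr[Y/U]$. On one hand Lemma~\ref{lem:DefoDerivation} gives $\beta(u_k^\ast): f \mapsto \partial f/\partial u_k \bmod J$, so $q\beta(u_k^\ast)$ is the homomorphism $\bar f \mapsto \overline{\partial f/\partial u_k|_{U=0}}$ in $\kr[Y/U]/I$. On the other hand $\alpha$ is, by construction, the linear map associated by Proposition~\ref{pro:DefoTangent} and the explicit recipe following it to the first order deformation $J\te_{\kr[U]}\kr[U]/(U)^2 \sus \kr[Y]/(U)^2$ of $I$; reading off that recipe — keeping straight the bookkeeping $V = U^\ast$, $V^\ast = U$, so that the deformation has the form $\{g + \sum_k u_k\,\alpha(u_k^\ast)(g)\}$ — shows that $\alpha(u_k^\ast)$ is exactly the homomorphism $\bar f \mapsto \overline{\partial f/\partial u_k|_{U=0}}$. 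Hence $q\beta(u_k^\ast) = \alpha(u_k^\ast)$ for every $k$. This matching of the two explicit recipes is the one step that needs genuine care and is where I expect the \emph{main obstacle} to lie; the rest is formal.

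\emph{Conclusion.} Let $N \sus \Hom_{\kr[Y/U]}(I,\kr[Y/U]/I)$ be the $\kr[Y/U]$-submodule generated by the image of $q\beta$. By the $X^\ast$-part $N \supseteq \im\delta^\ast$, and by the $U^\ast$-part together with the hypothesis the elements $\tau(q\beta(u_k^\ast)) = \tau(\alpha(u_k^\ast))$ generate $T^1(\kr[Y/U]/I) = \Hom_{\kr[Y/U]}(I,\kr[Y/U]/I)/\im\delta^\ast$, so $N$ surjects onto this quotient. A submodule containing $\im\delta^\ast$ and surjecting onto the quotient by it is the whole module, so $N = \Hom_{\kr[Y/U]}(I,\kr[Y/U]/I)$, which is the assertion.
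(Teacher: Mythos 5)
Your proof is correct and follows essentially the same route as the paper's: split $Y = X\oplus U$, check that $q\beta$ restricted to $X^\ast$ lands in and generates $\im\delta^\ast$, check that $q\beta$ agrees with $\alpha$ on $U^\ast$ (the paper does this by expanding $f = f_0 + \sum_i u_i f_i + O(u^2)$, which is exactly your Taylor-expansion computation of $\partial f/\partial u_k|_{U=0}$), and then conclude that a submodule containing $\im\delta^\ast$ and surjecting onto $T^1$ is everything. The paper packages that last step as a three-column commutative diagram with exact rows and invokes the snake lemma, but it is the same argument.
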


\begin{proof} Let $X = Y/U$ and fix a splitting $Y = X \oplus U$.
Let the $u_i$'s form a basis for $U$. Consider elements in the ideal $J$
written as 
\[ f = f_0 + \sum_i u_if_{i} +
\text{terms involving degree two monomial in the } u_i, \] 
where $f_0 \in I$ and the $f_i$ are in $\kr[X]$. 
Then the image of $u_i^* \in U^* \sus Y^*$ by $\beta$ is the map
sending 
\[ f \mapsto f_i + \text{ terms of degrees } \geq 1 \text{ in the } u_i. \]
The image of $u_i^*$ by $\alpha$ is the map sending
\[ f_0 \mapsto f_i. \]

Hence we see that restricted to $U^* \sus Y^*$ there is a commutative diagram
\[ \begin{CD} U^* &  @>{=}>> & U^* \\
@VV{\beta}V & &  @VV{\alpha}V \\
\Hom_{\kr[Y]}(J, \kr[Y]/J) & @>>> & \Hom_{\kr[Y/U]}(I, \kr[Y/U]/I)
\end{CD} \]
(We do not get a commutative diagram if we replace $U^*$ by
$Y^*$ at the upper left.)
Consider the map 
\[ \kr[X] \te_\kr (X^* \oplus U^*) = \kr[X] \te_\kr Y^* \mto{\hat{\beta}}
 \Hom_{\kr[X]}(I, \kr[X]/I) \]
coming from the composing the maps in \eqref{eq:RigidBeta}.
We have established that the map on the second factor identifies
with $\alpha$.
The cokernel of
\[ \kr[X] \te_{\kr} X^*  \rarr
\Hom_{\kr[X]}(I, \kr[X]/I). \]
is $T^1(\kr[X]/I)$. 
Hence we get the lower row in the diagram below, and a commutative diagram
\[ \begin{CD} \kr[X] \te_{\kr}X^* & @>>> & \kr[X] \te_{\kr} (X^* \oplus U^*) 
& @>>> & \kr[X] \te_{\kr} U^* \\
@VVV & & @VV^{\hat{\beta}}V & & @VV{\tau \circ \alpha}V \\ 
\im  (\kr[X] \te_{\kr}X^*) & @>>> & \Hom_{\kr[X]}(I, \kr[X]/I) & @>{\tau}>> & T^1 
\end{CD} \]

Since the left and right maps are surjective, the middle one is
also by the snake lemma.
\end{proof}

\begin{theorem} \label{thm:Rigid}
Suppose $\kr[Y]$ has a positive grading by an abelian group and 
the subspace $U \sus Y$ and the ideal $J$ are homogeneous for this
grading.
If the composition
\[U^* \rarr  \Hom_{\kr[Y/U]}(I,\kr[Y/U]/I)  \rarr T^1(\kr[Y/U]/I)
\]
is surjective, then $J \sus \kr[Y]$ is a rigid ideal.
\end{theorem}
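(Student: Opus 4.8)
The plan is to show that $T^1(\kr[Y]/J)$, which by definition is the cokernel of $\delta^*:\Der_\kr(\kr[Y])\to\Hom_{\kr[Y]}(J,\kr[Y]/J)$, vanishes. Since $J$ is a finitely generated ideal in the noetherian ring $\kr[Y]$ and $J$, $U$ are homogeneous, this cokernel is a finitely generated graded $\kr[Y]$-module for the given positive grading. By the graded Nakayama lemma (applicable because the grading is positive, so $\kr[Y]$ becomes connected after pushing to $\ZZ$) it therefore suffices to prove $T^1(\kr[Y]/J)=\mathfrak u\cdot T^1(\kr[Y]/J)$, where $\mathfrak u\sus\kr[Y]$ is the ideal generated by $U$: indeed $\mathfrak u$ is contained in the irrelevant maximal ideal, so this equality forces $T^1(\kr[Y]/J)=0$. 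Writing $M=\Hom_{\kr[Y]}(J,\kr[Y]/J)$, the goal reduces to the statement $M=\im\delta^*+\mathfrak u M$, i.e.\ that $M$ modulo $\mathfrak u M$ is generated by the image of $\im\delta^*$.

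Next I would pass to the special fibre. Put $R=\kr[Y/U]=\kr[Y]/\mathfrak u$. Using the standing hypothesis that $\kr[Y]/J$ is flat over $\kr[U]$, together with the fact that $\kr[Y]$ is a domain, the linear forms spanning $U$ form a regular sequence on $\kr[Y]$, on the submodule $J\sus\kr[Y]$, and on $\kr[Y]/J$; hence $J\otimes_{\kr[Y]}R\xrightarrow{\sim} I$, and reducing a free $\kr[Y]$-resolution of $\kr[Y]/J$ modulo $\mathfrak u$ yields a free $R$-resolution of $R/I$. From this it follows that the base-change map
\[ \theta:M\otimes_{\kr[Y]}R\longrightarrow \Hom_R(I,R/I) \]
is injective (indeed an isomorphism). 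I expect this to be the main obstacle: it is the one place where flatness of the family is really used, and the cleanest route is the simultaneous regular-sequence observation above — equivalently, flat base change for the normal module of the flat family $\kr[Y]/J$ over $\kr[U]$ — after which the homological bookkeeping with the Koszul complex on $U$ is routine.

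Finally I would combine the two steps. By Lemma \ref{lem:DefoDerivation} the map $\beta$ of \eqref{eq:RigidBeta} sends each dual basis vector $y_i^*$ to the derivation $\partial/\partial y_i$ regarded as an element of $M$, so $\im\delta^*$ is exactly the $\kr[Y]$-submodule $\kr[Y]\cdot\im\beta$. Let $\overline{(\,\cdot\,)}$ denote reduction modulo $\mathfrak u M$, i.e.\ tensoring with $R$ over $\kr[Y]$. Then $\theta(\overline{\im\delta^*})$ is the $R$-submodule of $\Hom_R(I,R/I)$ generated by $q(\im\beta)$, where $q$ is the second map of \eqref{eq:RigidBeta}; by Lemma \ref{lem:DefoYSurj} (whose hypothesis is precisely the surjectivity assumed here) this submodule is all of $\Hom_R(I,R/I)$. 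Since $\theta(\overline M)$ is contained in $\Hom_R(I,R/I)$ and already contains $\theta(\overline{\im\delta^*})=\Hom_R(I,R/I)$, we get $\theta(\overline{\im\delta^*})=\theta(\overline M)$, and injectivity of $\theta$ gives $\overline{\im\delta^*}=\overline M$. This says precisely $M=\im\delta^*+\mathfrak u M$, so by the first paragraph $T^1(\kr[Y]/J)=0$ and $J$ is a rigid ideal.
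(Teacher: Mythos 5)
Your outer scheme — reduce via graded Nakayama to showing $M := \Hom_{\kr[Y]}(J,\kr[Y]/J)$ equals $\im\delta^* + (U)\,M$, then use Lemma \ref{lem:DefoYSurj} to see that $\theta(\overline{\im\delta^*})$ is all of $\Hom_R(I,R/I)$ — is exactly the paper's. The gap is in the claim that injectivity of $\theta\colon M\te_{\kr[Y]}R\to\Hom_R(I,R/I)$ is routine given flatness of $\kr[Y]/J$ over $\kr[U]$. For a \emph{single} linear form $u$ that is regular on $\kr[Y]/J$ this is true: applying $\Hom_{\kr[Y]}(J,-)$ to $0\to\kr[Y]/J\mto{u}\kr[Y]/J\to(\kr[Y]/J)/u\to 0$ and using adjunction gives
\[ 0 \longrightarrow M/uM \longrightarrow \Hom_{\kr[Y]/u}(J/uJ,(\kr[Y]/J)/u) \longrightarrow \ext^1_{\kr[Y]}(J,\kr[Y]/J)[u] \longrightarrow 0 , \]
with $[u]$ denoting $u$-torsion, and this left-exactness is precisely the ``$\phi=u\psi$'' step in the paper. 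But this does \emph{not} iterate: the cokernel of $M/u_1M\hookrightarrow \Hom_{\kr[Y]/u_1}(J/u_1J,(\kr[Y]/J)/u_1)$ is the $u_1$-torsion of $\ext^1_{\kr[Y]}(J,\kr[Y]/J)$, and flatness of $\kr[Y]/J$ gives no control over whether $u_2$ is a nonzerodivisor on that cokernel. So after killing $u_1$, the map on the reduction mod $u_2$ may already acquire a kernel. Base change for normal modules is not automatic even for flat families — this is exactly the phenomenon behind tangent-space jumping — so the ``routine Koszul bookkeeping'' is the step that fails.

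What rescues the statement is that the surjectivity hypothesis has to be fed in at every stage, not only at the end. The paper inducts on $\dim U$, killing one $u$-variable at a time. At each step, the one-variable injection above, \emph{combined with} the hypothesis that $Y^*$ already maps onto the bottom Hom module, forces that injection to be an isomorphism, whence graded Nakayama over the one-variable-smaller ring shows that $Y^*$ generates the Hom module there — which is precisely the surjectivity input needed for the next step. Thus the isomorphism you want is true, but as a \emph{consequence} of the assumption on $U^*\to T^1(\kr[Y/U]/I)$, not of flatness alone. Replacing your second paragraph by the paper's descending one-variable-at-a-time induction (carrying the surjectivity along at each step) closes the gap, and the rest of your argument then goes through unchanged.
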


\begin{proof}
By the lemma above we know that the composition
\begin{equation*}
 Y^* \te \kr[Y] \mto{\hbeta} \Hom_{\kr[Y]}(J, \kr[Y]/J) \mto{q} 
\Hom_{\kr[Y/U]}(I, \kr[Y/U]/I)
\end{equation*} 
is surjective.
We shall show by induction on $U$ that whenever the composition
$q \circ \hbeta$ is surjective, then $\hbeta$ is surjective.
This will prove that $J$ is rigid.

Let $V$ be a subspace of $U$ of codimension one, homogeneous for the
grading. Let $\ov{J}$ be $J \te_{\kr[Y]} \kr[Y/V]$. 
By base extension $\kr[Y/V]/\ov{J}$ is flat over $\kr[Y/V]$.
We have a factorization
\begin{equation*}
 Y^* \te \kr[Y] \mto{\hbeta} \Hom_{\kr[Y]}(J, \kr[Y]/J) \mto{q_1} 
\Hom_{\kr[Y/V]}(\ov{J}, \kr[Y/V]/\ov{J}) \mto{r}
\Hom_{\kr[Y/U]}(I, \kr[Y/U]/I)
\end{equation*} 
By induction, it is sufficient to prove that $q_1 \circ \hbeta$ is
surjective. Let us rename $Y/V$ as $\oY$. 
Note that $U/V = (u)$ generated by one element. We must then
show that if the composition
\begin{equation*}
 Y^* \te \kr[Y] \mto{\hbeta_1} \Hom_{\kr[\oY]}(\oJ, \kr[\oY]/\oJ) \mto{r} 
\Hom_{\kr[\oY/(u)]}(I, \kr[\oY/(u)]/I)
\end{equation*} 
is surjective, then the map $\hbeta_1$ is surjective.
Let $\phi$ in $\Hom_{\kr[\oY]}(\oJ, \kr[\oY]/\oJ)$ map to zero in 
$\Hom_{\kr[\oY/(u)]} (I, \kr[\oY/(u)]/I)$. We first show that $\phi = u \psi$
for some $\psi \in  \Hom_{\kr[\oY]}(\oJ, \kr[\oY]/\oJ)$.
Let $\{ f_i \}$ be a generating set for $\oJ$ and suppose
$f_i \mapsto u g_i$ by $\phi$. Let $\sum_i r_if_i$ be a relation between the 
$f_i$. Then $ u \sum_i r_i g_i$ is zero in $\kr[\oY]/\oJ$. But due to flatness
of  $\kr[\oY]/\oJ$ over $\kr[u]$, the element $u$ is a nonzero divisor.
Hence $\sum_i r_i g_i$ is zero in $\oJ$ and so $f_i \mapsto g_i$
gives an element $\psi \in \Hom_{\kr[\oY]}(\oJ, \kr[\oY]/\oJ)$.
So there is an injection
\[ \Hom_{\kr[\oY]}(\oJ, \kr[\oY]/\oJ) \te_{\kr[\oY]} \kr[\oY/(u)] \hookrightarrow
\Hom_{\kr[\oY/(u)]} (I, \kr[\oY/(u)]/I). \]
But this must also be a surjection since the composition $r \circ \hbeta_1$
is surjective. Hence the above is an isomorphism, and so 
\[ \Hom_{\kr[\oY]}(\oJ, \kr[\oY]/\oJ) \te_{\kr[\oY]} \kr \iso
\Hom_{\kr[\oY/(u)]} (I, \kr[\oY/(u)]/I) \te_{\kr[\oY/(u)]} \kr . \]
We apply Nakayama's lemma applied to the finitely generated module
$\Hom_{\kr[\oY]}(\oJ, \kr[\oY]/\oJ)$ over the positively graded ring $\kr[\oY]$.
Since the image of $Y^*$ generates $\Hom_{\kr[\oY/(u)]} (I, \kr[\oY/(u)]/I)$
its image must then also generate $\Hom_{\kr[\oY]}(\oJ, \kr[\oY]/\oJ)$.
\end{proof}

\begin{corollary} \label{cor:Rigid} 
The ideal $J(2,P) \sus B(2,P)$ is a rigid ideal.
\end{corollary}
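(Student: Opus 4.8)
The plan is to obtain this as a direct application of Theorem \ref{thm:Rigid}. Take $Y$ to be the span of all the variables of $B(2,P)$, let $U \sus Y$ be the subspace spanned by the variables $u_{\empt,\rho}$ and $u_{q,p}$, and put $X = Y/U$, the span of the variables $p_1,p_2$ for $p \in P$. Then $\kr[Y] = B(2,P)$, $\kr[Y/U] = \kr[x_{[2]\times P}]$ (which we denote $S$ for short), $J = J(2,P)$ and $I = L(2,P)$. The $\ZZ([2]\times P)$-grading on $B(2,P)$ is positive by Section \ref{sec:Grad}, and both $U$ and $J(2,P)$ are homogeneous for it: the first because each $u_{q,p}$ was assigned a degree, the second by Proposition \ref{pro:Graded}. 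By Theorem \ref{thm:Flat} the ring $B(2,P)/J(2,P)$ is flat over $B = \kr[U]$, so $J(2,P)$ is a deformation of $L(2,P)$ over $\kr[U]$. Thus the only remaining point is to verify the hypothesis of Theorem \ref{thm:Rigid}: that the composite
\[ U^* \mto{\alpha} \Hom_S(L,S/L) \mto{\tau} T^1(S/L) \]
is surjective, where $\alpha$ classifies (via Proposition \ref{pro:DefoTangent}) the first order deformation obtained by base change along $\kr[U] \rarr \kr[U]/(U)^2$.

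To compute $\alpha$ I would read off, for each generator $p_1q_2 - T(p)S_p(q)$ of $J(2,P)$, its part of degree one in the $u$-variables. Setting all $u$-variables to zero, the parent column $[-u_{a,b^j}]_j$ and the off-diagonal entries $S_{b^i}T_{b^i}(b^j)$ ($i\neq j$) of $M(a)$ all vanish, so that $D(a)^a|_{u=0} = \prod_i b^i_1$ (the product over the children $b^i$ of $a$), whereas $D(a)^{b^i}|_{u=0} = 0$ for every child $b^i$, since the corresponding maximal minor still contains the parent column. Because $S_p(q) = R(p,q)D(q)^q$ with $R(p,q) = \prod_{p\leq x \prec y \leq q} D(x)^y$, it follows that $S_p(q)|_{u=0} = 0$ whenever $p < q$, and $S_p(p)|_{u=0} = \prod_i b^i_1$. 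Since each $T(p)$ is homogeneous of degree one in the $u$-variables, the part of degree one in the $u$-variables of $p_1q_2 - T(p)S_p(q)$ vanishes for $p < q$, while for $p=q$ it equals $-T(p)\prod_i b^i_1$. The coefficient of $u_{q,p}$ in $T(p)$ is $q_2$, and $T(\rho) = u_{\empt,\rho}$; hence, reading off coefficients, $\alpha(u_{q,p}^*)$ is the homomorphism sending $p_1p_2 \mapsto -q_2\prod_i b^i_1$ and all other generators of $L$ to zero, while $\alpha(u_{\empt,\rho}^*)$ sends $\rho_1\rho_2 \mapsto -\prod_i b^i_1$.

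Up to sign these are precisely the homomorphisms which Corollary \ref{cor:CotangRP} exhibits as a generating set of $T^1(S/L)$. Hence $\tau\circ\alpha$ is surjective, and Theorem \ref{thm:Rigid} gives that $J(2,P) \sus B(2,P)$ is a rigid ideal.

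The step I expect to require the most care is the identification of $\alpha$ in the second paragraph: one must check that no cancellation among the various $u$-linear terms occurs, that $\alpha(u_{q,p}^*)$ genuinely kills every generator of $L$ other than $p_1p_2$ (which uses that $T(p')$ involves only the variables $u_{q',p'}$ with second index $p'$, so that the degree-one part of $T(p')S_{p'}(p')$ is $T(p')$ times the constant term $\prod b^i_1$ of $S_{p'}(p')$), and that the coefficient of $u_{q,p}$ in $T(p)$ is uniformly $q_2$. This last point rests on the tree structure of the Hasse diagram, which forces the index pairs $(q,p)$ occurring in the construction to be exactly those with the meet of $q$ and $p$ equal to the parent of $p$, each arising from a single summand: the parent contributes $T_a(b) = -a_2u_{a,b}$ and a sibling $c$ contributes $T_c(b) = -\sum_{q\geq c} q_2u_{q,b}$. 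Everything else is a formal consequence of Theorems \ref{thm:Flat} and \ref{thm:Rigid} and Corollary \ref{cor:CotangRP}.
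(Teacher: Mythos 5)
Your proposal is correct and is essentially the paper's intended argument: the paper states Corollary~\ref{cor:Rigid} without further proof because it is exactly the application of Theorem~\ref{thm:Rigid} to $Y = X \oplus U$, $J = J(2,P)$, $I = L(2,P)$, using the positive $\ZZ([2]\times P)$-grading of Section~\ref{sec:Grad}, the flatness from Theorem~\ref{thm:Flat}, and the computation of $T^1$ in Corollary~\ref{cor:CotangRP}. Your reading-off of $\alpha$ from the $u$-linear parts of the generators of $J(2,P)$ (diagonal of $M(a)$ surviving at $u=0$, parent column and off-diagonal entries vanishing, so that $D(a)^a|_{u=0}=\prod b^i_1$ and $D(a)^{b^i}|_{u=0}=0$) is precisely the link the paper leaves implicit, and your checks that $T(p')$ only involves $u_{\cdot,p'}$ and that the pairs $(q,p)$ arising are exactly those with $q\wedge p = $ parent of $p$ are the right points to verify.
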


\section{The multigraded Hilbert schemes}

In the previous section we established that the ideals $J(2,P)$
were rigid. This is a property of (infinitesimal) deformation theory,
concerned with first order deformations.
This section is concerned with the global deformation family. 
The moduli spaces of families of quotient rings of a polynomial ring,
are the Hilbert schemes. We shall establish that the family of 
quotient rings $B(2,P)/J(2,P)$ by coordinate chages 
maps dominantly onto any component of 
the Hilbert scheme containing $L(2,P)$. In order to have a 
Hilbert scheme, we need a grading on the ring $B(2,P)$. 
We shall follow the most general such approach, that of Haiman and
Sturmfels considering multigraded Hilbert schemes \cite{HaSt}.
Continuing the setting of Section \ref{sec:Rigid},
we assume that $Y$ is graded by an abelian group $A$. 
Then $A$ gives a grading on the polynomial ring $\kr[Y]$. 
We assume that $U$ is a homogeneous
subspace of $Y$, and that $J$ is a homogeneous ideal 
for this $A$-grading. The grading on $\kr[Y/U]/I$ is {\it admissible}
if for each degree $a$, the graded piece $(\kr[Y/U]/I)_a$ is 
a finite-dimensional vector space.

\subsection{The generic coordinate change}
Choose a finite subspace $T \sus \Hom_{\kr}(Y, \kr[Y/U])_0$.
We get a map 
\[ Y \rarr \kr[Y/U] \te_\kr T^* \rarr \kr[Y/U] \te_\kr \kr[T^*], \]
giving a morphism of algebras
\begin{equation} \label{eq:DefoEq10}
\kr[Y] \rarr \kr[Y/U] \te_\kr \kr[T^*].
\end{equation}
Each $t \in T$ corresponds to a point in $\Spec \kr[T^*]$ or a map 
$\kr[T^*] \rarr \kr$. This induces from \eqref{eq:DefoEq10} a map
$\kr[Y] \rarr \kr[Y/U]$. This is the natural map coming from 
$t \in \Hom_{\kr}(Y, \kr[Y/U])_0$.
The map \eqref{eq:DefoEq10} induces the map
\begin{equation*} \label{eq:HilbTau}
\tau : \kr[Y] \mto{\Delta} \kr[Y \oplus Y] \mto{\gamma} 
\kr[Y/U] \te_\kr \kr[Y]
\mto{\beta} \kr[Y/U] \te_\kr \kr[Y/U] \te_\kr \kr[T^*] \mto{\alpha} 
\kr[Y/U] \te_\kr \kr[T^*], 
\end{equation*}
where in the last map we have used the multiplication on $\kr[Y/U]$.
Let $\tJ = \tau(J)$. Note that the fiber over $0 \in \Spec \kr[T^*]$
is $I \sus \kr[Y/U]$. 
The ideal $\tJ \sus \kr[Y/U] \te_\kr \kr[T^*]$ is the ideal we
get from $J$ by performing a generic coordinate change of $J$
and then restricting to $\kr[Y/U]$.

\begin{example}
Let $Y$ be $\langle x_1,x_2,u \rangle$ where $x_1$ and $x_2$ have
degree $1$ and $u$ has degree $2$. The subspace $U$ is generated by $u$.
Let $T = \Hom(\langle x_1,x_2,u \rangle, \kr[x_1,x_2,u])_0$.
The space $T$ is eight-dimensional. A basis of $T$ are the maps
\begin{align*} t_{11} : \,\,  & x_1 \mapsto x_1, \quad x_2 \mapsto 0, 
\quad u \mapsto 0 \\
t_{12} : \,\, & x_1 \mapsto x_2, \quad x_2 \mapsto 0, \quad u \mapsto 0 \\
\cdots \\
t_{u,22} : \,\, & x_1 \mapsto 0, \quad x_1 \mapsto 0, \quad, u \mapsto x_2^2
\end{align*}
The basis of $T$ is (the meaning of the maps should be clear from the above)
\[ t_{11}, \, t_{12}, \, t_{21},\, t_{22},\, t_{u,u},\, t_{u,11},\, 
t_{u,12},\, t_{u,22}. \]
and their dual elements give a basis for $T^*$.
Denote the other copy of $Y$
by $\langle y_1,y_2,v \rangle$

We consider the homogeneous polynomial
$x_1^2x_2 + ux_1$ in $\kr[Y]$.
Applying the map $\tau$ this becomes 
\begin{align*} 
x_1^2x_2 + ux_1 \overset{\Delta} \mapsto & (x_1+ y_1)^2(x_2 + y_2) + 
(u+v)(x_1 + y_1) \\ 
\overset{\alpha \circ \beta \circ \gamma} \mapsto
& (x_1 + t_{11}^*x_1 + t_{12}^*x_2)^2(x_2 + t_{21}^*x_1 + t_{22}^*x_{22})\\
+ & (t_{u,11}^*x_1^2 + t_{u,12}^*x_1x_2 + t_{u,22}^*x_2^2)(x_1 + t_{11}^*x_1 + 
t_{12}^*x_2). 
\end{align*}
This is the form we get by performing a generic coordinate change
on the polynomial $x_1^2x_2 + ux_1$ and then taking the image in 
the quotient ring $\kr[Y/U] = \kr[x_1,x_2]$
\end{example}

\begin{proposition}  \label{pro:DefoOpenflat} 
Suppose the $A$-grading on $\kr[Y/U]/I$ is admissible.
There is a localization $\kr[T^*]_f$ with $f(0) \neq 0$ such
that $\tJ_f \sus \kr[Y/U] \te_\kr \kr[T^*]_f$ is flat over $\kr[T^*]_f$.
\end{proposition}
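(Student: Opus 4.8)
The plan is to prove the slightly stronger statement that the quotient ring $M := (\kr[Y/U]\te_\kr \kr[T^*])/\tJ$ becomes flat over $R := \kr[T^*]$ after inverting a single element of $R$ nonzero at $0$; flatness of $\tJ_f$ itself then follows at once from the short exact sequence $0 \rarr \tJ_f \rarr \kr[Y/U]\te_\kr R_f \rarr M_f \rarr 0$, whose middle term is free over $R_f$. The argument splits in two: first, flatness of $M$ over $R$ along the fibre over $0$; second, spreading this to a Zariski neighbourhood of $0$ in $\Spec R$.

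\emph{Step 1: $M$ is flat over $R$ at $0$.} Choose generators $f_1,\dots,f_k$ of $J$; then $\tau(f_1),\dots,\tau(f_k)$ generate $\tJ$, and since $\tau$ reduces at $0$ to the quotient map $\kr[Y]\rarr \kr[Y/U]$, their reductions there are generators $\bar f_1,\dots,\bar f_k$ of $I$ and $M\te_R\kr = \kr[Y/U]/I$. Because $\kr[Y]/J$ is flat over $\kr[U]$, the lifting-of-relations criterion (\cite[Thm.A.10]{Ser}; cf. Proposition~\ref{pro:GradedFlatlift}) shows that every relation $\sum_i \bar r_i \bar f_i = 0$ over $\kr[Y/U]$ lifts to a relation $\sum_i r_i f_i = 0$ over $\kr[Y]$ with $r_i \equiv \bar r_i$ modulo $U$. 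Applying the homomorphism $\tau$ gives $\sum_i \tau(r_i)\tau(f_i) = \tau(\sum_i r_i f_i) = 0$ in $\kr[Y/U]\te_\kr R$, and $\tau(r_i)$ reduces at $0$ to $\bar r_i$; so every relation among $\bar f_1,\dots,\bar f_k$ lifts to a relation among $\tau(f_1),\dots,\tau(f_k)$. Invoking \cite[Thm.A.10]{Ser} once more, now for $\tJ$ over the local ring $R_{\mathfrak m_0}$ where $\mathfrak m_0 = (T^*)$ is the maximal ideal corresponding to $0$, we conclude that $M$ is flat over $R$ at every point of $\Spec(\kr[Y/U]\te_\kr R)$ lying over $\mathfrak m_0$.

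\emph{Step 2: spreading out.} Write $A = \kr[Y/U]\te_\kr R$. Since $Y$ is finite dimensional, $A$ is Noetherian and of finite type over $R$, and $M$ is a finite $A$-module, so by openness of the flat locus the non-flat locus $Z := \{\mathfrak q \in \Spec A : M \text{ is not flat over } R \text{ at } \mathfrak q\}$ is closed in $\Spec A$; by Step~1 it is disjoint from the fibre $\pi^{-1}(\mathfrak m_0)$ of $\pi\colon \Spec A \rarr \Spec R$. Now $M$ is graded by $\ZZ([2]\times P)$ and $R = \kr[T^*]$ sits in degree $0$, so $Z$ is a cone: it is defined by a homogeneous ideal of $A$. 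As this grading is positive (Section~\ref{sec:Grad}) we have $A_0 = R$, hence a vertex section $\sigma\colon\Spec R \hookrightarrow \Spec A$, dual to $A \rarr A/A_{>0} = R$, with $\pi\circ\sigma = \id$; homogeneity of $Z$ gives $\pi(Z) = \sigma^{-1}(Z)$, which is therefore \emph{closed} in $\Spec R$, and it misses $\mathfrak m_0$ because $\sigma(\mathfrak m_0)\in\pi^{-1}(\mathfrak m_0)\sus \Spec A\setminus Z$. Choosing $f\in R$ vanishing on $\pi(Z)$ with $f(0)\neq 0$ gives $\pi^{-1}(\{f\neq 0\})\cap Z = \emptyset$, i.e. $M_f$ is flat over $R_f = \kr[T^*]_f$, and hence so is $\tJ_f$.

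I expect Step~2 to be the main obstacle. Flatness of $M$ over all of $\Spec\kr[T^*]$ genuinely fails in general --- already with $\dim Y = 3$ (e.g. $J = (x_1x_2 + u)$, $U = \langle u\rangle$, and a $T$ producing $\tJ = ((1-s)x_1x_2)$) the non-flat locus is a hyperplane avoiding $0$ --- so one cannot simply appeal to openness of the flat locus in the base, and Grothendieck generic flatness only yields flatness over a dense open subset that need not contain $0$. What saves the day is that the image of the non-flat locus, a priori only constructible, is forced to be \emph{closed} by the conical (graded) structure of the family over $\kr[T^*]$; this is exactly what lets the localization be centred at $0$. (The admissibility hypothesis is what keeps the fibres of the resulting flat family inside the multigraded Hilbert scheme; for the flatness assertion proper it serves, if at all, only to make each graded piece of $M$ a finite $R$-module, which holds automatically once the grading is positive.)
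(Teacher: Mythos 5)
Your Step 1 is essentially the paper's argument: lifting of relations from $\kr[Y/U]$ to $\kr[Y]$ (using flatness of $J$ over $\kr[U]$), transported by $\tau$, gives flatness of $\tJ$ at the origin. The paper works modulo $(T^*)^p$ via the local criterion; you invoke the local ring version directly, which is fine in spirit although one should be a little careful since $M$ is not a finite $\kr[T^*]$-module (the paper handles this by working with the graded pieces $M_a$, which are finite over $\kr[T^*]$ by admissibility). That is a presentational rather than mathematical issue.

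Step 2, however, contains a genuine gap, and it is exactly at the point you yourself flag as the heart of the matter. You need the non-flat locus $Z\sus\Spec(\kr[Y/U]\te\kr[T^*])$ to be cut out by a homogeneous ideal for a \emph{positive} grading in which $\kr[T^*]$ sits in degree $0$, so that the vertex section $\sigma$ exists and $\pi(Z)=\sigma^{-1}(Z)$ is closed. But no single grading available here has both properties. The generic coordinate change $\tau$ is built from $T\sus\Hom_\kr(Y,\kr[Y/U])_0$, where the subscript $0$ refers to the \emph{$A$}-grading, not the fine $\ZZ([2]\times P)$-grading; an $A$-degree-preserving map $Y\to\kr[Y/U]$ will typically send a variable to a monomial of different $\ZZ([2]\times P)$-degree (otherwise $T$ would only contain scalar rescalings and $\tau$ would be trivial). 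Consequently $\tJ=\tau(J)$ is \emph{not} homogeneous for the $\ZZ([2]\times P)$-grading on $\kr[Y/U]\te\kr[T^*]$ with $\kr[T^*]$ in degree $0$, so $M$ is not $\ZZ([2]\times P)$-graded in the way your cone argument requires. The grading that $\tau$ does preserve, the $A$-grading, has $\kr[T^*]$ in degree $0$ but is not assumed positive on $\kr[Y/U]$ (only admissibility of $\kr[Y/U]/I$ is hypothesized, and admissibility of a quotient does not force positivity of the ambient polynomial ring). In addition, even if positivity were available, the claim that the non-flat locus is defined by a homogeneous ideal is asserted rather than proved; it holds because the non-flat locus is invariant under the torus action, but this needs an argument, particularly over a non-algebraically-closed field.

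The paper sidesteps all of this by a different mechanism: it uses \cite[Prop.3.2]{HaSt}, which for an admissible $A$-grading produces a \emph{finite} set of degrees $D$ that controls the Hilbert function. One then shrinks $\Spec\kr[T^*]$ so that $(M_a)_f$ is free of the right rank for $a\in D$, and a Gr\"obner degeneration argument shows that this already forces all graded pieces to have the correct rank, whence flatness. That argument uses only admissibility, which is precisely the hypothesis, whereas your spreading-out step implicitly promotes admissibility to positivity of a grading that $\tau$ does not preserve. To repair your proof along its present lines you would either have to restrict the generality of the proposition (assume the $A$-grading positive on $\kr[Y/U]$ and prove torus-invariance of the non-flat locus), or replace the cone argument by the Haiman--Sturmfels finiteness argument.
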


\begin{proof}
Let $X = Y/U$ and fix a splitting $Y/U \rarr Y$ such that 
$Y = X \oplus U$. 
The elements in $T$ have degree $0$. 
Note that for each degree $a \in A$ then $(\kr[X,T^*]/\tilde{J})_a$ is
a finitely generated $\kr[T^*]$-module.
 


\medskip
Let $\mm$ be the maximal ideal in $\kr[T^*]$ corresponding to
the origin. We obtain a localized ideal $(\tJ)_{\mm} \sus \kr[X,T^*]_{\mm}$.
We first show that $((\kr[X,T^*]/\tJ)_a)_\mm$ is flat 
over the local ring $\kr[T^*]_{\mm}$.
By the local criterion of flatness, Theorem A.5 in \cite[App.A]{Ser}, 
it will be enough to show
that $\kr[X,T^*]/((T^*)^p + \tJ)_a$ is flat over
$\kr[T^*]/(T^*)^p$ for each natural number $p$.
(Strictly speaking this theorem
applies to the situation of a local homomorphism of local noetherian
rings, but the argument works just as well when the 
codomain ring has positive grading.)

Let the $f_k$ form a generating set for $J$.
We write $f_k^\prime$ for their images in $J/ (U)^p \cdot J \sus \kr[X,U]/(U)^p $.
Let $\ov{f_k}$ denote the image in $I \sus \kr[X]$, and
$\hat{f_k}$ the images of $f_k$ in $((T^*)^p + \tJ)/(T^*)^p$. 
Due to flatness of the deformation over $\kr[U]/(U)^p$, all relations
$0 = \sum_k \ov{r_k}\ov{f_k}$ in $\kr[X]$ lift to a relation 
$0 = \sum_k r_k^\prime f_k^\prime$ in $\kr[Y]/(U)^p$. Substituting 
\[ x_j \mapsto x_j + \sum_{\deg(m) = \deg(x_j)} t_{j,m}^* m, \quad
u_k \mapsto \sum_{\deg(u_k) = \deg(n)} t_{k,n}^* n, \]
we get a relation
\[ 0 = \sum_k \hat{r_k} \hat{f_k} \text{ in } \kr[X,T^*]/(T^*)^p \]
Hence all relations for $I$ lift to relations for $((T^*)^p + \tJ)/(T^*)^p$.
By Corollary A.11 in \cite[App.A]{Ser}
the quotient ring $\kr[X,T^*]/((T^*)^p + \tJ)$ is flat over $\kr[T^*]/(T^*)^p$.
Hence $\tJ_{\mm} \sus \kr[X,T^*]_{\mm}$ is a deformation of
$I$, flat over $\kr[T^*]_{\mm}$. 

\medskip
In the following we use: Let $M$ be a finitely generated module over an integral
domain $R$ and $p \in \Spec R$. i) If the localization $M_p$ is a free 
$R_p$ module of rank $r$, then there is an open subset $\Spec R_f \sus \Spec R$
containing $p$ 
such that $M_f$ is free of rank $r$ on $R_f$. ii) If for some $p \in \Spec R$, 
the fiber $M_{k(p)}$ is generated by $r^\prime$ elements, there is an open 
subset $\Spec R_g  \sus \Spec R$ containing $p$ such that $M_g$ is generated 
on $R_g$ by $r^\prime$ elements. Hence $r^\prime \geq r$ since 
$\Spec R_g$ and $\Spec R_f$ intersect nonempty. 
(They both contain the zero ideal).

We also use the following \cite[Prop.3.2]{HaSt}:

\medskip
{\it Let $\kr[X]$ be an $A$-graded polynomial ring, 
and $h: A \rarr \NN$ a Hilbert function.
There is a finite subset of degrees $D \sus A$ such that
the following holds.
Let $I \sus \kr[X]$ be a monomial ideal  
which is i)  generated in degrees $D$ 
and ii) its Hilbert function $h_I$ has $h_I(a) = h(a)$ for 
$a \in D$.  Then $h_I(a) = h(a)$ for all $a \in A$.}

\medskip
Let $h : A \rarr \NN$ be the Hilbert function of $\kr[X]/I$, 
and let $D$ be a finite set of degrees given by the above
proposition. Every $((\kr[X,T^*]/\tJ)_a)_{\mm}$ is a free 
$\kr[T^*]_{\mm}$-module of finite rank $h(a)$. We may find an elements $f_a
\in \kr[T^*]$ with $f(0) \neq 0$ such that $((\kr[X,T^*]/\tJ)_a)_{f_a}$
is a free module of rank $h(a)$ for $a \in D$. Let $f = \prod_{a \in D} f_a$.

Suppose now there is some $a_0 \in A$ such that $((\kr[X,T^*]/\tJ)_{a_0})_{f}$
is not locally free. The free $\kr[T^*]_{\mm}$-module 
$((\kr[X,T^*]/\tJ)_{a_0})_{\mm}$ has rank $h(a_0)$. 
There must then be $t \in \Spec \kr[T^*]_f$ such
that the fiber $(\kr[X,T^*]/\tJ)_{\kr(t)}$ has dimension $> h(a_0)$ in 
degree $a_0$. We may write $(\kr[X,T^*]/\tJ)_{\kr(t)} = \kr(t)[X]/J^\prime$
where $J^\prime$ is the image of $\tJ_{\kr(t)}$ in $\kr(t)[X]$. 
Fix a monomial order on $\kr(t)[X]$. Let $M$ be 
ideal generated by the initial monomials of the ideal of 
$J^\prime \sus \kr(t)[X]$ in degrees $D$. Then $h_M(a) = h(a)$
for $a \in D$, but 
\[ h_M(a_0) \geq \dim_\kr ((\kr(t)[X]/J^\prime)_{a_0} > h(a_0).\]
This contradicts \cite[Prop.3.2]{HaSt} given above. 
Hence $((\kr[X,T^*]/J)_{f})_a$
is a locally free $\kr[T^*]_f$ module of rank $h(a)$ for every degree
$a \in A$.
\end{proof}

To the flat family 
$\tJ \sus \kr[Y/U] \te_\kr \kr[T^*]_f$ we apply the base change
$\kr[T^*]_f \rarr \kr[T^*]/(T^*)^2$. 


\begin{corollary} \label{cor:DehiKTf}
 Let $J^\prime \sus \kr[Y/U] \te_\kr\kr[T^*]/
(T^*)^2$ be the image of $J$ by this base change.
The ideal $J^\prime$ is a flat deformation of $I$ over $\kr[T^*]/(T^*)^2$. 
\end{corollary}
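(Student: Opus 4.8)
The plan is to deduce this as an essentially immediate consequence of Proposition \ref{pro:DefoOpenflat} together with the stability of flatness under arbitrary base change. All the substantive work has already been carried out there; what remains is to set up the base change map correctly and to keep track of which ideal is which. First I would make the base change explicit: since $f(0) \neq 0$, the element $f$ does not lie in the irrelevant maximal ideal $(T^*) \sus \kr[T^*]$, so its image in the local Artinian ring $\kr[T^*]/(T^*)^2$ (whose maximal ideal is $(T^*)/(T^*)^2$ and whose residue field is $\kr$) is a unit. By the universal property of localization the projection $\kr[T^*] \rarr \kr[T^*]/(T^*)^2$ therefore factors through a ring homomorphism $\kr[T^*]_f \rarr \kr[T^*]/(T^*)^2$, which makes $\kr[T^*]/(T^*)^2$ a $\kr[T^*]_f$-algebra.

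Next, Proposition \ref{pro:DefoOpenflat} gives that $(\kr[Y/U] \te_\kr \kr[T^*]_f)/\tJ_f$ is flat over $\kr[T^*]_f$. Applying $- \te_{\kr[T^*]_f} \kr[T^*]/(T^*)^2$ and using that a base change of a flat module is flat, we obtain that
\[ (\kr[Y/U] \te_\kr \kr[T^*]/(T^*)^2)/J^\prime \iso \big((\kr[Y/U] \te_\kr \kr[T^*]_f)/\tJ_f\big) \te_{\kr[T^*]_f} \kr[T^*]/(T^*)^2 \]
is flat over $\kr[T^*]/(T^*)^2$, where $J^\prime$ is the image of $\tJ$ (equivalently, the image of $J$ under $\tau$ composed with the surjection $\kr[Y/U] \te_\kr \kr[T^*] \rarr \kr[Y/U] \te_\kr \kr[T^*]/(T^*)^2$). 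This is precisely the ideal named in the statement: forming $\tau(J)$, localizing at $f$, and reducing modulo $(T^*)^2$ are compatible surjections of the respective ambient polynomial rings, so the three descriptions of $J^\prime$ coincide.

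Finally I would identify the special fibre. Reducing $\kr[T^*]/(T^*)^2$ modulo its maximal ideal is the same as reducing $\kr[T^*]$ modulo $(T^*)$, i.e. restricting to the point $0 \in \Spec \kr[T^*]$; since, as noted in the construction of $\tJ$, the fibre of $\tJ$ over $0$ is $I \sus \kr[Y/U]$, we get
\[ \big((\kr[Y/U] \te_\kr \kr[T^*]/(T^*)^2)/J^\prime\big) \te_{\kr[T^*]/(T^*)^2} \kr \iso \kr[Y/U]/I. \]
Combined with the flatness just established, this is exactly the assertion that $J^\prime$ is a flat deformation of $I$ over $\kr[T^*]/(T^*)^2$. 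There is no genuine obstacle here; the only point requiring (mild) care is the well-definedness of the homomorphism $\kr[T^*]_f \rarr \kr[T^*]/(T^*)^2$, which is what forces the condition $f(0) \neq 0$ in Proposition \ref{pro:DefoOpenflat} to be carried along.
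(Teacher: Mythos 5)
Your proof is correct and follows exactly the route the paper intends: the paper states the corollary with no proof, treating it as the immediate consequence of applying the base change $\kr[T^*]_f \rarr \kr[T^*]/(T^*)^2$ (which is well defined precisely because $f(0) \neq 0$ makes $f$ a unit in the local Artinian ring) to the flat family from Proposition \ref{pro:DefoOpenflat}. You have simply written out carefully the well-definedness of that base-change map and the identification of the special fibre, both of which the paper leaves implicit.
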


Note that by Lemma \ref{lem:DecoTtoR} the map corresponding
to this deformation:
\[ T \rarr \Hom_{\kr[Y/U]}(I, \kr[Y/U]/I) \] 
is the map obtained from the composition
\[ Y^* \rarr \Hom_{\kr[Y]}(J, \kr[Y]/J) \rarr 
\Hom_{\kr[Y/U]}(I,\kr[Y/U]/I) \]
by using the $\kr[Y/U]$-module structure on the right module.


\subsection{The multigraded Hilbert scheme}

We recall the multigraded Hilbert scheme as introduced by
Haiman and Sturmfels in \cite{HaSt}.
As before $A$ is  an abelian group and $X$ a
finite dimensional $A$-graded vectorspace over $\kr$,
so the polynomial ring $\kr[X]$ becomes $A$-graded. 
The Hilbert schemes come about by introducing 
a Hilbert function $h: A \rarr \NN$.
There is then a Hilbert
scheme $H_{\kr[X]}^h$ parametrizing all ideals $I \sus \kr[X]$ such that
$\dim_\kr (\kr[X]/I)_a = h(a)$ for $a \in A$. 
More precisely, let $\kalg$ be the category of $\kr$-algebras. 
Then $H_{\kr[X]}^h$ is a $\kr$-scheme 
which represents the point functor
\[ \hat H_{\kr[X]}^h : \kalg \rarr \sets  \]
where $\hat H_{\kr[X]}^h(R)$ is the set of all ideals $J \sus R[X]$ such that
$(R[X]/J)_a$ is a locally free $R$-module of rank $h(a)$.
In particular an ideal $I \sus \kr[X ]$ with Hilbert function $h$
corresponds to a $\kr$-point $\Spec \kr \mto{p} H_{\kr[X]}^h$
of the Hilbert scheme.

We may restrict $\hat H_{\kr[X]}^h$ to the category
$\kartalg$ of local artinian $\kr$-algebras with residue field $\kr$.
If $A \rarr \kr$ is the augmentation map, it induces 
\begin{equation} \label{eq:HilbAk}
\hat H_{\kr[X]}^h(A) \rarr \hat H_{\kr[X]}^h(\kr).
\end{equation} 
Let $\hat H_{I \sus \kr[X]}^{h}(A)$ be the inverse image of the element 
$(I \sus \kr[X])$ in the right set of \eqref{eq:HilbAk}. 
Thus $\hat H_{I \sus \kr[X]}^{h}$ is a subfunctor of $\Def{I}$, giving the
{\it $A$-graded deformations} of $I$. Again we may restrict 
$\hat H_{I \sus \kr[X]}^h $ to
$\kvect$ and we get the following specialization
of Proposition \ref{pro:DefoTangent}.

\begin{proposition} \label{pro:HilbTangent}
There is a natural isomorphism of functors between
\[ \hat H_{I \sus \kr[X]}^h , \, \, \Hom_{\kr}(-, \Hom_{\kr[X]}(I,\kr[X]/I)_0) : 
\kvect \rarr \sets .\]
Since $\hat H_{\kr[X]}^h$ on $\kalg$ is represented by the Hilbert 
scheme $H_{\kr[X]}^h$,the tangent space of this scheme at $I$ is 
$\Hom_{\kr[X]}(I,\kr[X]/I)_0$. 
\end{proposition}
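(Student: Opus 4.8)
The plan is to prove Proposition \ref{pro:HilbTangent} by combining the general deformation-theoretic isomorphism from Proposition \ref{pro:DefoTangent} with the bookkeeping of the $A$-grading, and then invoking representability of the multigraded Hilbert scheme from \cite{HaSt} for the tangent-space statement. First I would recall that Proposition \ref{pro:DefoTangent} already provides, for every finite-dimensional $V$, a natural bijection
\[
\Def{I}(\kV) \iso \Hom_\kr(V, \Hom_{\kr[X]}(I,\kr[X]/I)),
\]
where on the left a map $\phi: V \rarr \Hom_{\kr[X]}(I,\kr[X]/I)$ corresponds to the ideal $J = \{ f + \ov\phi(f) \mid f \in I\} \sus \kr[X]\te_\kr \kV$. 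The point is to cut this down to the $A$-graded subfunctor.

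Second, I would put the grading in place: give $V$ (hence $\kV$) the trivial $A$-grading concentrated in degree $0$, so $\kr[X]\te_\kr \kV$ is $A$-graded and $I\te_\kr\kV$ is a homogeneous ideal. The key observation is that the deformation $J$ corresponding to $\phi$ lies in $\hat H_{I\sus\kr[X]}^h(\kV)$ --- i.e.\ $(\kr[X]\te_\kr\kV / J)_a$ is free of rank $h(a)$ for all $a$, and $J\te_{\kV}\kr = I$ --- \emph{if and only if} $J$ is a homogeneous ideal for the $A$-grading. Indeed, since $\kV/(V^*)$ is flat (it is free) over $\kr$ and the special fibre $I$ has Hilbert function $h$, flatness of $(\kr[X]\te_\kr \kV)/J$ over $\kV$ already forces the Hilbert function in each degree to be $h(a)$ once $J$ is homogeneous; conversely an $A$-graded deformation must have a homogeneous defining ideal. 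Now $J$ is homogeneous exactly when for each homogeneous $f\in I$ the correction term $\ov\phi(f) = \sum_i v_i^*\,\phi(v_i)(f)$ is homogeneous of the same degree $\deg f$ as $f$; since the $v_i^*$ sit in degree $0$, this says precisely that each $\phi(v_i)$ is a degree-$0$ element of $\Hom_{\kr[X]}(I,\kr[X]/I)$, i.e.\ $\phi$ factors through $\Hom_{\kr[X]}(I,\kr[X]/I)_0$. This matching of subsets on both sides is clearly functorial in $V$, so restricting the isomorphism of Proposition \ref{pro:DefoTangent} yields the asserted natural isomorphism
\[
\hat H_{I\sus\kr[X]}^h \iso \Hom_\kr(-,\Hom_{\kr[X]}(I,\kr[X]/I)_0)
\]
of functors on $\kvect$.

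Third, for the final sentence about the tangent space of the scheme: by \cite{HaSt} the functor $\hat H^h_{\kr[X]}$ on $\kalg$ is represented by the scheme $H^h_{\kr[X]}$, and $I$ with Hilbert function $h$ gives a $\kr$-point $p$. The tangent space $T_p H^h_{\kr[X]}$ is by definition $\Hom_\kr(\Spec\kr[\epsilon], H^h_{\kr[X]})$ lying over $p$, which equals $\hat H^h_{I\sus\kr[X]}(\kr[\epsilon]) = \hat H^h_{I\sus\kr[X]}(\kr[V^*]/(V^*)^2)$ for $V=\kr$, and this is $\Hom_\kr(\kr,\Hom_{\kr[X]}(I,\kr[X]/I)_0) = \Hom_{\kr[X]}(I,\kr[X]/I)_0$ by the isomorphism just established. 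This completes the proof.

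I expect the only genuine subtlety --- the ``hard part'' --- to be the equivalence ``$J$ homogeneous $\iff$ $J$ defines an $A$-graded deformation with the correct Hilbert function $h$.'' One direction (a graded deformation has homogeneous ideal and, by flatness over the base together with the special fibre having Hilbert function $h$, has fibrewise Hilbert function $h$ in every degree) is straightforward; the content is that, conversely, homogeneity of $\ov\phi$ is enough to guarantee local freeness of $(\kr[X]\te_\kr\kV/J)_a$ of rank exactly $h(a)$ for every $a\in A$, which follows because over the Artinian local base $\kV$ flatness is equivalent to freeness and the graded pieces are finite-dimensional (admissibility of the $A$-grading on $\kr[X]/I$), so a length count against the closed fibre pins down the rank. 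Everything else is a routine unwinding of the functorial bijection from Proposition \ref{pro:DefoTangent}.
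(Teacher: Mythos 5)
Your proof is correct and follows the same route the paper takes: the paper gives no independent argument for Proposition \ref{pro:HilbTangent} but simply notes it is the specialization of Proposition \ref{pro:DefoTangent} obtained by restricting to the subfunctor $\hat H_{I\sus\kr[X]}^h \sus \Def{I}$, which is exactly what you do, with the extra bookkeeping (homogeneity of $J$ corresponds to $\phi$ factoring through the degree-$0$ part, and flatness over the artinian local base plus admissibility forces the graded pieces to be free of the correct rank) carefully spelled out rather than left implicit.
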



We now assume that $U \sus Y$ is graded by the abelian group $A$, 
and that
$I$ and $J$ are $A$-graded ideals in $\kr[Y/U]$ resp. $\kr[Y]$.
We assume the $A$-grading is admissible on 
$\kr[Y/U]/I$ and so we have a Hilbert function:
\[ h : A \rarr \NN, \quad h(a) = \dim_\kr (\kr[Y/U]/I)_a. \]
There is a map
\[ \Hom_{\kr}(Y, \kr[Y/U])_0 \rarr \Hom_{\kr}(Y, \kr[Y/U]/I)_0. \]
The right $\Hom$-space is a finite dimensional vector
space due to the grading being admissible.

\begin{theorem}
\label{thm:HilbSmooth} Suppose the $A$-grading on $\kr[Y/U]/I$ is admissible
and that the composition
\[ U^* \rarr 
\Hom_{\kr[Y/U]}(I,\kr[Y/U]/I)  \rarr T^1(\kr[Y/U]/I)\]
maps $U^*$ to a generating set of $T^1$.
Let $T$ be a finite dimensional subspace of $\Hom_{\kr}(Y, \kr[Y/U])_0$
which maps surjectively to $\Hom_{\kr}(Y, \kr[Y/U]/I)_0$.

Then the induced morphism from Theorem \ref{pro:DefoOpenflat}
\[ \Spec \kr[T^*]_f \rarr H_{\kr[Y/U]}^h \]
is surjective on tangent spaces at the origin $0 \in \Spec \kr[T^*]_f $.
\end{theorem}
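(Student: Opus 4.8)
The plan is to compute the two tangent spaces, identify the differential of the classifying morphism as (the restriction to $T$ of) the natural map $Y^*\te_\kr\kr[Y/U]\rarr\Hom_{\kr[Y/U]}(I,\kr[Y/U]/I)$, and then exhibit that differential as a composite of two surjections. The morphism $\Spec\kr[T^*]_f\rarr H^h_{\kr[Y/U]}$ in question is the one classifying, via the universal property of the Hilbert scheme, the flat family $\tJ_f\sus\kr[Y/U]\te_\kr\kr[T^*]_f$ produced in Proposition \ref{pro:DefoOpenflat}; since the fibre of $\tJ$ over $0\in\Spec\kr[T^*]_f$ is $I$, the origin maps to the point $[I]$. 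First I would record the two tangent spaces: as $f(0)\neq0$ the origin lies in $\Spec\kr[T^*]_f$ and the local ring there has cotangent space $T^*$, so the tangent space of $\Spec\kr[T^*]_f$ at the origin is $T$; the tangent space of $H^h_{\kr[Y/U]}$ at $[I]$ is $\Hom_{\kr[Y/U]}(I,\kr[Y/U]/I)_0$ by Proposition \ref{pro:HilbTangent}.

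Next I would identify the differential $d_0:T\rarr\Hom_{\kr[Y/U]}(I,\kr[Y/U]/I)_0$. Composing the classifying morphism with the base change $\kr[T^*]_f\rarr\kr[T^*]/(T^*)^2$ yields exactly the first order deformation $J'$ of Corollary \ref{cor:DehiKTf}, and a tangent vector at the origin is recovered by further specializing $\kr[T^*]/(T^*)^2\rarr\kr[\epsilon]$. By the remark following Corollary \ref{cor:DehiKTf}, together with Lemma \ref{lem:DecoTtoR}, the element of $\Hom_\kr\big(T,\Hom_{\kr[Y/U]}(I,\kr[Y/U]/I)_0\big)$ attached to $J'$ is the restriction to $T\sus\Hom_\kr(Y,\kr[Y/U])_0=(Y^*\te_\kr\kr[Y/U])_0$ of the $\kr[Y/U]$-linear map
\[ \kappa:Y^*\te_\kr\kr[Y/U]\rarr\Hom_{\kr[Y/U]}(I,\kr[Y/U]/I) \]
induced by the composite $Y^*\mto{\beta}\Hom_{\kr[Y]}(J,\kr[Y]/J)\mto{q}\Hom_{\kr[Y/U]}(I,\kr[Y/U]/I)$ of Lemma \ref{lem:DefoYSurj}, where $\beta$ is the derivation map of Lemma \ref{lem:DefoDerivation}. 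Unwinding $\beta$, the map $\kappa$ sends $y^*\te g$ to the homomorphism $f\mapsto g\,(\partial f/\partial y)\bmod I$. Thus $d_0=\kappa|_T$.

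Then I would prove $d_0$ is surjective by factoring it through two surjections. Since $g\,(\partial f/\partial y)\in I$ whenever $g\in I$, the map $\kappa$ annihilates $Y^*\te_\kr I$ and so factors through $Y^*\te_\kr(\kr[Y/U]/I)=\Hom_\kr(Y,\kr[Y/U]/I)$, say as $\kappa=\bar\kappa\circ\pi$ with $\pi:\Hom_\kr(Y,\kr[Y/U])\rarr\Hom_\kr(Y,\kr[Y/U]/I)$. Because each $\beta(x_i^*)=\partial/\partial x_i$ is $A$-homogeneous, $\kappa$ is a degree-preserving map of $A$-graded modules; and the hypothesis that $U^*$ maps onto a generating set of $T^1(\kr[Y/U]/I)$ is exactly the hypothesis of Lemma \ref{lem:DefoYSurj}, which gives that the image of $Y^*$ under $q\circ\beta$ generates $\Hom_{\kr[Y/U]}(I,\kr[Y/U]/I)$ as a $\kr[Y/U]$-module, i.e. $\kappa$ is surjective. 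Restricting to degree $0$, the map $\bar\kappa_0:\Hom_\kr(Y,\kr[Y/U]/I)_0\rarr\Hom_{\kr[Y/U]}(I,\kr[Y/U]/I)_0$ is then surjective. Finally $T\rarr\Hom_\kr(Y,\kr[Y/U]/I)_0$ is surjective by the hypothesis of the theorem on $T$, and $d_0$ is the composite of this surjection with $\bar\kappa_0$; hence $d_0$ is surjective, which is the assertion.

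The main obstacle is the middle step: verifying that the differential $d_0$ really equals the restriction of $\kappa$ to $T$. This requires carefully threading the generic coordinate change $\tau$, the localization at $f$, and the base change to $\kr[T^*]/(T^*)^2$ through the functor-of-points dictionary of Propositions \ref{pro:DefoTangent} and \ref{pro:HilbTangent}, Lemma \ref{lem:DecoTtoR}, and Corollary \ref{cor:DehiKTf}, and checking that all these identifications are mutually compatible. Once that identification is in hand, the surjectivity is the short graded diagram chase on Lemma \ref{lem:DefoYSurj} sketched above.
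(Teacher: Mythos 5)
Your proposal is correct and takes essentially the same route as the paper: identify the tangent spaces via Proposition \ref{pro:HilbTangent}, identify the differential at the origin via the remark after Corollary \ref{cor:DehiKTf} and Lemma \ref{lem:DecoTtoR} as the restriction to $T$ of $Y^*\te\kr[Y/U]\rarr\Hom_{\kr[Y/U]}(I,\kr[Y/U]/I)$, and then factor this differential through $\Hom_\kr(Y,\kr[Y/U]/I)_0$ as a composite of two surjections, the second coming from Lemma \ref{lem:DefoYSurj}. Your extra unwinding of $\kappa$ and the explicit verification that it kills $Y^*\te I$ just spell out the factorization the paper invokes implicitly.
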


\begin{proof}
Let $\mm \sus \kr[T^*]_f$ be the maximal ideal corresponding to the
origin in $\kr[T^*]_f$. The morphism
\[ \Spec \kr[T^*]_f/\mm^2 \rarr H_{\kr[Y/U]}^h \] is then
\[ \Spec \kr[T^*]/(T^\ast)^{2} \rarr H_{\kr[Y/U]}^h. \]
By Proposition \ref{pro:HilbTangent} this corresponds to the map of
tangent spaces
\begin{equation} \label{eq:DefoT0} T \rarr \Hom_{\kr[Y/U]}(I, \kr[Y/U]/I)_0.
\end{equation}
By the comment after Corollary \ref{cor:DehiKTf} and Lemma
\ref{lem:DecoTtoR} this map is 
obtained from the composition
\[ Y^* \rarr \Hom_{\kr[Y]}(J,\kr[Y]/J) \rarr \Hom_{\kr[Y/U]}(I,\kr[Y/U]/I) \]
as
\[ T \rarr \Hom(Y,\kr[Y/U]/I)_0 = (Y^* \te \kr[Y/U]/I)_0  \rarr
\Hom_{\kr[Y/U]}(I,\kr[Y/U]/I)_0. \]
Since the right map above is surjective by 
Lemma \ref{lem:DefoYSurj} the map \eqref{eq:DefoT0} on 
tangent spaces, is surjective.
\end{proof}

\begin{corollary}
The ideal $I$ is a smooth point on the Hilbert 
scheme $H_{\kr[Y/U]}^h$. The morphism $\Spec \kr[T^*]_f \rarr H_{\kr[Y/U]}^h$ 
is dominant on the component of the Hilbert scheme containing $I$.
So there is an open subset of the Hilbert scheme $H_{\kr[Y/U]}^h$
such that the ideals in this open subset are obtained from $J \sus \kr[Y]$
by a coordinate change, and then restricting to $\kr[Y/U]$.
\end{corollary}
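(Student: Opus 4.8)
The plan is to read off everything from Theorem \ref{thm:HilbSmooth}, under whose hypotheses the corollary is stated. By Proposition \ref{pro:DefoOpenflat} the ideal $\tJ_f \sus \kr[Y/U] \te_\kr \kr[T^*]_f$ is flat over $\kr[T^*]_f$ and has $I$ as its fibre over the origin, so via the universal property of the multigraded Hilbert scheme $H := H_{\kr[Y/U]}^h$ it produces a morphism $\psi : \Spec \kr[T^*]_f \rarr H$ with $\psi(0) = I$, and Theorem \ref{thm:HilbSmooth} says that the differential $d\psi_0 : T_0 \Spec \kr[T^*]_f \rarr T_I H$ is surjective. First I would prove that $I$ is a smooth point of $H$; then that $\psi$ is dominant onto the (then necessarily unique) component of $H$ through $I$; and finally I would exhibit the open subset claimed.

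To see that $I$ is smooth, I would examine the induced local homomorphism $\psi^\sharp : \mathcal{O}_{H,I} \rarr \mathcal{O} := \mathcal{O}_{\Spec \kr[T^*]_f,\,0}$. The target is a regular local ring, being a localisation of the polynomial ring $\kr[T^*]$, and the cotangent map $\mathfrak{m}_H/\mathfrak{m}_H^2 \rarr \mathfrak{m}_{\mathcal{O}}/\mathfrak{m}_{\mathcal{O}}^2$ is injective, being dual to the surjection $d\psi_0$. Completing and using the Cohen structure theorem, I would write $\hat{\mathcal{O}}_{H,I} = \kr[[z_1,\ldots,z_n]]/\mathfrak{a}$ with $n = \dim_\kr T_I H$ and $\hat{\mathcal{O}} = \kr[[s_1,\ldots,s_e]]$ with $e = \dim \kr[T^*]_f \geq n$; since the $z_i$ go to a basis of $\mathfrak{m}_H/\mathfrak{m}_H^2$, their images in $\hat{\mathcal{O}}$ have linearly independent linear parts in the $s_j$, so after a formal change of coordinates in $\hat{\mathcal{O}}$ I may assume $z_i \mapsto s_i$. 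Then the composite $\kr[[z_1,\ldots,z_n]] \rarr \hat{\mathcal{O}}_{H,I} \rarr \hat{\mathcal{O}}$ is injective, hence the surjection $\kr[[z_1,\ldots,z_n]] \rarr \hat{\mathcal{O}}_{H,I}$ is injective, so $\mathfrak{a} = 0$ and $\mathcal{O}_{H,I}$ is regular; that is, $I$ is a smooth point of $H$. (In a phrase: regularity descends along a local homomorphism into a regular local ring that is injective on Zariski cotangent spaces.)

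Once $I$ is a smooth point, $H$ is irreducible near $I$, so there is a single component $\mathcal{C}$ through $I$ and it is smooth at $I$. Now both the source $\Spec \kr[T^*]_f$ and, near $\psi(0)$, the target $H$ are smooth over $\kr$, and $d\psi_0$ is surjective; by the standard criterion that a morphism of finite-type $\kr$-schemes with smooth source and smooth target which is a submersion at a point is smooth there, $\psi$ is smooth on an open neighbourhood $V$ of $0$ in $\Spec \kr[T^*]_f$. Since $\Spec \kr[T^*]_f$ is irreducible, $V$ is irreducible; and since smooth morphisms are open, $\psi(V) \sus H$ is an irreducible open subset containing $I$, hence dense in $\mathcal{C}$, so $\psi$ is dominant onto $\mathcal{C}$. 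Finally, by the description of $\tJ$ recorded just before Proposition \ref{pro:DefoOpenflat}, the ideal parametrised by any point of $\psi(V)$ is obtained from $J \sus \kr[Y]$ by a coordinate change followed by restriction to $\kr[Y/U]$; so $\psi(V)$ is the open subset asserted in the statement.

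I expect the only non-routine ingredients beyond Theorem \ref{thm:HilbSmooth} to be the descent-of-regularity step in the second paragraph and the submersion criterion in the third, and both are standard; so the main obstacle is really just assembling these correctly, together with being careful that the universal property of the Hilbert scheme applies to the flat family $\tJ_f$ and that the fibres of $\tJ$ are indeed the coordinate-changed-then-restricted ideals, the latter already having been made explicit before Proposition \ref{pro:DefoOpenflat}.
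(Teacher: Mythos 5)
The paper states this corollary without a proof, treating it as an immediate consequence of Theorem \ref{thm:HilbSmooth}; your argument supplies exactly the standard justification (descent of regularity along a local homomorphism injective on cotangent spaces via the Cohen structure theorem, then the submersion criterion for smoothness of a morphism and openness of smooth maps) and is correct in every step. This is essentially the intended reasoning, just spelled out.
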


Let $Y \sus \hY$ be an inclusion of finite-dimensional $A$-graded 
vector spaces. Consider the ideal $(J) \sus \kr[\hY]$ generated
by $J$. Note that this identifies as $J \te_{\kr[Y]} \kr[\hY] 
\sus \kr[\hY]$. Similarly we get an ideal $(I) \sus \kr[\hY /U]$.
Let  $\hat h$ denote the Hilbert function of the quotient ring, 
if $h$ is the Hilbert function of the quotient ring of $I \sus \kr[Y/U]$.
The cotangent cohomology 
\[ T^1(\kr[\hY/U]/(I)) = T^1(\kr[Y/U]/I) \te_{\kr[Y]} \kr[\hY]. \]
The theorem and corollary above still applies to this situation.

\medskip
\noindent {\bf Applications.}
Let $\ZZ([2] \times P) \rarr A$ be a homomorphism of abelian groups.
We take $Y$ to be the space generated 
by the linear forms of $B(2,P) = \kr[x_{[2]\times P}]
\te_{\kr} \kr[u_{\empt,\rho}, u_{q,p}]$, so $Y$ is generated by the $x$ and 
$u$-variables in this ring, and $U$ the linear forms in 
$\kr[u_{\empt,\rho}, u_{q,p}]$.

\medskip
\noindent 1. 
If $A$ gives an admissible grading on $\kr[X]/L(2,P)$, then
for an open subset of the Hilbert scheme $H^h_{\kr[X]}$, the ideals in this
open subset come from a 
change of coordinates in $J(2,P)$ and then restricting the ideal to 
$\kr[X]$. 

\medskip
\noindent 2.
Let $Y \sus \hY$ be such that $\hY/Y \iso U$ as $A$-graded spaces, or
equivalently $\hY/U \iso Y$. (For instance we take a copy $V$ of $U$
and let $\hY = Y \oplus V$.) Suppose $\kr[\hY/U]/(L(2,P))$ is admissible
for the $A$-grading 
(which by Corollary \ref{cor:FlatHilbert}
is equivalent to $\kr[Y]/J(2,P)$ being admissible). 
Then there is an open subset of 
the Hilbert scheme $H^{\hat h}_{\kr[\hY/U]}$ (which identifies as 
$H^{\hat h}_{\kr[Y]}$) such that the ideals in this open subset are obtained
from a coordinate change of $(J(2,P)) \sus \kr[\hY]$ and then
restricting to $\kr[\hY/U]$. But since $(J(2,P))$ is generated by 
$J(2,P) \sus \kr[Y]$ and $\hY/U \iso Y$, this identifies as a 
coordinate change of $J(2,P)$, and so the Hilbert scheme component
of $H^{\hat h}_{\kr[Y]}$ has an open subset consisting of ideals obtained
from $J(2,P)$ by coordinate changes. 

\medskip
\noindent 3. Often the $A$-grading on $\kr[\hY/U]/(L(2,P))$ is not admissible,
but there is a space $Y \sus Y^+ \sus \hY$ such that 
$\kr[Y^+/U]/(L(2,P))$ is admissible.
For instance take $A = \ZZ$
and let the $p_i$ map to positive values in $\ZZ$. Some of the $u$-variables
typically map to negative values. Then $\kr[Y]$ is infinite-dimensional
in degree $0$.  
Let $\hY = Y \oplus V$ where $V$ is a copy of $U$ and let 
$U^+ \sus U$ and $U^- \sus U$ be the variables of non-negative and negative
degrees, and similarly for $V$.
Let 
\[ J^\prime(2,P) = J(2,P) \te_{\kr[Y]} \kr[Y/U^-] \]
so we are setting the $u$-variables of negative degrees equal to zero.
Let $Y^+ = Y \oplus V^+$. Then there is an open subset of the Hilbert scheme of 
$(L(2,P)) \sus \kr[Y^+/U]$ where the ideals in this open subset are
obtained from $(J(2,P)) \sus \kr[Y^+]$ by coordinate changes and 
then restricting to $\kr[Y^+/U]$. Since $Y^+/U$ only has elements
of positive degree, all elements of negative degree in $\kr[Y^+]$ map to zero.
Also $(J(2,P))$ is generated by $J(2,P) \sus \kr[Y]$. 
Since $Y/U^- \iso Y^+/U$, this is then really
simply a coordinate change of $J^\prime(2,P)$. 

\begin{example}
Let $P$ be the star poset with unique minimal element $a$ and $3$ 
maximal elements $b,c,d$.
Consider the standard $\mathbb{Z}$-grading on $\kr[x_{[2]\times P}]$.
Since $\deg(u_{\emptyset,a}) = a_1 + a_2 - b_1 - c_1 - d_1$ 
with respect the standard grading on $\kr[x_{[2]\times P}]$ we have $\deg(u_{\emptyset,a}) = -1$.
For any other $u_{q,p}$, we have $\deg(u_{q,p})=1$. 
The space $U^-$ is generated by
$u_{\empt,a}$,
the grading is positive (and so admissible) on 
$\kr[Y/V] = \kr[x_{[2] \times P}, u_{a,b},u_{a,c},u_{b,c},u_{c,b}]$.
Let $J^\prime \sus \kr[Y/U^-]$ be the ideal obtained from $J$ by
setting $u_{\empt,a} = 0$. This is the ideal generated by the 
forms \eqref{eq:ExStarForms} after setting $u_{\empt,a} = 0$. 
Note that $Y^+ = Y \oplus V^+$, and $Y^+/U \iso Y/U^-$ as 
$A$-graded spaces. 
Then there is an open subset of the Hilbert scheme component of 
$(L(2,P)) \sus \kr[Y^+/U]$, such that the ideals in this open
subset come from a coordinate change in $J^\prime \sus \kr[Y/U^-]$.


\medskip
Now consider another $\mathbb{Z}$-grading on $\kr[x_{[2] \times P}]$. 
Let $a_1$ have degree $2$ and all the other variables have degree $1$. 
In this case $\deg(u_{\emptyset,a})=0$ and any homomorphism 
in $T$ maps $u_{\emptyset,a}$ to a scalar.
In this case there is open subset of the Hilbert scheme component of 
$(L(2,P)) \sus \kr[\hY/U]$ such that the ideals in this open subset come from
a coordinate change of $J(2,P) \sus \kr[Y]$, where $Y \iso \hY/U$ 
and $u_{\empt,a}$ will always be sent
to a scalar.
\end{example}

\medskip
\noindent {\bf Conclusion} 
The letterplace ideal $I = L(2,P)$ is usually not rigid, but we see that 
something nearly
as good holds when the Hasse diagram of the poset $P$ has tree structure. 
There is a ``lifting'' to a rigid ideal $J = J(2,P)$, and 
for an open set of the Hilbert scheme component of $L(2,P)$, all the
ideals come from a coordinate change of $J(2,P)$.

We have also done computations investigating simple cases when $P$
does not have tree structure, f.ex. the four element diamond poset.
It seems everything we show in this article also goes through.
We therefore make the following conjecture.

\begin{conjecture} \label{con:HilbCon}
For any finite poset $P$, the letterplace ideal $L(2,P) 
\sus \kr[x_{[2] \times P}]$ deforms
to a rigid ideal $J(2,P)$ in a ring $\kr[x_{[2] \times P}] \te_{\kr} \kr[U]$, 
with the quotient ring by the ideal
flat over a polynomial base ring $\kr[U]$. 
The ring $\kr[x_{[2] \times P}] \te_{\kr} \kr[U]$ is naturally 
positively graded by $\ZZ([2] \times P)$ with $J(2,P)$ homogeneous
for this grading.
\end{conjecture}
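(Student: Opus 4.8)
The plan is to reduce the conjecture to the construction of a single explicit family and the verification of three properties of it, exploiting the fact that the two main engines of the paper are already stated in enough generality. Theorem \ref{thm:Rigid} needs only (i) a positive grading by an abelian group for which the subspace $U$ and the ideal $J$ are homogeneous, and (ii) surjectivity of the composite $U^{*} \rarr \Hom_{\kr[Y/U]}(I, \kr[Y/U]/I) \rarr T^{1}(\kr[Y/U]/I)$; Proposition \ref{pro:GradedFlatlift} reduces flatness over the base to lifting the relations of $L(2,P)$; and $T^{1}(\kr[x_{[2]\times P}]/L(2,P))$ has already been computed for \emph{every} finite poset in Theorem \ref{thm:DefCotang}, with minimal generators indexed by triples $(p,D,U)$ where $D$ is an inclusion-minimal lower bound set for $F(>p)$ inside $P-J(\leq p)$ and $U$ an inclusion-minimal upper bound set for $J(<p)$ inside $P-F(\geq p)$. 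So I would introduce one base variable $u_{(p,D,U)}$ for each such triple --- these generalize the $u_{q,p}$ and $u_{\empt,\rot}$ of Section \ref{sec:Family} --- let $B$ be the polynomial ring in these variables, and put $B(2,P)=B\te_{\kr}\kr[x_{[2]\times P}]$.

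First I would construct $J(2,P)$ by deforming each monomial generator $p_{1}q_{2}$, $p\leq q$. For a general poset the rooted-tree recursion of Section \ref{sec:Family} is unavailable: an element has several covers below it, the ``parents'' of $p$ no longer form a chain, and hence neither the multiplicative factor $R(a,b)$ along a unique chain nor the Laplace expansions of $M(a)$ used in Lemmas \ref{lem:FlatDT} and \ref{lem:FlatSTT} have an obvious analogue. I see two routes. \emph{Route A (obstruction calculus):} take the first-order deformation over $\kr[U]/(U)^{2}$ given by the full generating set of $T^{1}$ from Theorem \ref{thm:DefCotang} and lift it order by order over $\kr[U]$, showing that the obstruction classes in $T^{2}(\kr[x_{[2]\times P}]/L(2,P))$ vanish or can be absorbed; this requires a workable description of $T^{2}$, which the paper does not provide, and is the route I expect to be hardest to make unconditional. \emph{Route B (combinatorial):} guess a closed form for the deformed generators directly. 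The clues are that the multiplicity of $L(2,P)$ is the number of order ideals of $P$ (Section \ref{sec:LP}) and that every worked example --- maximal minors, Pfaffians, ladder determinantal, variety of complexes --- has a clean determinantal or Pfaffian shape; this suggests that $S_{p}(q)$ should become a sum over saturated chains from $p$ to $q$, weighted by products of the $u$-variables along the chain, and that $J(2,P)$ remains (a localization of) a determinantal-type ideal attached to the Hasse diagram. I would extract the correct pattern from the four-element diamond poset --- for which the paper reports the whole story goes through --- before committing to a general formula.

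Given such a $J(2,P)$, I would verify the three properties in turn. The grading of Section \ref{sec:Grad} generalizes directly: give $p_{i}$ the degree $p_{i}\in\ztop$ and $u_{(p,D,U)}$ the degree $p_{1}+p_{2}-\sum_{r\in D}r_{1}-\sum_{s\in U}s_{2}$, which is exactly what makes the deformed generator of $p_{1}p_{2}$ homogeneous; positivity follows as in the paper by setting the $\ZZ$-degree of $p_{2}$ to $1$ and defining that of $p_{1}$ inductively on depth so that it dominates the sum over the covers of $p$. Condition (ii) of Theorem \ref{thm:Rigid} is essentially built in: if $J(2,P)$ is arranged so that its part of degree one in the $u$-variables is $L(2,P)$ together with the span of the $T^{1}$-maps of Theorem \ref{thm:DefCotang}, one only has to check this linear part. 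Flatness is then reduced by Proposition \ref{pro:GradedFlatlift} to lifting the two families of linear syzygies $c_{2}\cdot p_{1}b_{2}-b_{2}\cdot p_{1}c_{2}$ and $b_{1}\cdot a_{1}c_{2}-a_{1}\cdot b_{1}c_{2}$ of $L(2,P)$, that is, to proving the general-poset analogues of Proposition \ref{pro:FlatBasic} ($S_{p}(b)c_{2}-b_{2}S_{p}(c)\in J(2,P)$) and Proposition \ref{pro:FlatP2} ($a_{1}T(b)-T(a)R(a,b)b_{1}\in J(2,P)$); rigidity then follows from Theorem \ref{thm:Rigid} and smoothness on the multigraded Hilbert scheme from Theorem \ref{thm:HilbSmooth}, just as in Corollary \ref{cor:Rigid} and its successors.

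\textbf{The main obstacle.} The crux is the construction of $J(2,P)$ together with the proof that syzygies lift --- the content of Propositions \ref{pro:FlatBasic} and \ref{pro:FlatP2} --- both of which in the tree case rested squarely on the rooted-tree recursion and on the Laplace-type determinantal identities for $M(a)$. Without a unique parent there is no evident recursion and no evident determinantal identity to replace them, so I expect one must first find a more conceptual model: presumably $B(2,P)/J(2,P)$ should be realized as the coordinate ring of a natural ``Hibi-type'' family degenerating to the edge ring of the Cohen--Macaulay bipartite graph, and one would then read off both the defining equations and the relation-lifting from the geometry. Checking the diamond poset in detail, where the paper already reports success, is the natural proving ground for this model, and a likely stepping stone toward the full conjecture.
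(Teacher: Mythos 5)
This statement is a \emph{conjecture}; the paper does not prove it, and in fact the authors only report that ``it seems everything goes through'' in low-dimensional experiments such as the four-element diamond. So there is no paper proof to compare against, and your proposal is, quite correctly, a program rather than an argument. As a program it is sound and well-calibrated: you have correctly identified that Theorem \ref{thm:DefCotang} already computes $T^1$ for \emph{every} finite poset, that the base ring should therefore be generated by variables $u_{(p,D,U)}$ indexed by the triples of that theorem, that the $\ZZ([2]\times P)$-degree of $u_{(p,D,U)}$ must be $p_1+p_2-\sum_{r\in D}r_1-\sum_{s\in U}s_2$ to make the deformed generator of $p_1p_2$ homogeneous (this indeed specializes to the paper's $p_1-q_2+\hat p$ in the tree case), and that Theorem \ref{thm:Rigid}, Proposition \ref{pro:GradedFlatlift} and Theorem \ref{thm:HilbSmooth} are stated in enough generality that, once a flat $J(2,P)$ with the right linear part exists, rigidity and smoothness on the Hilbert scheme follow for free.

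The genuine gap, which you flag yourself, is exactly the crux: neither Route A nor Route B is carried out. Route A would require a usable description of $T^2$ and a vanishing/absorption argument for obstructions, neither of which the paper supplies; the paper's actual proof for trees is an explicit construction, not obstruction calculus, so Route B is closer in spirit. For Route B you would need a closed form for $S_p(q)$ and $T(p)$ without a unique parent, together with analogues of Propositions \ref{pro:FlatBasic} and \ref{pro:FlatP2}; the Laplace-expansion identities of Lemmas \ref{lem:FlatSTT}--\ref{lem:FlatDT} rely on each $M(a)$ being an honest matrix indexed by the \emph{children} of $a$, and in a general poset an element can have several lower covers and the sets $D$ need not be sets of covers, so both the recursion and the determinantal combinatorics must be replaced. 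One more point worth tightening: your positivity argument inherits the paper's inductive scheme (make $d(p_1)$ dominate the sum over covers of $p$), but for general $P$ the sets $D$ entering $\deg u_{(p,D,U)}$ may contain non-covers of $p$, so the inequality that makes $d(u_{(p,D,U)})>0$ is not the same inequality and needs its own (probably still inductive) proof. None of this invalidates your outline, but it means the proposal establishes the reduction, not the conjecture itself.
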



\section{Appendix}

\begin{example}
Let $P$ be a poset with following Hasse diagram.
\begin{center}
\begin{tikzpicture}[scale=.75, vertices/.style={draw, fill=black, circle, inner sep=1.5pt}]
             \node [vertices, label=right:{$a$}] (0) at (-0+0,0){};
             \node [vertices, label=right:{$b$}] (1) at (-1.5+0,1.33333){};
             \node [vertices, label=right:{$c$}] (2) at (-1.5+1.5,1.33333){};
             \node [vertices, label=right:{$d$}] (3) at (-1.5+3,1.33333){};
             \node [vertices, label=right:{$e$}] (4) at (3,2.66667){};
             \node [vertices, label=right:{$f$}] (5) at (1.5,4){};
             \node [vertices, label=right:{$g$}] (6) at (3+1.5,4){};
     \foreach \to/\from in {0/1, 0/2, 0/3, 3/4, 4/5, 4/6}
     \draw [-,thick] (\to)--(\from);
\end{tikzpicture}
\end{center}
We start from the outer branch \begin{tikzpicture}[scale=.5, vertices/.style={draw, fill=black, circle, inner sep=1.5pt}]
             \node [vertices, label=right:{$e$}] (0) at (-0+0,0){};
             \node [vertices, label=right:{$f$}] (1) at (-.75+0,1.33333){};
             \node [vertices, label=right:{$g$}] (2) at (-.75+1.5,1.33333){};
     \foreach \to/\from in {0/1, 0/2}
     \draw [-,thick] (\to)--(\from);
     \end{tikzpicture} and compute the deformed relations.

Since $f$ is a maximal point we have
\[f_1f_2 - T(f) = f_1f_2 - u_{e,f} e_2 - u_{g,f} g_2\]
and similarly for $g$ we get the relation $g_1g_2 - u_{e,g} e_2 - u_{f,g} f_2$.
By definition
\[
M(e) =
\begin{bmatrix}
-u_{e,f} & f_1 & -u_{g,f}\\
-u_{e,g} & -u_{f,g} & g_1
\end{bmatrix}
\]
and
\[
e_1e_2 - T(e) D(e)^e = e_1e_2 -u_{d,e} d_2( f_1g_1-u_{g,f}u_{f,g}).
\]
Similarly we have $e_1f_2 - T(e)D(e)^f =  e_1f_2  - g_1d_2u_{e,f}u_{d,e} - d_2u_{g,f}u_{e,g}u_{d,e}$.
For $d_1f_2$ we have
\[
d_1f_2 - T(d)D(d)^e D(e)^f = d_1f_2 - 
(u_{a,d}a_2 + u_{b,d} b_2 + u_{c,d} c_2)u_{d,e} (g_1u_{e,f}+u_{ g,f}u_{e,g})
\]
For the deformed relation of $a_1f_2$ we need the matrix $M(a)$ for the branch point $a$.
The matrix $M(a)$ is given by
\[
\begin{bmatrix}
-u_{a,b} & b_1 & -u_{c,b} & -(u_{d,b}e_1 + u_{e,b}u_{d,e}D(e)^e+u_{f,b}u_{d,e}D(e)^e+u_{f,b}u_{d,e}D(e)^f+u_{g,b}u_{d,e}D(e)^g)\\
-u_{a,c} & -u_{b,c} & c_1 & -(u_{d,c}e_1 + u_{e,c}u_{d,e}D(e)^e+u_{f,c}u_{d,e}D(e)^e+u_{f,b}u_{d,e}D(e)^f+u_{g,c}u_{d,e}D(e)^g)\\
-u_{a,d} & -u_{b,d} & -u_{c,d} & d_1
\end{bmatrix}
\]
\end{example}
Now we have
\begin{align*}
a_1f_2 - T(a) S_a(f) & = a_1f_2 -u_{\emptyset,a} D(a)^d D(d)^e D(e)^f D(f)^f\\
& = a_1f_2 - u_{\emptyset,a} D(a)^d u_{d,e} (u_{e,g}u_{g,f}+u_{e,f}g_1)
\end{align*}

Analogously, we can compute the remaining deformed relations and get the following flat family.

\begin{footnotesize}
\begin{enumerate}[leftmargin=.5cm]
\item ${a}_{1} {a}_{2}-{b}_{1} {c}_{1} {d}_{1} u_{\emptyset,a} + {b}_{1} {e}_{1} u_{\emptyset,a} u_{d,c} u_{c,d} + {b}_{1} {f}_{1} {g}_{1} u_{\emptyset,a} u_{e,c} u_{c,d} u_{d,e}+{b}_{1} {f}_{1} u_{\emptyset,a} u_{g,c} u_{c,d} u_{d,e} u_{e,g}
+{b}_{1} {g}_{1} u_{\emptyset,a} u_{f,c} u_{c,d} u_{d,e} u_{e,f}-{b}_{1} u_{\emptyset,a} u_{e,c} u_{c,d} u_{d,e} u_{g,f} u_{f,g}+{b}_{1} u_{\emptyset,a} u_{f,c} u_{c,d} u_{d,e} u_{g,f} u_{e,g}+{b}_{1} u_{\emptyset,a} u_{g,c} u_{c,d} u_{d,e} u_{e,f} u_{f,g}+{c}_{1} {e}_{1} u_{\emptyset,a} u_{d,b} u_{b,d}+{c}_{1} {f}_{1} {g}_{1} u_{\emptyset,a} u_{e,b} u_{b,d} u_{d,e}+{c}_{1} {f}_{1} u_{\emptyset,a} u_{g,b} u_{b,d} u_{d,e} u_{e,g}+{c}_{1} {g}_{1} u_{\emptyset,a} u_{f,b} u_{b,d} u_{d,e} u_{e,f}-{c}_{1} u_{\emptyset,a} u_{e,b} u_{b,d} u_{d,e} u_{g,f} u_{f,g}+{c}_{1} u_{\emptyset,a} u_{f,b} u_{b,d} u_{d,e} u_{g,f} u_{e,g}+{c}_{1} u_{\emptyset,a} u_{g,b} u_{b,d} u_{d,e} u_{e,f} u_{f,g}+{d}_{1} u_{\emptyset,a} u_{c,b} u_{b,c}+{e}_{1} u_{\emptyset,a} u_{c,b} u_{d,c} u_{b,d}+{e}_{1} u_{\emptyset,a} u_{d,b} u_{b,c} u_{c,d}+{f}_{1} {g}_{1} u_{\emptyset,a} u_{c,b} u_{e,c} u_{b,d} u_{d,e}+{f}_{1} {g}_{1} u_{\emptyset,a} u_{e,b} u_{b,c} u_{c,d} u_{d,e}+{f}_{1} u_{\emptyset,a} u_{c,b} u_{g,c} u_{b,d} u_{d,e} u_{e,g}+{f}_{1} u_{\emptyset,a} u_{g,b} u_{b,c} u_{c,d} u_{d,e} u_{e,g}+{g}_{1} u_{\emptyset,a} u_{c,b} u_{f,c} u_{b,d} u_{d,e} u_{e,f}+{g}_{1} u_{\emptyset,a} u_{f,b} u_{b,c} u_{c,d} u_{d,e} u_{e,f}-u_{\emptyset,a} u_{c,b} u_{e,c} u_{b,d} u_{d,e} u_{g,f} u_{f,g}+u_{\emptyset,a} u_{c,b} u_{f,c} u_{b,d} u_{d,e} u_{g,f} u_{e,g}+u_{\emptyset,a} u_{c,b} u_{g,c} u_{b,d} u_{d,e} u_{e,f} u_{f,g}-u_{\emptyset,a} u_{e,b} u_{b,c} u_{c,d} u_{d,e} u_{g,f} u_{f,g}+u_{\emptyset,a} u_{f,b} u_{b,c} u_{c,d} u_{d,e} u_{g,f} u_{e,g}+u_{\emptyset,a} u_{g,b} u_{b,c} u_{c,d} u_{d,e} u_{e,f} u_{f,g}$,
\item ${a}_{1} {b}_{2}-{c}_{1} {d}_{1} u_{\emptyset,a} u_{a,b}-{c}_{1} {e}_{1} u_{\emptyset,a} u_{d,b} u_{a,d}-{c}_{1} {f}_{1} {g}_{1} u_{\emptyset,a} u_{e,b} u_{a,d} u_{d,e}-{c}_{1} {f}_{1} u_{\emptyset,a} u_{g,b} u_{a,d} u_{d,e} u_{e,g}-{c}_{1} {g}_{1} u_{\emptyset,a} u_{f,b} u_{a,d} u_{d,e} u_{e,f}+{c}_{1} u_{\emptyset,a} u_{e,b} u_{a,d} u_{d,e} u_{g,f} u_{f,g}-{c}_{1} u_{\emptyset,a} u_{f,b} u_{a,d} u_{d,e} u_{g,f} u_{e,g}-{c}_{1} u_{\emptyset,a} u_{g,b} u_{a,d} u_{d,e} u_{e,f} u_{f,g}-{d}_{1} u_{\emptyset,a} u_{c,b} u_{a,c}+{e}_{1} u_{\emptyset,a} u_{a,b} u_{d,c} u_{c,d}-{e}_{1} u_{\emptyset,a} u_{c,b} u_{d,c} u_{a,d}-{e}_{1} u_{\emptyset,a} u_{d,b} u_{a,c} u_{c,d}+{f}_{1} {g}_{1} u_{\emptyset,a} u_{a,b} u_{e,c} u_{c,d} u_{d,e}-{f}_{1} {g}_{1} u_{\emptyset,a} u_{c,b} u_{e,c} u_{a,d} u_{d,e}-{f}_{1} {g}_{1} u_{\emptyset,a} u_{e,b} u_{a,c} u_{c,d} u_{d,e}+{f}_{1} u_{\emptyset,a} u_{a,b} u_{g,c} u_{c,d} u_{d,e} u_{e,g}-{f}_{1} u_{\emptyset,a} u_{c,b} u_{g,c} u_{a,d} u_{d,e} u_{e,g}-{f}_{1} u_{\emptyset,a} u_{g,b} u_{a,c} u_{c,d} u_{d,e} u_{e,g}+{g}_{1} u_{\emptyset,a} u_{a,b} u_{f,c} u_{c,d} u_{d,e} u_{e,f}-{g}_{1} u_{\emptyset,a} u_{c,b} u_{f,c} u_{a,d} u_{d,e} u_{e,f}-{g}_{1} u_{\emptyset,a} u_{f,b} u_{a,c} u_{c,d} u_{d,e} u_{e,f}-u_{\emptyset,a} u_{a,b} u_{e,c} u_{c,d} u_{d,e} u_{g,f} u_{f,g}+u_{\emptyset,a} u_{a,b} u_{f,c} u_{c,d} u_{d,e} u_{g,f} u_{e,g}+u_{\emptyset,a} u_{a,b} u_{g,c} u_{c,d} u_{d,e} u_{e,f} u_{f,g}+u_{\emptyset,a} u_{c,b} u_{e,c} u_{a,d} u_{d,e} u_{g,f} u_{f,g}-u_{\emptyset,a} u_{c,b} u_{f,c} u_{a,d} u_{d,e} u_{g,f} u_{e,g}-u_{\emptyset,a} u_{c,b} u_{g,c} u_{a,d} u_{d,e} u_{e,f} u_{f,g}+u_{\emptyset,a} u_{e,b} u_{a,c} u_{c,d} u_{d,e} u_{g,f} u_{f,g}-u_{\emptyset,a} u_{f,b} u_{a,c} u_{c,d} u_{d,e} u_{g,f} u_{e,g}-u_{\emptyset,a} u_{g,b} u_{a,c} u_{c,d} u_{d,e} u_{e,f} u_{f,g}$,
\item ${b}_{1} {b}_{2}-{a}_{2} u_{a,b}-{c}_{2} u_{c,b}-{d}_{2} u_{d,b}-{e}_{2} u_{e,b}-{f}_{2} u_{f,b}-{g}_{2} u_{g,b}$,
\item ${a}_{1} {c}_{2}-{b}_{1} {d}_{1} u_{\emptyset,a} u_{a,c}-{b}_{1} {e}_{1} u_{\emptyset,a} u_{d,c} u_{a,d}-{b}_{1} {f}_{1} {g}_{1} u_{\emptyset,a} u_{e,c} u_{a,d} u_{d,e}-{b}_{1} {f}_{1} u_{\emptyset,a} u_{g,c} u_{a,d} u_{d,e} u_{e,g}-{b}_{1} {g}_{1} u_{\emptyset,a} u_{f,c} u_{a,d} u_{d,e} u_{e,f}+{b}_{1} u_{\emptyset,a} u_{e,c} u_{a,d} u_{d,e} u_{g,f} u_{f,g}-{b}_{1} u_{\emptyset,a} u_{f,c} u_{a,d} u_{d,e} u_{g,f} u_{e,g}-{b}_{1} u_{\emptyset,a} u_{g,c} u_{a,d} u_{d,e} u_{e,f} u_{f,g}-{d}_{1} u_{\emptyset,a} u_{a,b} u_{b,c}-{e}_{1} u_{\emptyset,a} u_{a,b} u_{d,c} u_{b,d}+{e}_{1} u_{\emptyset,a} u_{d,b} u_{a,c} u_{b,d}-{e}_{1} u_{\emptyset,a} u_{d,b} u_{b,c} u_{a,d}-{f}_{1} {g}_{1} u_{\emptyset,a} u_{a,b} u_{e,c} u_{b,d} u_{d,e}+{f}_{1} {g}_{1} u_{\emptyset,a} u_{e,b} u_{a,c} u_{b,d} u_{d,e}-{f}_{1} {g}_{1} u_{\emptyset,a} u_{e,b} u_{b,c} u_{a,d} u_{d,e}-{f}_{1} u_{\emptyset,a} u_{a,b} u_{g,c} u_{b,d} u_{d,e} u_{e,g}+{f}_{1} u_{\emptyset,a} u_{g,b} u_{a,c} u_{b,d} u_{d,e} u_{e,g}-{f}_{1} u_{\emptyset,a} u_{g,b} u_{b,c} u_{a,d} u_{d,e} u_{e,g}-{g}_{1} u_{\emptyset,a} u_{a,b} u_{f,c} u_{b,d} u_{d,e} u_{e,f}+{g}_{1} u_{\emptyset,a} u_{f,b} u_{a,c} u_{b,d} u_{d,e} u_{e,f}-{g}_{1} u_{\emptyset,a} u_{f,b} u_{b,c} u_{a,d} u_{d,e} u_{e,f}+u_{\emptyset,a} u_{a,b} u_{e,c} u_{b,d} u_{d,e} u_{g,f} u_{f,g}-u_{\emptyset,a} u_{a,b} u_{f,c} u_{b,d} u_{d,e} u_{g,f} u_{e,g}-u_{\emptyset,a} u_{a,b} u_{g,c} u_{b,d} u_{d,e} u_{e,f} u_{f,g}-u_{\emptyset,a} u_{e,b} u_{a,c} u_{b,d} u_{d,e} u_{g,f} u_{f,g}+u_{\emptyset,a} u_{e,b} u_{b,c} u_{a,d} u_{d,e} u_{g,f} u_{f,g}+u_{\emptyset,a} u_{f,b} u_{a,c} u_{b,d} u_{d,e} u_{g,f} u_{e,g}-u_{\emptyset,a} u_{f,b} u_{b,c} u_{a,d} u_{d,e} u_{g,f} u_{e,g}+u_{\emptyset,a} u_{g,b} u_{a,c} u_{b,d} u_{d,e} u_{e,f} u_{f,g}-u_{\emptyset,a} u_{g,b} u_{b,c} u_{a,d} u_{d,e} u_{e,f} u_{f,g}$,
\item ${c}_{1} {c}_{2}-{a}_{2} u_{a,c}-{b}_{2} u_{b,c}-{d}_{2} u_{d,c}-{e}_{2} u_{e,c}-{f}_{2} u_{f,c}-{g}_{2} u_{g,c}$,
\item ${a}_{1} {d}_{2}-{b}_{1} {c}_{1} {e}_{1} u_{\emptyset,a} u_{a,d}-{b}_{1} {e}_{1} u_{\emptyset,a} u_{a,c} u_{c,d}-{c}_{1} {e}_{1} u_{\emptyset,a} u_{a,b} u_{b,d}-{e}_{1} u_{\emptyset,a} u_{a,b} u_{b,c} u_{c,d}-{e}_{1} u_{\emptyset,a} u_{c,b} u_{a,c} u_{b,d}+{e}_{1} u_{\emptyset,a} u_{c,b} u_{b,c} u_{a,d}$,
\item ${d}_{1} {d}_{2}-{e}_{1} {a}_{2} u_{a,d}-{e}_{1} {b}_{2} u_{b,d}-{e}_{1} {c}_{2} u_{c,d}$,
\item ${a}_{1} {e}_{2}-{b}_{1} {c}_{1} {f}_{1} {g}_{1} u_{\emptyset,a} u_{a,d} u_{d,e}+{b}_{1} {c}_{1} u_{\emptyset,a} u_{a,d} u_{d,e} u_{g,f} u_{f,g}-{b}_{1} {f}_{1} {g}_{1} u_{\emptyset,a} u_{a,c} u_{c,d} u_{d,e}+{b}_{1} u_{\emptyset,a} u_{a,c} u_{c,d} u_{d,e} u_{g,f} u_{f,g}-{c}_{1} {f}_{1} {g}_{1} u_{\emptyset,a} u_{a,b} u_{b,d} u_{d,e}+{c}_{1} u_{\emptyset,a} u_{a,b} u_{b,d} u_{d,e} u_{g,f} u_{f,g}-{f}_{1} {g}_{1} u_{\emptyset,a} u_{a,b} u_{b,c} u_{c,d} u_{d,e}-{f}_{1} {g}_{1} u_{\emptyset,a} u_{c,b} u_{a,c} u_{b,d} u_{d,e}+{f}_{1} {g}_{1} u_{\emptyset,a} u_{c,b} u_{b,c} u_{a,d} u_{d,e}+u_{\emptyset,a} u_{a,b} u_{b,c} u_{c,d} u_{d,e} u_{g,f} u_{f,g}+u_{\emptyset,a} u_{c,b} u_{a,c} u_{b,d} u_{d,e} u_{g,f} u_{f,g}-u_{\emptyset,a} u_{c,b} u_{b,c} u_{a,d} u_{d,e} u_{g,f} u_{f,g}$,
\item ${d}_{1} {e}_{2}-{f}_{1} {g}_{1} {a}_{2} u_{a,d} u_{d,e}-{f}_{1} {g}_{1} {b}_{2} u_{b,d} u_{d,e}-{f}_{1} {g}_{1} {c}_{2} u_{c,d} u_{d,e}+{a}_{2} u_{a,d} u_{d,e} u_{g,f} u_{f,g}+{b}_{2} u_{b,d} u_{d,e} u_{g,f} u_{f,g}+{c}_{2} u_{c,d} u_{d,e} u_{g,f} u_{f,g}$,
\item ${e}_{1} {e}_{2}-{f}_{1} {g}_{1} {d}_{2} u_{d,e}+{d}_{2} u_{d,e} u_{g,f} u_{f,g}$,
\item ${a}_{1} {f}_{2}-{b}_{1} {c}_{1} {g}_{1} u_{\emptyset,a} u_{a,d} u_{d,e} u_{e,f}-{b}_{1} {c}_{1} u_{\emptyset,a} u_{a,d} u_{d,e} u_{g,f} u_{e,g}-{b}_{1} {g}_{1} u_{\emptyset,a} u_{a,c} u_{c,d} u_{d,e} u_{e,f}-{b}_{1} u_{\emptyset,a} u_{a,c} u_{c,d} u_{d,e} u_{g,f} u_{e,g}-{c}_{1} {g}_{1} u_{\emptyset,a} u_{a,b} u_{b,d} u_{d,e} u_{e,f}-{c}_{1} u_{\emptyset,a} u_{a,b} u_{b,d} u_{d,e} u_{g,f} u_{e,g}-{g}_{1} u_{\emptyset,a} u_{a,b} u_{b,c} u_{c,d} u_{d,e} u_{e,f}-{g}_{1} u_{\emptyset,a} u_{c,b} u_{a,c} u_{b,d} u_{d,e} u_{e,f}+{g}_{1} u_{\emptyset,a} u_{c,b} u_{b,c} u_{a,d} u_{d,e} u_{e,f}-u_{\emptyset,a} u_{a,b} u_{b,c} u_{c,d} u_{d,e} u_{g,f} u_{e,g}-u_{\emptyset,a} u_{c,b} u_{a,c} u_{b,d} u_{d,e} u_{g,f} u_{e,g}+u_{\emptyset,a} u_{c,b} u_{b,c} u_{a,d} u_{d,e} u_{g,f} u_{e,g}$,
\item ${d}_{1} {f}_{2}-{g}_{1} {a}_{2} u_{a,d} u_{d,e} u_{e,f}-{g}_{1} {b}_{2} u_{b,d} u_{d,e} u_{e,f}-{g}_{1} {c}_{2} u_{c,d} u_{d,e} u_{e,f}-{a}_{2} u_{a,d} u_{d,e} u_{g,f} u_{e,g}-{b}_{2} u_{b,d} u_{d,e} u_{g,f} u_{e,g}-{c}_{2} u_{c,d} u_{d,e} u_{g,f} u_{e,g}$,
\item ${e}_{1} {f}_{2}-{g}_{1} {d}_{2} u_{d,e} u_{e,f}-{d}_{2} u_{d,e} u_{g,f} u_{e,g}$,
\item ${f}_{1} {f}_{2}-{e}_{2} u_{e,f}-{g}_{2} u_{g,f}$,
\item ${a}_{1} {g}_{2}-{b}_{1} {c}_{1} {f}_{1} u_{\emptyset,a} u_{a,d} u_{d,e} u_{e,g}-{b}_{1} {c}_{1} u_{\emptyset,a} u_{a,d} u_{d,e} u_{e,f} u_{f,g}-{b}_{1} {f}_{1} u_{\emptyset,a} u_{a,c} u_{c,d} u_{d,e} u_{e,g}-{b}_{1} u_{\emptyset,a} u_{a,c} u_{c,d} u_{d,e} u_{e,f} u_{f,g}-{c}_{1} {f}_{1} u_{\emptyset,a} u_{a,b} u_{b,d} u_{d,e} u_{e,g}-{c}_{1} u_{\emptyset,a} u_{a,b} u_{b,d} u_{d,e} u_{e,f} u_{f,g}-{f}_{1} u_{\emptyset,a} u_{a,b} u_{b,c} u_{c,d} u_{d,e} u_{e,g}-{f}_{1} u_{\emptyset,a} u_{c,b} u_{a,c} u_{b,d} u_{d,e} u_{e,g}+{f}_{1} u_{\emptyset,a} u_{c,b} u_{b,c} u_{a,d} u_{d,e} u_{e,g}-u_{\emptyset,a} u_{a,b} u_{b,c} u_{c,d} u_{d,e} u_{e,f} u_{f,g}-u_{\emptyset,a} u_{c,b} u_{a,c} u_{b,d} u_{d,e} u_{e,f} u_{f,g}+u_{\emptyset,a} u_{c,b} u_{b,c} u_{a,d} u_{d,e} u_{e,f} u_{f,g}$,
\item ${d}_{1} {g}_{2}-{f}_{1} {a}_{2} u_{a,d} u_{d,e} u_{e,g}-{f}_{1} {b}_{2} u_{b,d} u_{d,e} u_{e,g}-{f}_{1} {c}_{2} u_{c,d} u_{d,e} u_{e,g}-{a}_{2} u_{a,d} u_{d,e} u_{e,f} u_{f,g}-{b}_{2} u_{b,d} u_{d,e} u_{e,f} u_{f,g}-{c}_{2} u_{c,d} u_{d,e} u_{e,f} u_{f,g}$,
\item ${e}_{1} {g}_{2}-{f}_{1} {d}_{2} u_{d,e} u_{e,g}-{d}_{2} u_{d,e} u_{e,f} u_{f,g}$,
\item ${g}_{1} {g}_{2}-{e}_{2} u_{e,g}-{f}_{2} u_{f,g}$,
\end{enumerate}
\end{footnotesize}

\bibliographystyle{amsplain}
\bibliography{BibliographyA}

\end{document}